
\documentclass[reqno,11pt]{imsart}
\usepackage[utf8]{inputenc}
\usepackage{amsmath,amsfonts,amssymb,amsthm,epsfig,epstopdf,titling,url,array}
\usepackage{blindtext}
\usepackage{multirow}
\usepackage{algorithm}
\usepackage{algpseudocode}
\usepackage{caption}
\usepackage{subcaption}
\usepackage{graphicx}
\usepackage{verbatim}
\usepackage{xcolor}
\usepackage[numbers]{natbib}
\usepackage[capitalize,noabbrev]{cleveref}
\usepackage{mathtools}

\theoremstyle{plain}
\newtheorem{thm}{Theorem}[section]

\newtheorem{cor}[thm]{Corollary}

\theoremstyle{definition}
\newtheorem{defn}{Definition}[section]

\newtheorem{assumption}{Assumption}[section]

\theoremstyle{remark}
\newtheorem{rem}{Remark}

\usepackage{xcolor}

\def\ilsang{\textcolor{blue}}

\begin{document}

\title{Variational bagging: a robust  approach for Bayesian uncertainty quantification}

\runtitle{Variational bagging}

\author{Shitao Fan, Ilsang Ohn, David Dunson and Lizhen Lin}

\runauthor{Fan, Ohn, Dunson and Lin}
%\email{lizhen01@umd.edu}
\date{}

\begin{abstract}
Variational Bayes methods are popular due to computational efficiency and adaptivity to diverse applications. In specifying the variational family, mean-field classes are commonly chosen, which enables efficient algorithms such as coordinate ascent variational inference (CAVI), but fails to capture parameter dependence and typically underestimates uncertainty. In this work, we introduce a variational bagging approach that integrates a bagging procedure with variational Bayes, resulting in a \emph{bagged variational posterior} for improved inference. We establish strong theoretical guarantees, including posterior contraction rates for general models and a Bernstein–von Mises (BVM)-type theorem that ensures valid uncertainty quantification. Notably, our results show that even when using a mean-field variational family, our approach can recover off-diagonal elements of the limiting covariance structure and, crucially, provide proper uncertainty quantification.  In addition, variational bagging is robust to model misspecification, with the covariance structures matching  that of the target covariance.  
We illustrate  our variational bagging
in numerical studies through  applications to parametric models,  finite mixture models, deep neural networks, and variational autoencoders (VAEs).

\textbf{Keywords:} Bagging;  %Bernstein-von Mises theorem;
Bagged VAE; Deep neural networks;  Model misspecification; Posterior contraction rates;  Robustness; Uncertainty quantification; Variational Bayes.
\end{abstract}
\maketitle

\section{Introduction}

Variational Bayes has emerged as a powerful framework for scalable Bayesian inference by approximating posterior distributions in complex and high-dimensional models with simpler variational families bypassing the need for MCMC sampling. There have been wide applications including for topic modeling \cite{vb-lda},  graphical models \cite{vb-graphical, vb-dag}, Bayesian nonparametric modeling \citep{vb-dpmixture}, and high-dimensional sparse models \cite{Ray03072022}. Variational Bayes also plays an important role in modern generative AI through the variational autoencoder (VAE) \citep{VAE-kingma-welling, chae2021likelihood}. 
In specifying the variational family, a commonly used approximation within this framework is the mean-field family, which simplifies computation by assuming independence among model parameters. Although this assumption facilitates efficient algorithms like coordinate ascent variational inference (CAVI), it can lead to poor uncertainty quantification by ignoring parameter dependencies and systematically underestimating posterior variance. Although there have been attempts to mitigate problems with mean-field approaches \cite{Giordano2017CovariancesRA, Vakili2022ImprovedCR, Katsevich2023OnTA, 10.1214/23-EJS2155}, most existing approaches are tailored to specific model classes, for example Gaussian or sparse Gaussian process models.

In this work, we propose \emph{variational bagging}, a method that combines variational Bayes with a bootstrap aggregation (bagging) procedure to produce a bagged variational posterior with improved inferential properties, especially uncertainty quantification. We will show that variational bagging, which yields a \emph{bagged variational posterior}, comes with strong theoretical guarantees, including posterior contraction rates for general models and a Bernstein–von Mises (BvM)-type result that ensures valid uncertainty quantification asymptotically. Remarkably, even when restricting to a mean-field variational family, our method can recover aspects of the full covariance structure, including off-diagonal elements.

%In addition to the need to mitigate or resolve the issues of invalid uncertainty quantification of variational Bayes, there is also a concern for model misspecification. 
Beyond the well-known undercoverage of variational Bayes, there is a broader concern about robustness to model misspecification. 
 Bayesian statistics, including variational Bayes,  is a model-based approach, 
 %which combines prior information with the  likelihood of the available data to yield a posterior measure for inference. 
 which comes with the implicit assumption that the likelihood is correctly specified. In reality, however, knowledge about the true model or model class is rarely given, and model misspecification is more often than not. For many model classes, the posterior distribution (or their variational approximations) and our corresponding inferences and predictions are sensitive to model misspecification. As sample size increases in the misspecified case, the posterior typically concentrates around the pseudo-true parameter value corresponding to the minimal Kullback-Leibler (KL) divergence from the true data-generating model. \citet{kleijn2012bernstein} provide a Bernstein von Mises (BvM) theory characterizing the limiting form of the posterior under misspecification, showing that in general Bayesian credible sets are not valid confidence sets when the model is misspecified even asymptotically. We will show that variational bagging is robust to model misspecification, and yield a covariance that matches the target covariance. 
 
 %inheriting similar properties from BayesBag \cite{huggins2019robust}. 

`BayesBag' is a simple and general approach  to obtain a robustified posterior by averaging over posteriors defined conditionally on different bootstrap-replicated datasets \cite{buhlmann14, huggins2019robust}. This is an application of the  bootstrap aggregation (bagging) approach of \cite{breiman96}. Let $X_{1:M}^*=(X_1^*,\ldots,X_M^*)$ be a bootstrapped sample  of size $M$ from the  data $X=(X_1,\ldots,X_n)$ with sample size $n$.  Let $\pi(\theta|X^*)$ be the posterior distribution conditioned on the bootstrapped copy of the data.  The bagged posterior \cite{huggins2019robust} is defined as     \begin{align}
    \label{eq-intro-bag}
    \tilde{\pi}(\theta|X_{1:n})=\frac{1}{n^M}\sum_{X_{1:M}^*}\pi(\theta|X_{1:M}^*),
   \end{align}
which is defined over all possible bootstrap datasets. 
To reduce computational burden, we may use $B$ bootstrap samples, with $B \ll n^M$, to obtain
    \begin{align*}
    %\label{eq-intro-bag2}
    \tilde{\pi}(\theta|X_{1:n})\approx\frac{1}{B}\sum_{b=1}^B \pi\left(\theta|X_{(b)}^*\right),
    \end{align*} 
where each $X_{(b)}^*$ is a bootstrap sample of size $M$. \citet{huggins2019robust} studied asymptotic properties of the bagged posterior, including a BvM type theorem, and showed good predictive and uncertainty quantification (UQ) performance under model misspecification in simulation studies. 

One potential drawback with BayesBag is its computation cost in the typical case in which $\pi\left(\theta|X_{1:M}^*\right)$ is intractable. Running MCMC sampling for each bootstrap replicate is often infeasible. We propose a \emph{variational bagging approach} by first providing a variational approximation of each $\pi\left(\theta|X_{1:M}^*\right)$, the collection of which is then aggregated to produce the final \emph{bagged variational posterior distribution} for inference.  Compared to BayesBag,  variational bagging massively speeds up the computation while yielding new and interesting theoretical results
that rely on the unique properties of variational Bayes.

%focus on using variational approximations to massively speed up computation for each $\pi\left(\theta|X_{1:M}^*\right)$. 
To summarize, we outline our theoretical results: 
\begin{itemize}
    \item[1.] First, although variational approximations are well known to underestimate posterior variance in many cases, we provide a BvM theorem showing that bootstrap aggregation not only accommodates model misspecification but also appropriately inflates the variance so that the bagged posterior has theoretical guarantees of accurate uncertainty quantification. Remarkably,  the limiting covariance structure (with simple adjustment) matches the target covariance even if the popular mean-field variational family is adopted. When variational inference is not conducted over the latent variable $Z$, the covariance  coincides with the asymptotic variance of the MLE under model
misspecification. 
\item[2.] Second, we provide theoretical results on posterior contraction rates of the resulting bagged VB posterior and conditions on when such rates are minimax optimal.  Our results encompass complex and nonparametric models and required the development of new technical tools. 

\item[3.] Finally, we show that bagged variational posterior distributions lack overconfident credible sets.
\end{itemize}

From the practical side, a major advantage of variational bagging is in providing a natural mechanism for better approximating complex multimodal posteriors by combining many local variational approximations from different bootstrapped data.  In many modern complex models, ranging from intricate mixture models to deep neural networks, MCMC algorithms  can fail to adequately explore the complex and multi-modal posterior landscape, while simple variational approximations without bagging only capture local features of this landscape.  The variational bagging procedure can naturally overcome the above issues and we demonstrate this in a rich variety of examples including finite mixture models, deep neural networks, and variational auto-encoders.

%Such issues have created a fundamental barrier to practical Bayesian inferences in many model classes.

%A major practical advantage of variational bagging is in providing a mechanism for better approximating complex multimodal posteriors by combining many local variational approximations. We demonstrate these advantages in a rich variety of challenging posterior computation cases including stochastic block models, finite mixture models, deep neural networks and variational auto-encoders.

The paper is organized as follows. %To motivate our approach, in the remaining of Section 1, we demonstrate the uncertainty quantification of the VB bagging with a simple Gaussian example.  
Section \ref{sec-prelim} introduces notation and background on MLE, Bayes and variational Bayes under model misspecification. In \cref{sec-vbag}, we describe the variational bagging algorithm in general and provide associated theory, while discussing practical aspects. \cref{sec-appmodel} shows applications of variational bagging to a variety of models, providing algorithmic details and theory support in these specific contexts.
Section \ref{sec-sim} provides an empirical evaluation and demonstration through a simulation study and real data analyses.

\section{Preliminaries}
\label{sec-prelim}
\subsection{Model setup and notation}

We consider a probability triple $(\mathcal{X}, \mathcal{F}, P_0)$ with corresponding density $p_0$. Let $X_{1:n} = (X_1,\ldots, X_n)$ be an i.i.d.\ sample from $P_0$. We model the data using a parametric family $\mathcal{P}_{\Theta} = \{ P_{\theta} : \theta \in \Theta \}$, where each $P_{\theta}$ has density $p(\cdot \mid \theta)$ indexed by a parameter $\theta$.

The true distribution $P_0$ is not assumed to belong to the parametric family $\mathcal{P}_{\Theta}$; that is, we explicitly allow for model misspecification. We denote by $\theta_0$ the pseudo-true parameter, defined as the minimizer of the Kullback--Leibler (KL) divergence:
\begin{align*}
    \theta_0 = \arg\min_{\theta \in \Theta} \textup{KL}(P_{\theta}, P_0),
\end{align*}
where $\textup{KL}(\cdot,\cdot)$ denotes the KL divergence
\[
    \textup{KL}(P_1, P_2) = \int \log\!\left(\frac{dP_1}{dP_2}\right) dP_1,
\]
whenever $P_1$ is absolutely continuous with respect to $P_2$. When the corresponding densities $p_1$ and $p_2$ exist, we write $\textup{KL}(p_1, p_2)$ for the induced KL divergence. When the model is correctly specified, we have $P_{\theta_0} = P_0$.

The maximum likelihood estimator (MLE),
\[
    \hat{\theta}_{\mathrm{mle}} = \arg\max_{\theta \in \Theta} \sum_{i=1}^n \log p(X_i \mid \theta),
\]
is asymptotically centered at $\theta_0$ and, under standard regularity conditions, satisfies
\begin{align*}
    \sqrt{n}\bigl(\hat{\theta}_{\mathrm{mle}} - \theta_0\bigr) 
    \stackrel{d}{\to} N\bigl(0,\, V(\theta_0)^{-1} D(\theta_0) V(\theta_0)^{-1}\bigr),
\end{align*}
where
\begin{align*}
    V(\theta) &= - E_{P_0}\bigl[ \nabla^2 \log p(X \mid \theta) \bigr], \\
    D(\theta) &= E_{P_0}\bigl[ \nabla \log p(X \mid \theta)\,\nabla \log p(X \mid \theta)^\top \bigr].
\end{align*}
When the model is correctly specified, the above ``sandwich'' covariance reduces to the inverse Fisher information matrix. Under misspecification, any unbiased estimator $\hat{\theta}$ of $\theta_0$ satisfies
\begin{equation*}
    \mathrm{Var}(\hat{\theta}) 
    \;\ge\; V(\theta_0)^{-1} D(\theta_0) V(\theta_0)^{-1},
\end{equation*}
so that the MLE is asymptotically efficient in the usual sandwich sense.

\subsection{Bernstein--von Mises (BvM) theorem for a Bayesian model}

We first introduce a \emph{local asymptotic normality (LAN)} condition for misspecified models, which will be useful in our later discussions. A model is said to satisfy a stochastic LAN condition around $\theta_0 \in \Theta$ relative to a rate $\delta_n \to 0$ if there exists a random vector $\Delta_{n,\theta_0}$, bounded in $P_0$-probability, such that for every compact set $K \subset \mathbb{R}^d$,
\begin{equation*}\label{eq-3}
\sup_{h \in K}
\left|
\log \prod_{i=1}^n \frac{p(X_i \mid \theta_0 + \delta_n h)}{p(X_i \mid \theta_0)}
- h^\top V(\theta_0)\Delta_{n,\theta_0}
- \frac{1}{2} h^\top V(\theta_0) h
\right|
\overset{P_0}{\longrightarrow} 0.
\end{equation*}

Under the above stochastic LAN condition, \citet{kleijn2012bernstein} prove an asymptotic normality result for the posterior distribution. Writing $\vartheta \sim \pi(\theta \mid X_{1:n})$ for a draw from the posterior, one has
\begin{align*}
    \sqrt{n}(\vartheta - \theta_0) - \Delta_{n,\theta_0}
    \stackrel{d}{\longrightarrow} N\bigl(0, V(\theta_0)^{-1}\bigr).
\end{align*}
%\textcolor{red}{Shitao: I don't think we need converge in distribution with respect to $P_0$ in probability} in $P_0$-probability. 
For a misspecified model, the covariance of the limiting normal distribution differs from the sandwich covariance $V(\theta_0)^{-1} D(\theta_0) V(\theta_0)^{-1}$ introduced above. This discrepancy implies that, under misspecification, credible sets derived from the usual Bayesian posterior are in general not expected to have asymptotically valid frequentist coverage.

%For i.i.d data, under possible misspecified model, $\delta_n$ is typically $1/\sqrt{n}$,  $V_0$ is the limits of $\delta^2_nV(\theta_0)$ as $n \rightarrow \infty$ and $\Delta_{n,\theta_0}=n^{-1/2}V_{0}^{-1}\sum_{i=1}^n \nabla\log p(X_i|\theta_0)$ which has mean 0 and variance $V_0^{-1}D_0V_0^{-1}$, the so-called sandwich covariance.  Here  $D_0$ is the limits of $\delta_n^2D(\theta_0)$ \\

\subsection{Variational Bayes and asymptotic properties}
\label{sec-vb}

Variational Bayes approximates the posterior distribution by a member of a prespecified parametric family (the variational family) by minimizing the KL divergence between the posterior distribution and distributions in this family \citep{blei-reivew}. Variational Bayes has become one of the most popular approaches for posterior approximation due to its simplicity, generality, and computational efficiency. There is also an emerging literature providing theoretical guarantees for variational methods \citep{zhang2020convergence, pati2018statistical, alquier2020concentration, ohn2024adaptive}. 

We consider a setting in which the unknowns consist of latent variables
$Z_{1:n} = (Z_1,\ldots, Z_n)$
and a global parameter $\theta = (\theta_1,\ldots, \theta_d)$. 
Variational Bayes aims to approximate the joint posterior distribution $\pi(\theta, Z_{1:n} \mid X_{1:n})$ by solving the optimization problem
\begin{equation*}
    q^*(\theta, Z_{1:n})
    = \arg \min_{q \in \mathcal{Q}} \textup{KL}\bigl(q, \pi(\cdot \mid X_{1:n})\bigr),
\end{equation*}
where $\mathcal{Q}$ denotes the variational family. In this paper, we focus on the mean-field variational family
\begin{align}
\label{eq-MF-family}
    \mathcal{Q} 
    = \biggl\{
        q : 
        q(\theta, Z_{1:n})
        = \prod_{j=1}^d q_{\theta_j}(\theta_j) \prod_{i=1}^n q_{Z_i}(Z_i)
      \biggr\}.
\end{align}

Recently, frequentist asymptotic properties of variational Bayes approximations have been established, including consistency, contraction rates, and BVM
\citep{zhang2020convergence, pati2018statistical, yang2020alpha, alquier2020concentration, wang2019frequentist,wang2019variational}.  We briefly review the relevant results. 

\begin{defn}
The \emph{variational log-likelihood} is defined as
\begin{equation}
\label{eq-vbmle}
    \log p_{\textsc{vb}}(X \mid \theta)
    = \max_{q_Z}
      E_{q_Z}
      \bigl[
        \log p(X, Z \mid \theta)
        - \log q_Z(Z)
      \bigr].
\end{equation}
\end{defn}

\citet{wang2019variational} introduced a LAN-type condition for the variational log-likelihood. A model is said to satisfy a stochastic \emph{variational local asymptotic normality (VLAN)} condition around an interior point $\theta_0 \in \Theta$ relative to a rate $\delta_n \rightarrow 0$ if the following holds: there exists a random vector $\Delta_{n,\theta_0}$, bounded in $P_0$-probability, such that for every compact set $K \subset \mathbb{R}^d$,
\begin{equation*}
\sup_{h \in K}
\left|
\log \prod_{i=1}^n
\frac{p_{\textsc{vb}}(X_i \mid \theta_0 + \delta_n h)}
     {p_{\textsc{vb}}(X_i \mid \theta_0)}
- h^\top V_{\textsc{vb}}(\theta_0)\Delta_{n,\theta_0}
- \frac{1}{2} h^\top V_{\textsc{vb}}(\theta_0) h
\right|
\overset{P_0}{\longrightarrow} 0,
\end{equation*}
where
\begin{align}
\label{eq:vb_hessian}
    V_{\textsc{vb}}(\theta) 
    &= - E_{P_0}\bigl[\nabla^2 \log p_{\textsc{vb}}(X \mid \theta)\bigr],\\
\label{eq:vb_score_sq}
    D_{\textsc{vb}}(\theta)
    &= E_{P_0}\bigl[
        \nabla \log p_{\textsc{vb}}(X \mid \theta)\,
        \nabla \log p_{\textsc{vb}}(X \mid \theta)^\top
      \bigr].
\end{align}

Under the VLAN condition (together with additional regularity conditions), \citet{wang2019variational} show that the limiting distribution of $\theta$ under the variational posterior is Gaussian:
\begin{align}
\label{eq:bvm_vp}
    \sqrt{n}(\vartheta - \theta_0) - \Delta_{n,\theta_0}
    \stackrel{d}{\longrightarrow}
    N\bigl(0, (\tilde{V}_{\textsc{vb}}^0)^{-1}\bigr),
\end{align}
for $\vartheta \sim \int q^*(\theta, Z)\,dZ$, where $\tilde{V}_{\textsc{vb}}^0$ is the diagonal matrix that has the same diagonal entries as $V_{\textsc{vb}}(\theta_0)$. As is well-known and as shown in \eqref{eq:bvm_vp}, the VB covariance with a mean-field class is only diagonal.

\subsection{Asymptotic properties of BayesBag under model misspecification}

\citet{huggins2019robust} show that, under suitable regularity conditions on the log density $\log p(x \mid \theta)$, the BayesBag posterior $\tilde{\pi}(\theta \mid X_{1:n})$ in \eqref{eq-intro-bag} satisfies the following asymptotic normality:
for $\vartheta \sim \tilde{\pi}(\theta \mid X_{1:n})$,
\begin{align*}
    \sqrt{n}(\vartheta - \theta_0) - \Delta_n \,\big|\, X_{1:n}
    \overset{d}{\longrightarrow}
    N\!\left(
        0,\,
        \frac{1}{c} V(\theta_0)^{-1}
        + \frac{1}{c} V(\theta_0)^{-1} D(\theta_0) V(\theta_0)^{-1}
    \right),
\end{align*}
for some random vector $\Delta_n$ that is bounded in $P_0$-probability, where $c = \lim_{n \to \infty} M/n$ and $M$ is the bootstrap sample size.

In contrast to the original Bayesian posterior, whose limiting covariance under misspecification is $V(\theta_0)^{-1}$ (see BvM result in Section~\ref{sec-prelim}), the BayesBag posterior has asymptotic covariance
$
    \frac{1}{c} V(\theta_0)^{-1}
    + \frac{1}{c} V(\theta_0)^{-1} D(\theta_0) V(\theta_0)^{-1},
$
which is closer to the sandwich form $V(\theta_0)^{-1} D(\theta_0) V(\theta_0)^{-1}$ and therefore yields better-calibrated credible sets under model misspecification. In particular, taking $c = 1$ already leads to conservative uncertainty quantification. Moreover, as discussed in Section~\ref{sec:boot_size}, one can select $c$ in a principled way so that the resulting covariance approximates the sandwich form $V(\theta_0)^{-1} D(\theta_0) V(\theta_0)^{-1}$.

\section{Variational bagging: bagged variational posterior}
\label{sec-vbag}

In this section, we introduce our \emph{variational bagging} approach. For a bootstrap sample 
$X_{1:M}^* = (X_1^*, \ldots, X_M^*)$ generated from the original data 
$X_{1:n} = (X_1, \ldots, X_n)$, define
\begin{align*}
    q^*(\theta, Z_{1:M}^* \mid X_{1:M}^*)
    = \arg\min_{q \in \mathcal{Q}}
    \textup{KL}\bigl(q(\theta, Z_{1:M}^*),\, \pi(\theta, Z_{1:M}^* \mid X_{1:M}^*)\bigr)
\end{align*}
as the joint variational approximation to the posterior given this bootstrap dataset.
We focus on inference for $\theta$ via the marginal
\begin{align*}
    q^*(\theta \mid X_{1:M}^*)
    = \int q^*(\theta, Z_{1:M}^* \mid X_{1:M}^*) \, dZ_{1:M}^*.
\end{align*}
When $\mathcal{Q}$ is a mean-field family, $q^*(\theta \mid X_{1:M}^*)$ is easily obtained using the factorization structure.

We define the \emph{bagged variational posterior} as the average of the variational posteriors obtained from $B$ bootstrap samples:
\begin{equation*}
    q^{\textup{bvB}}(\theta \mid X_{1:n})
    = \frac{1}{B} \sum_{b=1}^B q^*(\theta \mid X_{(b)}^*),
\end{equation*}
where each $X_{(b)}^*$ denotes a bootstrap sample of size $M$. In our theoretical analysis of the bagged variational posterior, we focus on the mean-field variational family.

\subsection{Robust uncertainty quantification}

In this section, we derive the asymptotic distribution of the bagged variational posterior and discuss implications for uncertainty quantification. Recall the definitions of
$V_{\textsc{vb}}(\cdot)$ and $D_{\textsc{vb}}(\cdot)$ from \labelcref{eq:vb_hessian,eq:vb_score_sq}, that is,
\begin{align*}
    V_{\textsc{vb}}(\theta)
    &= -E_{P_0}\bigl[\nabla^2 \log p_{\textsc{vb}}(X \mid \theta)\bigr],\\
    D_{\textsc{vb}}(\theta)
    &= E_{P_0}\bigl[
        \nabla \log p_{\textsc{vb}}(X \mid \theta)\,
        \nabla \log p_{\textsc{vb}}(X \mid \theta)^\top
      \bigr].
\end{align*}

Notably, we show that under model misspecification, the asymptotic covariance of the bagged variational posterior contains an additional “sandwich” term
\begin{align*}
    (V_{\textsc{vb}}^0)^{-1} D_{\textsc{vb}}^0 (V_{\textsc{vb}}^0)^{-1},
    \quad
    V_{\textsc{vb}}^0 = V_{\textsc{vb}}(\theta_0),\;
    D_{\textsc{vb}}^0 = D_{\textsc{vb}}(\theta_0),
\end{align*}
on top of the covariance \((\tilde{V}_{\textsc{vb}}^0)^{-1}\) arising from the usual variational posterior, where \(\tilde{V}_{\textsc{vb}}^0\) is the diagonal matrix formed from the diagonal entries of \(V_{\textsc{vb}}^0\). In particular, \((\tilde{V}_{\textsc{vb}}^0)^{-1}\) corresponds to the covariance under the standard mean-field VB procedure, which only captures marginal variances.

\begin{thm}[Bernstein--von Mises theorem for the bagged variational posterior]
\label{thm-vbbag1}
Let \(\ell_\theta(x) = \log p_{\textsc{vb}}(x \mid \theta)\), and assume the following conditions hold:
\begin{enumerate}
    \item The map \(\theta \mapsto \ell_{\theta}(X_{1})\) is differentiable at \(\theta_0\) in probability.

    \item There exists an open neighborhood \(U\) of \(\theta_0\) and a function
    \(m_{\theta_0} : \mathcal{X} \to \mathbb{R}\) such that
    \(E_{P_0}\bigl(m_{\theta_0}^3\bigr) < \infty\), and for all
    \(\theta, \theta' \in U\),
    \[
        \bigl|\ell_\theta - \ell_{\theta'}\bigr|
        \le m_{\theta_0} \, \|\theta - \theta'\|_2
        \quad \text{a.s.\ }[P_0].
    \]

    \item As \(\theta \to \theta_0\),
    \[
        -E_{P_0}\bigl(\ell_\theta - \ell_{\theta_0}\bigr)
        = \frac{1}{2}(\theta - \theta_0)^\top V_{\textsc{vb}}^0 (\theta - \theta_0)
          + o\bigl(\|\theta - \theta_0\|_2^2\bigr).
    \]

    \item \(V_{\textsc{vb}}^0\) is invertible.

    \item For every \(\epsilon > 0\), there exists a sequence of tests \(\phi_n\) such that
    \begin{align*}
        &\int \phi_n(x_1,\dots,x_n)
          \prod_{i=1}^n p_0(x_i) \, dx_i \to 0, \\
        &\sup_{\theta:\,\|\theta - \theta_0\| > \epsilon}
          \int \bigl\{1 - \phi_n(x_1,\dots,x_n)\bigr\}
          \prod_{i=1}^n \frac{p_{\textsc{vb}}(x_i \mid \theta)}{p_{\textsc{vb}}(x_i \mid \theta_0)}\,
          p_0(x_i) \, dx_i \to 0.
    \end{align*}

    \item \(c = \lim_{n \to \infty} M/n \in (0,\infty)\), where \(M\) is the bootstrap sample size.
\end{enumerate}
Then, for \(\vartheta^\dag \sim q^{\textup{bvB}}(\theta \mid X_{1:n})\), we have
\begin{align}
\label{eq-5}
    \sqrt{n}(\vartheta^\dag - \theta_0) - \Delta_n
    \,\Big|\, X_{1:n}
    \overset{d}{\longrightarrow}
    N\!\left(
        0,\,
        \frac{1}{c}(\tilde{V}_{\textsc{vb}}^0)^{-1}
        + \frac{1}{c}(V_{\textsc{vb}}^0)^{-1} D_{\textsc{vb}}^0 (V_{\textsc{vb}}^0)^{-1}
    \right),
\end{align}
where
\[
    \Delta_n
    = n^{1/2} (V_{\textsc{vb}}^0)^{-1}
      (\mathbb{P}_n - P_0) \dot{\ell}_{\theta_0},
    \quad
    \mathbb{P}_n = n^{-1}\sum_{i=1}^n \delta_{X_i},
\]
and \(\tilde{V}_{\textsc{vb}}^0\) is the diagonal matrix with the same diagonal entries as \(V_{\textsc{vb}}^0\).
\end{thm}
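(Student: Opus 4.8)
The plan is to reduce the statement to the combination of a \emph{conditional} variational Bernstein--von Mises expansion and the bootstrap central limit theorem, following the strategy of \citet{huggins2019robust} but with the ordinary log-likelihood replaced throughout by the variational log-likelihood $\ell_\theta = \log p_{\textsc{vb}}(\cdot\mid\theta)$. \emph{Step 1 (conditional variational BvM).} Fix a bootstrap sample $X_{(b)}^*=(X_1^*,\dots,X_M^*)$ of size $M$, drawn i.i.d.\ from the empirical measure $\mathbb{P}_n$, and let $\mathbb{P}_M^{*(b)}$ denote its empirical measure. I would first show that, conditionally on $X_{(b)}^*$, the marginal variational posterior is asymptotically Gaussian,
\[
\Big\| q^*(\cdot\mid X_{(b)}^*) - N\!\Big(\theta_0 + \tfrac{1}{\sqrt M}\Delta_M^{*(b)},\ \tfrac1M(\tilde V_{\textsc{vb}}^0)^{-1}\Big)\Big\|_{\mathrm{TV}} \longrightarrow 0,
\qquad
\Delta_M^{*(b)} = \sqrt M\,(V_{\textsc{vb}}^0)^{-1}(\mathbb{P}_M^{*(b)} - P_0)\dot\ell_{\theta_0}.
\]
Conditions (1)--(5) are exactly the hypotheses under which the argument of \citet{kleijn2012bernstein}, in the variational form of \citet{wang2019variational} (which produces the \emph{diagonal} limiting covariance $(\tilde V_{\textsc{vb}}^0)^{-1}$ under a mean-field family), yields such an expansion for data i.i.d.\ from $P_0$: conditions (1)--(3) give the stochastic VLAN expansion around $\theta_0$, condition (4) identifies the limiting covariance, and condition (5) supplies the tests that exclude posterior mass outside a neighbourhood of $\theta_0$. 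The one new ingredient is that the bootstrap observations are i.i.d.\ from $\mathbb{P}_n$ rather than $P_0$; since $\mathbb{P}_n\to P_0$ and the envelope $m_{\theta_0}$ has finite third moment, the relevant no-bias and empirical-process bounds hold uniformly over $\mathbb{P}_n$ for $P_0$-almost every sequence $X_{1:n}$, so the displayed convergence holds with probability tending to one jointly over the data and the resampling. By exchangeability of the bootstrap samples it then also holds for the uniformly random index $b$ underlying a draw from $q^{\textup{bvB}}$, irrespective of $B$.

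\emph{Step 2 (mixture representation and decomposition of the centering).} A draw $\vartheta^\dag\sim q^{\textup{bvB}}(\cdot\mid X_{1:n})$ is obtained by selecting $b$ uniformly and then drawing $\vartheta^\dag\sim q^*(\cdot\mid X_{(b)}^*)$, so Step 1 gives
\[
\sqrt M(\vartheta^\dag - \theta_0) = \Delta_M^{*(b)} + \xi + o_P(1),\qquad \xi\sim N\!\big(0,(\tilde V_{\textsc{vb}}^0)^{-1}\big),
\]
where $\xi$ is the within-posterior (variational) noise, whose conditional law given $b$ does not depend on $b$; hence $\xi$ is independent of $\Delta_M^{*(b)}$ conditionally on $X_{1:n}$, which is what will produce the \emph{additive} covariance. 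Next I would split the centering as
\[
\Delta_M^{*(b)} = \sqrt M\,(V_{\textsc{vb}}^0)^{-1}(\mathbb{P}_M^{*(b)} - \mathbb{P}_n)\dot\ell_{\theta_0} \;+\; \sqrt{M/n}\;\Delta_n,
\]
using that $E_{P_0}\dot\ell_{\theta_0}=0$ (the linear term in the quadratic expansion of condition (3) vanishes, and the domination in condition (2) justifies differentiating under the expectation) together with $\Delta_n = \sqrt n\,(V_{\textsc{vb}}^0)^{-1}(\mathbb{P}_n - P_0)\dot\ell_{\theta_0}$.

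\emph{Step 3 (bootstrap CLT and assembly).} Conditionally on $X_{1:n}$, the term $\sqrt M(\mathbb{P}_M^{*(b)}-\mathbb{P}_n)\dot\ell_{\theta_0}$ is a normalized sum of $M$ i.i.d.\ mean-zero vectors with conditional covariance $\operatorname{Var}_{\mathbb{P}_n}(\dot\ell_{\theta_0})$, which converges $P_0$-a.s.\ to $D_{\textsc{vb}}^0$ by the law of large numbers (using $\|\dot\ell_{\theta_0}\|\le m_{\theta_0}$ and $E_{P_0}m_{\theta_0}^3<\infty$, which also verifies the Lindeberg condition); hence
\[
\sqrt M\,(V_{\textsc{vb}}^0)^{-1}(\mathbb{P}_M^{*(b)}-\mathbb{P}_n)\dot\ell_{\theta_0}\ \big|\ X_{1:n}\ \overset{d}{\longrightarrow}\ N\!\big(0,\,(V_{\textsc{vb}}^0)^{-1} D_{\textsc{vb}}^0 (V_{\textsc{vb}}^0)^{-1}\big).
\]
Combining this with Step 2, the independent Gaussian $\xi$, and $M/n\to c$ yields, conditionally on $X_{1:n}$,
\[
\sqrt M(\vartheta^\dag - \theta_0) - \sqrt c\,\Delta_n\ \big|\ X_{1:n}\ \overset{d}{\longrightarrow}\ N\!\big(0,\,(\tilde V_{\textsc{vb}}^0)^{-1} + (V_{\textsc{vb}}^0)^{-1} D_{\textsc{vb}}^0 (V_{\textsc{vb}}^0)^{-1}\big),
\]
and multiplying through by $\sqrt{n/M}\to 1/\sqrt c$ (Slutsky, conditionally on $X_{1:n}$) gives exactly \eqref{eq-5}.

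\emph{Main obstacle.} The substantive work is Step 1: one must carry the variational BvM of \citet{wang2019variational}/\citet{kleijn2012bernstein} through for data generated by the perturbed sampling law $\mathbb{P}_n$ instead of $P_0$, with all remainders controlled uniformly over the resampling on the $\sqrt M$-scale relevant when $M\asymp n$. This needs (i) a stochastic-equicontinuity / no-bias argument for the variational log-likelihood ratio that is stable under replacing $P_0$ by a nearby empirical measure, and (ii) care that $p_{\textsc{vb}}$ is itself defined through an inner maximization over $q_Z$, so the differentiability and Lipschitz hypotheses (1)--(2) must be invoked in place of the classical smoothness of $\log p(\cdot\mid\theta)$. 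Once Step 1 is established, Steps 2--3 are routine applications of the bootstrap CLT and Slutsky's theorem conditional on $X_{1:n}$.
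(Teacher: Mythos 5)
Your proposal is correct and follows essentially the same route as the paper: a conditional variational BvM for each bootstrap replicate (centered at $\Delta_n+\Delta_n^*$ with covariance $(c\tilde V_{\textsc{vb}}^0)^{-1}$), followed by a conditional CLT for the bootstrap centering that contributes the additive sandwich term. The paper fills in the two places you flag as the substantive work by (i) invoking Theorem 23.7 of van der Vaart (the bootstrap Donsker theorem) to transfer the stochastic expansion from $\mathbb{G}_n$ to $\mathbb{G}_n^*$ and then applying Theorem 3 of \citet{wang2019variational} to the bootstrapped empirical measure, and (ii) proving your Step 3 by an explicit characteristic-function expansion of the multinomial-weighted sum rather than citing a bootstrap CLT directly.
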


\begin{rem}
The sandwich term
\(
  (V_{\textsc{vb}}^0)^{-1} D_{\textsc{vb}}^0 (V_{\textsc{vb}}^0)^{-1}
\)
in \cref{thm-vbbag1} can be viewed as the \emph{target covariance}: it corresponds to the covariance of the MLE  defined via the variational log-likelihood \eqref{eq-vbmle}. This term is typically \emph{non-diagonal}, even when a mean-field variational family is used. It is precisely this additional sandwich term that drives robustness and more accurate uncertainty quantification of VB bagging: even under model misspecification, variational bagging attempts to recover the “best” covariance structure allowed by the model and the variational approximation.

When the model is specified correctly, we have
\(
  \textup{diag}\bigl((V_{\textsc{vb}}^0)^{-1} D_{\textsc{vb}}^0 (V_{\textsc{vb}}^0)^{-1}\bigr)
  = \textup{diag}\bigl((\tilde{V}_{\textsc{vb}}^0)^{-1}\bigr). 
\) That is, these two  terms share the same diagonal covariance. 
The covariance in \eqref{eq-5} can be decomposed as
\[
    \frac{1}{c}
    \Bigl(
        (V_{\textsc{vb}}^0)^{-1} D_{\textsc{vb}}^0 (V_{\textsc{vb}}^0)^{-1}
        - (\tilde{V}_{\textsc{vb}}^0)^{-1}
    \Bigr)
    + \frac{2}{c}(\tilde{V}_{\textsc{vb}}^0)^{-1}.
\]
The first term above contains only off-diagonal entries, while the second term is purely diagonal. In this case, one can recover the off-diagonal entries of the target covariance 
    \((V_{\textsc{vb}}^0)^{-1} D_{\textsc{vb}}^0 (V_{\textsc{vb}}^0)^{-1}\)
    from the bagged VB posterior (e.g., with \(c = 1\)), and for the diagonal term we need to make the simple adjustment by multiplying them by 1/2.
Combining these two pieces yields an estimator of the full target covariance.

When the model is misspecified, we can still recover the off-diagonal structure by setting \(c = 1\) and extracting only the off-diagonal entries from the bagged VB covariance. For the diagonal entries, we can use the choice of \(M\) described in Section~\ref{sec:boot_size} to match the desired marginal variances.
\end{rem}

\begin{rem}
If we instead use a variational family of Gaussian distributions with general covariance matrices,
\(\mathcal{Q} = \{q : q(\theta) = N(\mu, \Sigma)\}\), i.e., without a mean-field restriction, then by inspecting the proof of \cref{thm-vbbag1} one can see that the limiting distribution of the bagged variational posterior becomes
\begin{align*}
    \sqrt{n}(\vartheta^\dag - \theta_0) - \Delta_n
    \,\Big|\, X_{1:n}
    \overset{d}{\longrightarrow}
    N\!\left(
        0,\,
        \frac{1}{c} (V_{\textsc{vb}}^0)^{-1}
        + \frac{1}{c} (V_{\textsc{vb}}^0)^{-1} D_{\textsc{vb}}^0 (V_{\textsc{vb}}^0)^{-1}
    \right).
\end{align*}
\end{rem}

The next corollary is a special case of \cref{thm-vbbag1} when variational inference is not conducted over a latent variable \(Z\) (that is, we marginalize \(Z\) in the model), so that \(p_{\textsc{vb}}(x \mid \theta) = p(x \mid \theta)\).

\begin{cor}[BvM theorem without latent variables]
\label{thm:bvm_no_latent}
Assume that \(p_{\textsc{vb}}(x \mid \theta) = p(x \mid \theta)\). Let \(V^0 = V(\theta_0)\) and \(D^0 = D(\theta_0)\), and let \(\tilde{V}^0\) be the diagonal matrix with the same diagonal entries as \(V^0\). Then, under the same conditions as in \cref{thm-vbbag1}, for \(\vartheta^\dag \sim q^{\textup{bvB}}(\theta \mid X_{1:n})\),
\begin{align*}
    \sqrt{n}(\vartheta^\dag - \theta_0) - \Delta_n
    \,\Big|\, X_{1:n}
    \overset{d}{\longrightarrow}
    N\!\left(
        0,\,
        \frac{1}{c}(\tilde{V}^0)^{-1}
        + \frac{1}{c}(V^0)^{-1} D^0 (V^0)^{-1}
    \right),
\end{align*}
where
\[
    \Delta_n
    = n^{1/2}(V^0)^{-1}(\mathbb{P}_n - P_0)\dot{\ell}_{\theta_0}.
\]
\end{cor}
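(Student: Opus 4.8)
The plan is to obtain this corollary as a direct specialization of \cref{thm-vbbag1}: once we substitute $p_{\textsc{vb}}(x\mid\theta) = p(x\mid\theta)$, every variational object in the theorem collapses to its classical counterpart, and the assertion becomes bookkeeping.

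First I would record the consequences of the hypothesis $p_{\textsc{vb}}(x\mid\theta)=p(x\mid\theta)$. Since there is no latent variable to integrate against (equivalently, $Z$ has been marginalized out of the model before running VB), the inner maximization over $q_Z$ in the definition \eqref{eq-vbmle} of the variational log-likelihood is vacuous, so $\ell_\theta(x) := \log p_{\textsc{vb}}(x\mid\theta) = \log p(x\mid\theta)$ and $\dot\ell_{\theta_0} = \nabla\log p(\cdot\mid\theta_0)$. Plugging into \eqref{eq:vb_hessian} and \eqref{eq:vb_score_sq} gives $V_{\textsc{vb}}(\theta) = -E_{P_0}[\nabla^2\log p(X\mid\theta)] = V(\theta)$ and $D_{\textsc{vb}}(\theta) = E_{P_0}[\nabla\log p(X\mid\theta)\nabla\log p(X\mid\theta)^\top] = D(\theta)$; in particular $V_{\textsc{vb}}^0 = V^0$, $D_{\textsc{vb}}^0 = D^0$, and the diagonal extraction commutes with this identification, so $\tilde V_{\textsc{vb}}^0 = \tilde V^0$.

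Next I would check that the six numbered hypotheses of \cref{thm-vbbag1} are met. This is immediate: conditions (1)--(5), written out with $\ell_\theta = \log p(\cdot\mid\theta)$, are precisely the regularity assumptions on $\log p(x\mid\theta)$ --- differentiability in probability at $\theta_0$, a cubic-integrable local Lipschitz envelope, the quadratic expansion of the expected log-likelihood ratio with Hessian $V^0$, invertibility of $V^0$, and the testing/separation condition --- and condition (6), $c=\lim M/n\in(0,\infty)$, carries over verbatim; these are exactly the ``same conditions'' invoked in the statement. Note also that the mean-field restriction \eqref{eq-MF-family} on $\mathcal{Q}$ now involves only the factors $\prod_{j=1}^d q_{\theta_j}(\theta_j)$, so the diagonalization that produces $\tilde V^0$ in the limiting covariance is preserved.

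With these verifications in hand, \cref{thm-vbbag1} applies and yields, for $\vartheta^\dag \sim q^{\textup{bvB}}(\theta\mid X_{1:n})$,
\[
\sqrt n(\vartheta^\dag - \theta_0) - \Delta_n \,\big|\, X_{1:n}
\overset{d}{\longrightarrow}
N\!\left(0,\ \tfrac1c(\tilde V_{\textsc{vb}}^0)^{-1} + \tfrac1c(V_{\textsc{vb}}^0)^{-1}D_{\textsc{vb}}^0(V_{\textsc{vb}}^0)^{-1}\right),
\]
with $\Delta_n = n^{1/2}(V_{\textsc{vb}}^0)^{-1}(\mathbb P_n - P_0)\dot\ell_{\theta_0}$; substituting $V_{\textsc{vb}}^0 = V^0$, $D_{\textsc{vb}}^0 = D^0$, $\tilde V_{\textsc{vb}}^0 = \tilde V^0$, and $\dot\ell_{\theta_0} = \nabla\log p(\cdot\mid\theta_0)$ gives exactly the stated limit and the stated form of $\Delta_n$. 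There is essentially no analytic obstacle: the entire content lives in \cref{thm-vbbag1}, and the only point requiring a line of care is confirming that the $q_Z$-maximization in \eqref{eq-vbmle} really does degenerate to the identity $p_{\textsc{vb}}=p$ when there is no latent variable (or when $Z$ is integrated out before running VB), which is what makes the substitution legitimate; everything else is routine.
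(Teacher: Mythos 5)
Your proposal is correct and follows exactly the route the paper intends: the corollary is stated as an immediate specialization of \cref{thm-vbbag1} under the identification $p_{\textsc{vb}}(\cdot\mid\theta)=p(\cdot\mid\theta)$, which forces $V_{\textsc{vb}}^0=V^0$, $D_{\textsc{vb}}^0=D^0$, $\tilde V_{\textsc{vb}}^0=\tilde V^0$, and $\dot\ell_{\theta_0}=\nabla\log p(\cdot\mid\theta_0)$, after which the limiting covariance and the form of $\Delta_n$ follow by substitution. The paper gives no separate proof for this corollary precisely because the argument is the bookkeeping you describe.
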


In \cref{thm:bvm_no_latent}, the sandwich term \((V^0)^{-1} D^0 (V^0)^{-1}\) does not depend on the choice of variational family and coincides with the usual sandwich covariance for the MLE. Moreover, the full asymptotic covariance of the bagged variational posterior is larger than that of the MLE, with the difference given by \((\tilde{V}^0)^{-1}/c\). Thus, the bagged variational posterior yields conservative uncertainty quantification: asymptotically, it is never overconfident. This is rigorously formalized in the following corollary.

\begin{cor}[No overconfident credible sets]
\label{thm:confidence}
Assume the same conditions as in \cref{thm:bvm_no_latent} and take \(M = n\) (so that \(c = 1\)). Consider the ellipsoid
\begin{align*}
    C(r)
    = \left\{
        \theta \in \Theta :
        n(\theta - \hat{\theta}_{\textup{mle}})^\top \widehat{\Sigma}^{-1}
        (\theta - \hat{\theta}_{\textup{mle}})
        \le r^2
      \right\}
\end{align*}
with radius \(r > 0\), where \(\hat{\theta}_{\textup{mle}}\) is the MLE of \(\theta_0\) and
\(\widehat{\Sigma}\) is a consistent estimator of the asymptotic covariance
\[
    \Sigma_0
    = (\tilde{V}^0)^{-1} + (V^0)^{-1} D^0 (V^0)^{-1}
\]
of the bagged variational posterior. For \(\alpha \in (0,1)\), let \(r_{n,1-\alpha}\) be such that
\(C(r_{n,1-\alpha})\) is a \((1-\alpha)\)-credible set for \(\theta_0\) under the bagged variational posterior, i.e.,
\[
    Q^{\textup{bvB}}\bigl(\vartheta^\dag \in C(r_{n,1-\alpha})\bigr) = 1 - \alpha.
\]
Then
\begin{align*}
    \lim_{n \to \infty}
    P_0\bigl(\theta_0 \in C(r_{n,1-\alpha})\bigr)
    \ge 1 - \alpha.
\end{align*}
\end{cor}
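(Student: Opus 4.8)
The plan is to combine the Bernstein–von Mises theorem for the bagged variational posterior (\cref{thm:bvm_no_latent}) with the classical asymptotic normality of the MLE, and show that the credible ellipsoid $C(r_{n,1-\alpha})$ is asymptotically calibrated as a confidence set because the bagged posterior is ``overdispersed'' relative to the sampling distribution of $\hat\theta_{\textup{mle}}$. Concretely, under the stated conditions we have, conditionally on $X_{1:n}$ and in $P_0$-probability,
\[
\sqrt{n}(\vartheta^\dag - \theta_0) - \Delta_n \,\big|\, X_{1:n}
\overset{d}{\longrightarrow} N(0, \Sigma_0),
\qquad
\Sigma_0 = (\tilde V^0)^{-1} + (V^0)^{-1} D^0 (V^0)^{-1},
\]
while the MLE satisfies $\sqrt{n}(\hat\theta_{\textup{mle}} - \theta_0) \overset{d}{\to} N(0, \Sigma_{\textup{mle}})$ with $\Sigma_{\textup{mle}} = (V^0)^{-1} D^0 (V^0)^{-1}$. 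The key algebraic fact is $\Sigma_0 - \Sigma_{\textup{mle}} = (\tilde V^0)^{-1} \succeq 0$, so $\Sigma_0 \succeq \Sigma_{\textup{mle}}$ in the Loewner order.

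First I would translate the credible-set constraint into a statement about radii. Since $C(r_{n,1-\alpha})$ is defined through the quadratic form $n(\theta - \hat\theta_{\textup{mle}})^\top \widehat\Sigma^{-1}(\theta - \hat\theta_{\textup{mle}})$ with $\widehat\Sigma \overset{P_0}{\to} \Sigma_0$, and the bagged posterior draw $\vartheta^\dag$ centered at $\theta_0 + \Delta_n/\sqrt n$ has (conditional) limiting covariance $\Sigma_0$, I would show that the recentered, rescaled quantity $n(\vartheta^\dag - \hat\theta_{\textup{mle}})^\top \widehat\Sigma^{-1}(\vartheta^\dag - \hat\theta_{\textup{mle}})$ converges in distribution (conditionally, in probability) to $\chi^2_d$. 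This uses that $\sqrt n(\vartheta^\dag - \hat\theta_{\textup{mle}}) = \sqrt n(\vartheta^\dag - \theta_0 - \Delta_n/\sqrt n) + (\Delta_n - \sqrt n(\hat\theta_{\textup{mle}} - \theta_0))$; the first summand is conditionally asymptotically $N(0,\Sigma_0)$, and the second summand is a function of the data only (not of $\vartheta^\dag$). Handling that data-dependent shift is the crux, and here I would invoke the standard fact — which underlies all BvM-based coverage arguments and is already implicit in \citet{huggins2019robust} — that $\Delta_n - \sqrt n(\hat\theta_{\textup{mle}} - \theta_0) \overset{P_0}{\to} 0$. Indeed $\Delta_n = \sqrt n (V^0)^{-1}(\mathbb{P}_n - P_0)\dot\ell_{\theta_0}$ is exactly the first-order (influence-function) expansion of $\sqrt n(\hat\theta_{\textup{mle}} - \theta_0)$ under the regularity conditions of \cref{thm:bvm_no_latent}, so their difference is $o_{P_0}(1)$. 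Consequently the data-dependent shift is asymptotically negligible, $r_{n,1-\alpha} \to \chi^2_{d,1-\alpha}$ (the $(1-\alpha)$ quantile of $\chi^2_d$) in $P_0$-probability, and $\widehat\Sigma$ may be replaced by $\Sigma_0$ up to $o_{P_0}(1)$ terms.

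With the radius pinned down, I would compute the frequentist coverage. By the MLE CLT and Slutsky, $P_0(\theta_0 \in C(r_{n,1-\alpha})) = P_0\bigl( n(\hat\theta_{\textup{mle}} - \theta_0)^\top \widehat\Sigma^{-1}(\hat\theta_{\textup{mle}} - \theta_0) \le r_{n,1-\alpha}^2 \bigr) \to P\bigl( W^\top \Sigma_0^{-1} W \le \chi^2_{d,1-\alpha} \bigr)$ where $W \sim N(0, \Sigma_{\textup{mle}})$. Writing $W = \Sigma_0^{1/2} \xi$ does not quite work since $W$ has covariance $\Sigma_{\textup{mle}} \ne \Sigma_0$; instead I write $W = \Sigma_{\textup{mle}}^{1/2}\xi$ with $\xi \sim N(0,I_d)$, so the quadratic form equals $\xi^\top \Sigma_{\textup{mle}}^{1/2}\Sigma_0^{-1}\Sigma_{\textup{mle}}^{1/2}\xi$. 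Because $\Sigma_0 \succeq \Sigma_{\textup{mle}}$, the matrix $A := \Sigma_{\textup{mle}}^{1/2}\Sigma_0^{-1}\Sigma_{\textup{mle}}^{1/2}$ satisfies $A \preceq I_d$ (equivalently, all eigenvalues of $\Sigma_0^{-1}\Sigma_{\textup{mle}}$ lie in $(0,1]$), hence $\xi^\top A\xi \le \xi^\top\xi = \|\xi\|_2^2 \sim \chi^2_d$ pointwise. Therefore
\[
P\bigl( \xi^\top A \xi \le \chi^2_{d,1-\alpha}\bigr)
\;\ge\; P\bigl( \|\xi\|_2^2 \le \chi^2_{d,1-\alpha}\bigr)
\;=\; 1-\alpha,
\]
which gives $\lim_{n\to\infty} P_0(\theta_0 \in C(r_{n,1-\alpha})) \ge 1-\alpha$, as claimed.

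The main obstacle I anticipate is not the final Loewner-order comparison (which is short once the pieces are in place) but the bookkeeping needed to make the conditional convergence rigorous: the BvM in \cref{thm:bvm_no_latent} holds \emph{conditionally} on $X_{1:n}$ and only in $P_0$-probability, so defining $r_{n,1-\alpha}$ as an exact $(1-\alpha)$-quantile of $n(\vartheta^\dag - \hat\theta_{\textup{mle}})^\top\widehat\Sigma^{-1}(\vartheta^\dag - \hat\theta_{\textup{mle}})$ and showing $r_{n,1-\alpha}^2 \overset{P_0}{\to} \chi^2_{d,1-\alpha}$ requires a uniform-in-quantile argument (the limiting conditional law is continuous, so convergence in distribution upgrades to convergence of quantiles, but one must argue this happens along $P_0$-almost-every-sequence subsequences). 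A secondary technical point is justifying $\Delta_n - \sqrt n(\hat\theta_{\textup{mle}} - \theta_0) = o_{P_0}(1)$ from conditions (1)–(5) of \cref{thm-vbbag1}; this is the standard argmax/Z-estimator expansion for the variational-log-likelihood MLE (with $p_{\textsc{vb}} = p$ here), and I would either cite it or sketch the one-step expansion using the Lipschitz condition (2) and the quadratic expansion (3).
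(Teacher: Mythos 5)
Your proof is correct and follows essentially the same route as the paper's: establish $r_{n,1-\alpha}^2 \to \chi^2_{d,1-\alpha}$ in $P_0$-probability via the BvM theorem, reduce the frequentist coverage to a Gaussian quadratic-form probability with $W \sim N(0, S_0)$ where $S_0 = (V^0)^{-1}D^0(V^0)^{-1}$, and conclude via the Loewner comparison $\Sigma_0 \succeq S_0$ (your formulation $A = \Sigma_{\textup{mle}}^{1/2}\Sigma_0^{-1}\Sigma_{\textup{mle}}^{1/2} \preceq I_d$ is equivalent to the paper's observation that $S_0^{-1} - \Sigma_0^{-1} \succeq 0$). If anything, you are more careful than the paper at the centering step: the paper asserts ``$\Delta_n \to 0$ in $P_0$-probability,'' which is not literally true since $\Delta_n$ converges in distribution to a nondegenerate normal, whereas the statement actually needed --- and the one you correctly identify and justify --- is $\Delta_n - \sqrt{n}(\hat{\theta}_{\textup{mle}} - \theta_0) = o_{P_0}(1)$, so that the bagged posterior is asymptotically centered at the MLE.
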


\subsection{Valid variational Bayes uncertainty quantification}
\label{sec-vbbag}

Mean-field variational Bayes is well known for providing a fast and tractable approximation of the Bayesian posterior. As shown by \citet{wang2019frequentist} (see Equation~\eqref{eq:bvm_vp}), mean-field approximations are asymptotically normal under standard conditions, but they \emph{only approximate the diagonal terms of the covariance structure}, ignoring dependence among parameters. For this reason, although variational Bayes often delivers accurate first-order inference (e.g., point estimates), it is generally unsuitable for second-order inference such as uncertainty quantification, even when the model is correctly specified.

An interesting consequence of Theorem~\ref{thm-vbbag1} is that the variational bagging procedure can address this limitation by recovering the off-diagonal elements of the covariance structure that are wiped out in the standard mean-field variational approximation. In particular, when the model is correctly specified, variational bagging yields asymptotically valid uncertainty quantification, comparable to that of the standard Bayesian posterior.  We briefly discussed this in Remark 1 for the case of a well-specified model with the presence of latent variables, and the following deals with the case when there is no latent variable. 

\begin{cor}[BvM theorem when the model is correctly specified]
\label{cor-vbbag1}
Let $\ell_\theta(x) = \log p_{\textsc{vb}}(x \mid \theta)$ and assume Assumptions 1--5 in Theorem~\ref{thm-vbbag1} hold, with $M = n$. In this case $\theta_0$ is the true parameter so that $P_0 = P_{\theta_0}$. Then, for $\vartheta^\dag \sim q^{\textup{bvB}}(\theta \mid X_{1:n})$, we have
\begin{align*}
    \sqrt{n}(\vartheta^\dag - \theta_0) - \Delta_n
    \,\big|\, X_{1:n}
    \overset{d}{\longrightarrow}
    N\bigl(0,\,
        (\tilde{V}_{\textsc{vb}}^0)^{-1}
        + (V_{\textsc{vb}}^0)^{-1}
    \bigr),
\end{align*}
where $V_{\textsc{vb}}^0 = V_{\textsc{vb}}(\theta_0)$ and $\tilde{V}_{\textsc{vb}}^0$ is the diagonal matrix with the same diagonal entries as $V_{\textsc{vb}}^0$.
\end{cor}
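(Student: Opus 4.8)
The plan is to read off \cref{cor-vbbag1} from \cref{thm-vbbag1} once the ``sandwich'' term is shown to collapse under correct specification. First I would observe that taking $M = n$ forces $c = \lim_{n\to\infty} M/n = 1$, so Assumption~6 of \cref{thm-vbbag1} holds automatically; since Assumptions~1--5 are assumed, \cref{thm-vbbag1} applies verbatim and delivers, for $\vartheta^\dag \sim q^{\textup{bvB}}(\theta \mid X_{1:n})$,
\begin{align*}
    \sqrt{n}(\vartheta^\dag - \theta_0) - \Delta_n \,\big|\, X_{1:n}
    \overset{d}{\longrightarrow}
    N\!\left(0,\, (\tilde{V}_{\textsc{vb}}^0)^{-1} + (V_{\textsc{vb}}^0)^{-1} D_{\textsc{vb}}^0 (V_{\textsc{vb}}^0)^{-1}\right),
\end{align*}
with $\Delta_n = n^{1/2}(V_{\textsc{vb}}^0)^{-1}(\mathbb{P}_n - P_0)\dot{\ell}_{\theta_0}$, which is exactly the recentering claimed in the corollary. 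Hence the entire task reduces to proving the \emph{information matrix identity} $D_{\textsc{vb}}^0 = V_{\textsc{vb}}^0$ in the correctly specified case $P_0 = P_{\theta_0}$; given that, $(V_{\textsc{vb}}^0)^{-1} D_{\textsc{vb}}^0 (V_{\textsc{vb}}^0)^{-1} = (V_{\textsc{vb}}^0)^{-1}$, and the announced covariance $(\tilde{V}_{\textsc{vb}}^0)^{-1} + (V_{\textsc{vb}}^0)^{-1}$ follows.

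To establish $D_{\textsc{vb}}^0 = V_{\textsc{vb}}^0$ I would exploit that here there is no latent variable, so $p_{\textsc{vb}}(\cdot\mid\theta) = p(\cdot\mid\theta)$ and $\ell_\theta = \log p(\cdot\mid\theta)$ is a genuine log-density with ordinary score $\dot\ell_{\theta_0}$; thus $D_{\textsc{vb}}^0 = E_{P_0}[\dot\ell_{\theta_0}\dot\ell_{\theta_0}^\top]$ is the score covariance. I would then argue that Assumption~1 (differentiability of $\theta \mapsto \ell_\theta(X_1)$ at $\theta_0$ in probability) together with Assumption~2 (a local Lipschitz bound with an $L^3(P_0)$, hence $L^2(P_0)$, envelope) places the model $\{p(\cdot\mid\theta)\}$ in the regime where the score has mean zero, $E_{P_{\theta_0}}[\dot\ell_{\theta_0}] = 0$ (consistent with the vanishing linear term forced by Assumption~3), and where the second-order Taylor coefficient of $\textup{KL}(P_{\theta_0}, P_\theta)$ at $\theta = \theta_0$ equals $\tfrac{1}{2}$ times the Fisher information $I_{\theta_0} := E_{P_{\theta_0}}[\dot\ell_{\theta_0}\dot\ell_{\theta_0}^\top] = D_{\textsc{vb}}^0$, i.e.\
\begin{align*}
    \textup{KL}(P_{\theta_0}, P_\theta)
    = \tfrac{1}{2} (\theta - \theta_0)^\top I_{\theta_0} (\theta - \theta_0)
      + o\!\left(\|\theta - \theta_0\|_2^2\right),
    \qquad \theta \to \theta_0 .
\end{align*}
On the other hand, under $P_0 = P_{\theta_0}$ one has $-E_{P_0}(\ell_\theta - \ell_{\theta_0}) = \textup{KL}(P_{\theta_0}, P_\theta)$, so Assumption~3 states precisely that this same quantity equals $\tfrac{1}{2}(\theta-\theta_0)^\top V_{\textsc{vb}}^0(\theta-\theta_0) + o(\|\theta-\theta_0\|_2^2)$. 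Equating the two quadratic forms over all directions near $\theta_0$, and using that a symmetric matrix is determined by its associated quadratic form (together with the invertibility in Assumption~4 to stay in the nondegenerate case), yields $V_{\textsc{vb}}^0 = I_{\theta_0} = D_{\textsc{vb}}^0$.

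Substituting $D_{\textsc{vb}}^0 = V_{\textsc{vb}}^0$ into the limit from \cref{thm-vbbag1} gives the covariance $(\tilde{V}_{\textsc{vb}}^0)^{-1} + (V_{\textsc{vb}}^0)^{-1}$ with $\Delta_n$ unchanged, which is the assertion. I expect the only genuinely delicate step to be the rigorous justification that the comparatively weak Assumptions~1--2 already imply the classical identity $V_{\textsc{vb}}^0 = I_{\theta_0} = D_{\textsc{vb}}^0$ (equivalently, that the Hessian of the KL at $\theta_0$ is the score covariance) without additional smoothness hypotheses; if one is content to strengthen the hypotheses slightly — e.g.\ assume $\ell_\theta$ is twice continuously differentiable near $\theta_0$ with a $P_0$-integrable dominating function for $\nabla^2 \ell_\theta$ — the identity is immediate by differentiating $\int p(x\mid\theta)\,dx \equiv 1$ twice, which gives $E_{P_{\theta_0}}[\ddot\ell_{\theta_0}] = -E_{P_{\theta_0}}[\dot\ell_{\theta_0}\dot\ell_{\theta_0}^\top]$ and hence $V_{\textsc{vb}}^0 = -E_{P_{\theta_0}}[\ddot\ell_{\theta_0}] = D_{\textsc{vb}}^0$. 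Apart from this, the corollary is a verbatim specialization of \cref{thm-vbbag1} with $c = 1$.
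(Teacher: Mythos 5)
Your proposal is correct and matches the paper's (implicit) derivation: the paper supplies no separate proof of this corollary, treating it exactly as \cref{thm-vbbag1} with $c=1$ combined with the second Bartlett identity $D_{\textsc{vb}}^0 = V_{\textsc{vb}}^0$ under correct specification, which collapses the sandwich term to $(V_{\textsc{vb}}^0)^{-1}$. Your caveat that this identity genuinely relies on $p_{\textsc{vb}}(\cdot\mid\theta) = p(\cdot\mid\theta)$ being a normalized density (i.e.\ the no-latent-variable case signalled in the text immediately preceding the corollary, since for a general variational log-likelihood $\int p_{\textsc{vb}}(x\mid\theta)\,dx \ne 1$ and the information equality can fail) is exactly the right point to flag.
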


When the model is correctly specified and the usual regularity conditions hold, the Fisher information structure implies that
\[
    \text{diag}\bigl((V_{\textsc{vb}}^0)^{-1}\bigr)
    =
    \text{diag}\bigl((\tilde{V}_{\textsc{vb}}^0)^{-1}\bigr),
\]
so that the limiting covariance in Corollary~\ref{cor-vbbag1} can be decomposed as
\begin{align*}
    (\tilde{V}_{\textsc{vb}}^0)^{-1}
    + (V_{\textsc{vb}}^0)^{-1}
    &= 2\,\text{diag}\bigl((V_{\textsc{vb}}^0)^{-1}\bigr)
       \;+\; \text{offdiag}\bigl((V_{\textsc{vb}}^0)^{-1}\bigr),
\end{align*}
where $\text{offdiag}(A)$ denotes the matrix obtained from $A$ by zeroing out its diagonal. The first term above only has nonzero diagonal entries, while the second term only contains off-diagonal entries. Thus:
\begin{itemize}
    \item the \emph{off-diagonal} part of the target covariance $(V_{\textsc{vb}}^0)^{-1}$ is recovered by the bagged variational posterior, and
    \item the \emph{diagonal} part is inflated by a factor of~2 relative to the target covariance.
\end{itemize}
To recover the same covariance as in the standard Bayesian BvM result, one can simply rescale the diagonal entries of the bagged VB covariance by $1/2$, keeping the off-diagonal entries unchanged. In this way, variational bagging can serve as a general tool to improve and calibrate uncertainty quantification based on mean-field variational Bayes, restoring both the correct marginal variances (after a simple adjustment) and the dependence structure.

\subsubsection{Toy example illustration}\label{sec-toy}

We illustrate the implication of Corollary~\ref{cor-vbbag1} with a simple toy example. Consider estimation of the mean vector 
$\mu$ for two-dimensional Gaussian data $X_1,\dots,X_{500}$, where
\[
    X_i \sim N(\mu, \Sigma), \quad
    \mu = (-1, 1)^\top,\quad
    \Sigma = 
    \begin{pmatrix}
        1 & 0.5 \\
        0.5 & 1
    \end{pmatrix}.
\]
We place a conjugate prior on $\mu$ and compare three posterior approximations: (i) Hamiltonian Monte Carlo (HMC), (ii) a mean-field variational approximation, and (iii) our variational bagging approach.

Figure~\ref{fig:2d} shows the resulting $95\%$ posterior credible regions. The mean-field variational posterior provides a reasonable approximation of the posterior mean of $\mu$, but it fails to capture the correlation structure between the components of $\mu$ and yields an elliptical region aligned with the coordinate axes. In contrast, the bagged variational posterior closely matches the full Bayesian posterior obtained by HMC, successfully recovering the correlation and orientation of the credible region. This illustrates how variational bagging can substantially improve uncertainty quantification over standard mean-field variational Bayes while retaining its computational benefits.

\begin{figure}[htbp!]
    \centering
    \includegraphics[width=0.5\textwidth]{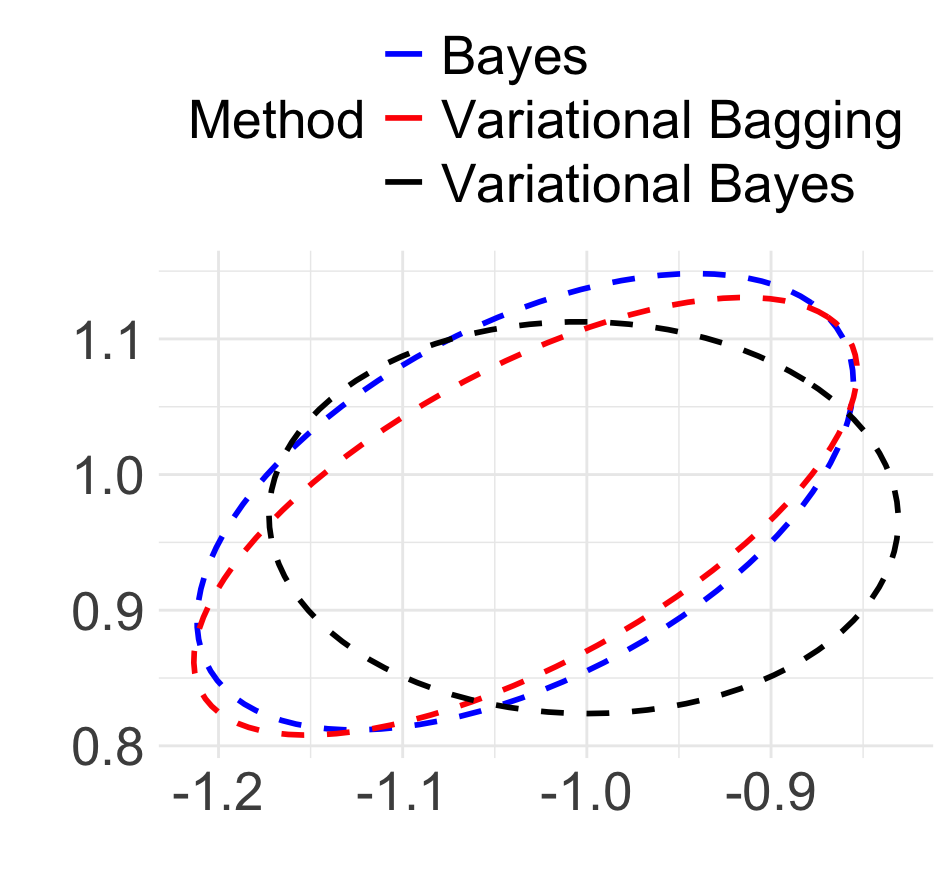}
    \caption{$95\%$ posterior credible regions for a two-dimensional Gaussian mean: comparison of HMC, mean-field VB, and variational bagging. }
    \label{fig:2d}
\end{figure}

\subsection{Contraction rates of the bagged variational posterior}

Our BvM theorem (Theorem~\ref{thm-vbbag1}) implies that the bagged variational posterior contracts to $\theta_0$ at the parametric rate $n^{-1/2}$ for finite-dimensional parametric models satisfying its assumptions. In this section, we extend this result to a general setup that encompasses nonparametric models, and we consider convergence in Hellinger distance. Let
\[
    H(P_1, P_2)
    =
    \left(
        \int \bigl(\sqrt{dP_1/dP_2} - 1\bigr)^2 \, dP_2
    \right)^{1/2}
\]
denote the Hellinger distance between two probability measures $P_1$ and $P_2$. To quantify the corresponding convergence rate, we use the notion of bracketing Hellinger metric entropy.

We say that a finite collection of pairs of functions
\(\{(f_i^L, f_i^U) : i = 1,\dots, N\}\) is a $\delta$-bracketing of a function space
\(\mathcal{F}\) if
\[
    \bigl\|(f_i^U)^{1/2} - (f_i^L)^{1/2}\bigr\|_2 \le \delta,
    \quad i = 1,\dots, N,
\]
and for any \(f \in \mathcal{F}\) there exists \(i \in \{1,\dots, N\}\) such that
\(f_i^L \le f \le f_i^U\). The $\delta$-bracketing Hellinger metric entropy of
\(\mathcal{F}\), denoted by $\mathcal{H}_B(\delta, \mathcal{F})$, is defined as the logarithm of the cardinality of a minimal $\delta$-bracketing.

\begin{assumption}
\label{assume:contraction}
Let $(\epsilon_n)_{n=1,2,\dots,}$ be a positive sequence such that $\epsilon_n\to0$ and $n \epsilon_n^2 \to \infty$ as $n \to \infty$. Assume the following:
\begin{enumerate}
    \item[\textup{(1)}] \textbf{(Prior mass)} There exists a constant $C_1 > 0$ such that
    \begin{align*}
        \Pi\Bigl(
            \theta \in \Theta :
            \textup{KL}(P_0, P_\theta) \le \epsilon_n^2,\;
            \textup{KLV}(P_0, P_\theta) \le \epsilon_n^2
        \Bigr)
        \;\ge\; \exp(-C_1 n \epsilon_n^2),
    \end{align*}
    where
    \(\textup{KLV}(P_0, P_\theta)
        = \int \bigl(\log(dP_\theta/dP_0)\bigr)^2 \, dP_0\).

    \item[\textup{(2)}] \textbf{(Sieve and complexity)} There exists a subset $\Theta_n \subset \Theta$ such that
    \begin{align*}
        \Pi(\Theta \setminus \Theta_n)
        \le \exp\bigl(-(C_1 + 4)n \epsilon_n^2\bigr)
    \end{align*}
    and, for some constants $C_2 > 0$ and $C_3 > 0$,
    \begin{align*}
        \int_{\epsilon^2/2^8}^{\sqrt{2}\,\epsilon}
            \mathcal{H}_B^{1/2}
            \bigl(u/C_2,\,
                  \{p(\cdot \mid \theta) : \theta \in \Theta_n\}
            \bigr)
        \, du
        \;\le\;
        C_3 \sqrt{n}\,\epsilon^2
    \end{align*}
    for any $\epsilon > \epsilon_n$.

    \item[\textup{(3)}] \textbf{(Variational family)} There exists a constant $C_4 > 0$ such that
    \begin{align*}
        \inf_{q \in \mathcal{Q}}
        \left[
            n \int \textup{KL}(P_{\theta}, P_{0})\, q(\theta)\,d\theta
            + \textup{KL}(q, \pi)
        \right]
        \le C_4 n \epsilon_n^2.
    \end{align*}

    \item[\textup{(4)}] \textbf{(Bootstrap sample size)} $M = n$.
\end{enumerate}
\end{assumption}

The “prior mass and testing’’ method, first developed in the seminal paper \citet{ghosal2000convergence}, is a powerful and general tool for deriving contraction rates of posterior distributions. We retain a prior mass condition in Assumption~\ref{assume:contraction}(1), which plays the same role as in the prior mass and testing framework. However, in our setting it is not straightforward to construct suitable tests based on bootstrapped samples. To the best of our knowledge, there is no principled general approach for test construction for Bayesian procedures involving bootstrap-based posteriors.

Instead, we follow the strategy of \citet{shen2001rates} with a suitable modification to handle bootstrap-weighted likelihoods. \citet{han2019statistical} adopt a similar idea, but their result is restricted to parametric models. In our nonparametric setting, we use the complexity condition on the Hellinger metric entropy in Assumption~\ref{assume:contraction}(2), adapted from \citet{shen2001rates}, to control the behavior of the empirical process of the “bootstrap-weighted’’ log-likelihood ratio. The third assumption on the variational family, which has been employed in several related works such as \citet{ohn2024adaptive} and \citet{zhang2020convergence}, controls the variational approximation error between the variational posterior and the original posterior distribution. We show that this assumption, together with the bootstrap sample size condition in Assumption~\ref{assume:contraction}(4), can still be used to bound the variational approximation error even when a bootstrapped sample is used.

\begin{thm}[Contraction rate]
\label{thm:contraction}
Suppose that Assumption~\ref{assume:contraction} holds. Then the bagged variational posterior satisfies
\begin{align*}
    E\Bigl[
        Q^{\textup{bvB}}
        \bigl(
            H^2(P_{\theta}, P_{0})
            \ge M_n \epsilon_n^2 \log n
        \bigr)
    \Bigr]
    \;\longrightarrow\; 0,
\end{align*}
as $n \to \infty$ for any diverging sequence $(M_n)$ with $M_n \to \infty$, where the expectation is taken with respect to $P_0^{\otimes n}$.
\end{thm}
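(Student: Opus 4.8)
The plan is to strip off the bagging and then the variational approximation, reducing the claim to a Hellinger contraction statement for a single bootstrap (BayesBag-type) posterior, which is handled by the entropy-based argument of \citet{shen2001rates} adapted to bootstrap-weighted likelihood ratios (following the route of \citet{han2019statistical} in the parametric case). Put $A_n=\{\theta:H^2(P_\theta,P_0)<M_n\epsilon_n^2\log n\}$ and work on the product of $P_0^{\otimes n}$ with the resampling law. Since $q^{\textup{bvB}}(\cdot\mid X_{1:n})=B^{-1}\sum_{b=1}^B q^*(\cdot\mid X_{(b)}^*)$ is a convex combination with identically distributed summands given $X_{1:n}$, Tonelli gives $E[Q^{\textup{bvB}}(A_n^c)]=E_{X_{1:n}}E_{X^*\mid X_{1:n}}[q^*(A_n^c\mid X^*)]$ for a single bootstrap sample $X^*=X_{1:M}^*$ with $M=n$ (Assumption~\ref{assume:contraction}(4)). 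Representing the resampling by multinomial counts $(N_1,\dots,N_n)\sim\mathrm{Mult}(n;1/n,\dots,1/n)$, the bootstrap log-likelihood ratio is $\sum_j N_j\log\{p(X_j\mid\theta)/p(X_j\mid\theta_0)\}$ and the bootstrap empirical measure $\mathbb{P}_n^*=n^{-1}\sum_j N_j\delta_{X_j}$; by Markov it suffices to show $E_{X,X^*}[q^*(A_n^c\mid X^*)]\to0$.

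Next, peel off the variational step. For probability measures $q\ll\pi$ and any event $A$ one has $q(A)\le(\textup{KL}(q,\pi)+2/e)/\log(1/\pi(A))$. Applying this with $q=q^*(\cdot\mid X^*)$ and $\pi=\pi(\cdot\mid X^*)$ the bootstrap posterior, it suffices to establish, on events of probability tending to one, (i) $\textup{KL}(q^*(\cdot\mid X^*),\pi(\cdot\mid X^*))=o(n\epsilon_n^2\log n)$ and (ii) $\pi(A_n^c\mid X^*)\le\exp(-K\,n\epsilon_n^2\log n)$ with $K$ as large as we like (achieved by taking $M_n\to\infty$). The extra $\log n$ factor in the stated rate is precisely what separates the $O(n\epsilon_n^2)$-scale variational gap in (i) from the exponent needed in (ii). Claim (i) follows from the mean-field optimality of $q^*$, the ELBO identity, Assumption~\ref{assume:contraction}(3), and the control of the marginal-likelihood ratio $D^*$ below, all transferred to the bootstrap via $M=n$ (which lets $E_N$ convert the bootstrap-weighted quantities into their unbootstrapped counterparts); so the work is in (ii).

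For (ii), write $\pi(A_n^c\mid X^*)=N^*/D^*$ with $D^*=\int\prod_i\{p(X_i^*\mid\theta)/p(X_i^*\mid\theta_0)\}\,d\Pi$ and $N^*=\int_{A_n^c}(\cdots)\,d\Pi$. For $D^*$, restrict to the Kullback--Leibler set of Assumption~\ref{assume:contraction}(1); using the multinomial representation of the log-ratio, Jensen in $\theta$, and a law-of-total-variance bound in which the multinomial contribution to the variance is $O(n\epsilon_n^2)$ on that set precisely because $M=n$, Chebyshev yields $D^*\ge e^{-c\,n\epsilon_n^2}$ with probability $\to1$, as in the prior-mass lemma of \citet{ghosal2000convergence}. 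For $N^*$, the sieve complement contributes at most $\Pi(\Theta\setminus\Theta_n)\le e^{-(C_1+4)n\epsilon_n^2}$, negligible against $D^*$. On $A_n^c\cap\Theta_n$ I follow \citet{shen2001rates}: partition into Hellinger shells $\{2^k\delta_n\le H(P_\theta,P_0)<2^{k+1}\delta_n\}$, $\delta_n^2=M_n\epsilon_n^2\log n$, cover each shell by the brackets counted by $\mathcal{H}_B(\cdot,\{p(\cdot\mid\theta):\theta\in\Theta_n\})$ of Assumption~\ref{assume:contraction}(2), and on a bracket with upper function $p^U$ bound $\prod_i\{p(X_i^*\mid\theta)/p(X_i^*\mid\theta_0)\}$ by $\prod_j\{p^U(X_j)/p(X_j\mid\theta_0)\}^{N_j}$; taking square roots and then $E_N$ given $X$, the multinomial moment generating function produces $\{n^{-1}\sum_j(p^U(X_j)/p(X_j\mid\theta_0))^{1/2}\}^{n}$, whose inner average has $P_0$-mean $\le1-c'\,2^{2k}\delta_n^2$ (up to the bracket slack) by the Hellinger geometry of the shell. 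On the event that this average stays near its mean, the $n$-th power is $\le e^{-c''n\,2^{2k}\delta_n^2}$; that event holds with probability $\to1$ uniformly over the $e^{\mathcal{H}_B}$ brackets by virtue of the entropy integral $\int\mathcal{H}_B^{1/2}(u/C_2,\cdot)\,du\le C_3\sqrt n\,\epsilon^2$, which governs a Bernstein/chaining bound on the bootstrap-weighted empirical process of the log-ratio obtained by splitting $\mathbb{P}_n^*-P_0=(\mathbb{P}_n^*-\mathbb{P}_n)+(\mathbb{P}_n-P_0)$ and handling the multiplier part using that $N_j-1$ is centered with $O(1)$ per-coordinate variance and sub-exponential tails (again because $M=n$). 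Summing $e^{-c''n\,2^{2k}\delta_n^2}$ over $k$ gives $N^*\le e^{-c'''n\delta_n^2}$ with probability $\to1$, and dividing by the bound on $D^*$ yields (ii).

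The main obstacle is this bootstrap adaptation of Shen's entropy argument, concentrated in the $n$-th power produced by the multinomial moment generating function: it is harmless only where the reweighted bracket averages $n^{-1}\sum_j(p^U(X_j)/p(X_j\mid\theta_0))^{1/2}$ stay close to their $P_0$-means, so one must show both that this holds with probability $\to1$ simultaneously over all entropy-many brackets of every shell, and that the small-probability complement is absorbed by the trivial bound $\pi(\cdot\mid X^*)\le1$. Equivalently, one must verify that the Bernstein/sub-exponential constants of the multiplier process $\eta\mapsto n^{-1}\sum_j(N_j-1)\eta(X_j)$, over bracket functions $\eta$, combine with the entropy integral so that the extra fluctuation from the resampling stays at the same $O(\sqrt n\,\epsilon_n)$ scale as the ordinary empirical process. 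This is exactly where Assumption~\ref{assume:contraction}(4) ($M=n$) is indispensable, and why the contraction rate is inflated only by the benign factor $\log n$.
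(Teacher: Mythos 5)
Your overall architecture matches the paper's proof: reduce to a single bootstrap replicate, use a Donsker--Varadhan-type inequality to pass from the variational posterior to the bootstrap posterior at the cost of $\textup{KL}(q^*,\pi(\cdot\mid X^*))$, control that KL gap via the optimality of $q^*$, Assumption~\ref{assume:contraction}(3) and $\sum_i K_i=n$, lower-bound the bootstrap marginal likelihood by Chebyshev on the weighted log-likelihood ratio under the prior-mass condition, kill the sieve complement trivially, and run a Shen--Wasserman shell/peeling argument on $A_n^c\cap\Theta_n$. The one genuinely different sub-step is the testing bound: you integrate out the multinomial weights first via the MGF, reducing $\prod_j r_j^{N_j/2}$ to $\bigl(n^{-1}\sum_j r_j^{1/2}\bigr)^n$ and then asking the square-root bracket averages to concentrate uniformly over brackets; the paper instead keeps the weights as multipliers, works with the truncated log-ratio process $\nu_n(\theta)=n^{-1/2}\sum_i[K_iU_\theta(X_i)-E(K_iU_\theta(X_i))]$, and imports a ready-made Bernstein inequality (Lemma~12 of \citet{han2019statistical}) combined with the high-probability bound $\max_iK_i\le 2\log n$ --- which is precisely where the $(1+2\log n)$ deflation of the exponent, and hence the $\log n$ in the rate, comes from. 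Your route is plausible but leaves the hardest lemma (uniform concentration of the reweighted bracket averages over entropy-many brackets, with the $n$-th power amplification under control) as an acknowledged obstacle rather than a proof, whereas the paper's multiplier-process formulation is exactly what makes that step citable. Two smaller points: (a) you normalize the likelihood ratios by $p(\cdot\mid\theta_0)$ rather than $p_0$; under misspecification $E_{P_0}\bigl[(p_\theta/p_{\theta_0})^{1/2}\bigr]$ is not $1-H^2(P_\theta,P_0)/2$, so the ``Hellinger geometry'' step only goes through if you center at $p_0$ as the paper does; (b) your first explanation of the $\log n$ factor (separating the variational gap from the testing exponent) is not the operative one --- $M_n\to\infty$ alone handles that --- though your closing paragraph correctly locates it in the fluctuation of the resampling weights.
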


\subsection{Illustrating examples}
\label{sec-appmodel}

In this section, we demonstrate our theory by considering two simple examples: a multivariate Gaussian example and a two-component finite mixture model.

\subsubsection{Multivariate Gaussian mean}

Consider again the toy example in Section~\ref{sec-toy}. That is,
$X_{1:n} \in \mathbb{R}^2$ with
\[
    X_i \sim N(\mu_0, \Lambda_0^{-1}),
\]
where $\Lambda_0$ is known and we are interested in estimating the mean vector $\mu_0$. For the posterior distribution $\pi(\mu \mid X_{1:n})$, the Bernstein--von Mises theorem yields
\begin{align*}
    \sqrt{n}(\mu - \bar{X}_n)
    \stackrel{d}{\longrightarrow}
    N(0, \Lambda_0^{-1}),
\end{align*}
for $\mu \sim \pi(\mu \mid X_{1:n})$, where $\bar{X}_n$ is the sample mean.

For variational Bayes, we consider the mean-field variational family, which assumes
$q(\mu) = q_1(\mu_1)\,q_2(\mu_2)$. By the BvM theorem for the variational posterior in \citet{wang2019variational}, we have
\begin{align*}
    \sqrt{n}(\mu - \bar{X}_n)
    \stackrel{d}{\longrightarrow}
    N\Biggl(
        0,\,
        \begin{pmatrix}
            \Lambda_{022}^{-1} & 0 \\
            0 & \Lambda_{011}^{-1}
        \end{pmatrix}/\text{det}(\Lambda_0)
    \Biggr),
\end{align*}
for $\mu$ drawn from the mean-field variational posterior $q^*(\mu)$, where we decompose the true precision matrix as
\[
    \Lambda_0 =
    \begin{pmatrix}
        \Lambda_{011} & \Lambda_{012} \\
        \Lambda_{021} & \Lambda_{022}
    \end{pmatrix}.
\]
Compared with the asymptotic distribution of the exact posterior, we see that variational Bayes ignores the correlation between $\mu_1$ and $\mu_2$.

Now, applying Corollary~\ref{cor-vbbag1}, we obtain
\begin{align*}
    \sqrt{n}(\mu - \bar{X}_n)
    \,\big|\, X_{1:n}
    \stackrel{d}{\longrightarrow}
    N\Biggl(
        0,\,
        \begin{pmatrix}
            \Lambda_{022}^{-1} & 0 \\
            0 & \Lambda_{011}^{-1}
        \end{pmatrix}/\text{det}(\Lambda_0)
        + \Lambda_0^{-1}
    \Biggr),
\end{align*}
for $\mu$ drawn from the bagged variational posterior $q^{\textup{bvB}}(\mu \mid X_{1:n})$.

Therefore, using variational bagging, we recover the off-diagonal (correlation) structure of the true posterior covariance matrix via the $\Lambda_0^{-1}$ term, while the diagonal terms are inflated relative to the target covariance. As discussed in Section~\ref{sec-vbbag}, a simple correction of rescaling the diagonal entries by a factor of $1/2$ recovers the full target covariance, providing a concrete illustration of how variational bagging improves uncertainty quantification over standard mean-field variational Bayes in this simple Gaussian setting.

\subsubsection{Bayesian mixture models}

In this example, we consider model misspecification in a finite mixture model. For technical simplicity, we assume that our working inference model is a symmetric two-component Gaussian mixture with unit variance, 
\begin{align*}
    p(x \mid \theta)
    = \frac{1}{2} N(x; \theta, 1)
      + \frac{1}{2} N(x; -\theta, 1),
\end{align*}
with $\theta > 0$, while the true data-generating distribution $P_0$ is not necessarily in this model class.

The above mixture model can be equivalently written in hierarchical form as
\begin{align*}
    X_i \mid Z_i &\sim N\bigl((2Z_i - 1)\theta, 1\bigr),\\
    Z_i &\sim \mathrm{Bernoulli}(1/2),
\end{align*}
so we can conduct variational inference jointly over the parameter $\theta$ and latent variables $Z_1,\dots,Z_n$. We consider a mean-field variational family in which each $Z_i$ has a degenerate (point-mass) distribution at either $0$ or $1$, which is analogous to a hard clustering procedure.

In this case, it is straightforward to see that the variational log-likelihood is given by
\begin{align*}
    \ell(\theta)
    = \log p_{\textsc{vb}}(X \mid \theta)
    &= \sup_{q(Z)}
       \int q(Z)\,\log \frac{p(X, Z \mid \theta)}{q(Z)}\, dZ \\
    &= \max_{z \in \{0,1\}} \log p(X, z \mid \theta) \\
    &= \frac{1}{2}\max\bigl\{-(X-\theta)^2,\; -(X+\theta)^2\bigr\}
       - \frac{1}{2}\log(2\pi) \\
    &= -\frac{1}{2}\bigl(X - \operatorname{sign}(X)\theta\bigr)^2
       - \frac{1}{2}\log(2\pi).
\end{align*}
Differentiating twice in $\theta$ gives
\begin{align*}
    V_{\textsc{vb}}(\theta)
    = -E_{P_0}\Bigl[\frac{d^2}{d\theta^2}\,\ell(\theta)\Bigr]
    = 1.
\end{align*}

It is then easy to verify that the first through fourth conditions of \cref{thm-vbbag1} hold for this model. The fifth condition (existence of suitable tests) follows from Theorem~1 of \citet{westling2019beyond}, which establishes consistency of the maximum variational likelihood estimator in this setting. In our case, the maximum ``variational'' likelihood estimator is
\begin{align*}
    \hat{\theta}_n
    = \frac{1}{n} \sum_{i=1}^n X_i\,\operatorname{sign}(X_i),
\end{align*}
and this estimator is consistent for the pseudo-true parameter $\theta_0$ under model misspecification.

Applying \cref{thm-vbbag1}, we obtain, for
$\vartheta^\dag \sim q^{\textup{bvB}}(\theta \mid X_{1:n})$,
\begin{align*}
    \sqrt{n}(\vartheta^\dag - \hat{\theta}_n)
    \,\big|\, X_{1:n}
    \overset{d}{\longrightarrow}
    N\!\left(
        0,\,
        \frac{1}{c}\left\{
            1 + E_{P_0}\bigl[(X - \operatorname{sign}(X)\theta_0)^2\bigr]
        \right\}
    \right),
\end{align*}
where $c = \lim_{n \to \infty} M/n$ is the limiting ratio of bootstrap sample size to the original sample size. This illustrates how, even under model misspecification in a mixture setting, the bagged variational posterior enjoys a well-defined asymptotic distribution with a variance that incorporates a sandwich-type correction term.

\section{Simulation study}
\label{sec-sim}

In this section, we conduct a simulation study in which we apply variational bagging to several examples, including the multivariate Gaussian model, a finite Gaussian mixture model, sparse linear regression, regression based on feedforward deep neural networks, and a bagged VAE (variational autoencoder). We first discuss some practical aspects of variational bagging, such as the choice of bootstrap sample size and the number of bootstrap replicates.

 \subsection{Bootstrap sample size and number of bootstrap samples}
\label{sec:boot_size}

If we have strong confidence in our model specification, we may set $M = n$ and use mean-field VB to learn the off-diagonal terms of the covariance and take 1/2 of the diagonal terms. Under model misspecification, however, as illustrated in the previous examples, choosing $M = n$ (i.e., $c = 1$) may not yield the desired robust, sandwich-type covariance.

Let $\tilde{v}_n$ and $\tilde{v}_n^*$ denote, respectively, the standard and bagged variational posterior variances of a real-valued functional $f(\theta)$, where the bagged variational posterior is computed with $M = n$. In the asymptotic BvM setting, the bagged variational covariance (with a general $c$) behaves like
\[
    \frac{1}{c} (V_{\textsc{vb}}^0)^{-1}
    + \frac{1}{c} (V_{\textsc{vb}}^0)^{-1} D_{\textsc{vb}}^0 (V_{\textsc{vb}}^0)^{-1},
\]
while the “target’’ (sandwich) covariance is
\[
    (V_{\textsc{vb}}^0)^{-1} D_{\textsc{vb}}^0 (V_{\textsc{vb}}^0)^{-1}.
\]
At the level of a scalar functional $f(\theta)$, this corresponds to finding $c$ such that
\[
    \frac{1}{c} \tilde{v}_n + \frac{1}{c} \bigl(\tilde{v}_n^* - \tilde{v}_n\bigr)
    \approx \tilde{v}_n^* - \tilde{v}_n,
\]
which yields
\[
    \frac{1}{c} \tilde{v}_n^* = \tilde{v}_n^* - \tilde{v}_n
    \quad \Longrightarrow \quad
    c = \frac{\tilde{v}_n^*}{\tilde{v}_n^* - \tilde{v}_n}.
\]
Hence the corresponding optimal bootstrap sample size is
\[
    M^* = c \cdot n
    = \frac{\tilde{v}_n^*}{\tilde{v}_n^* - \tilde{v}_n}\, n.
\]
We therefore suggest the plug-in estimator
\begin{equation}
\label{eq-4}
    \hat{M}^*
    = \frac{\tilde{v}_n^*}{\tilde{v}_n^* - \tilde{v}_n}\, n.
\end{equation}

In finite-sample settings, we also need to account for prior influence when choosing a suitable bootstrap sample size. Following \citet{huggins2019robust}, we define a finite-sample version of the optimal bootstrap size, denoted $\hat{M}^*_{\mathrm{fs},\mathrm{opt}}$.

Let $v_0$ denote the prior variance of $f(\theta)$ and define
\[
    \hat{\sigma}_{\circ}^2
    := n\, v_0 \tilde{v}_n \big/ \bigl(v_0 - \tilde{v}_n\bigr),
\]
and
\begin{equation}
    \hat{s}_{\circ}^2
    := \frac{v_0^2}{\bigl(v_0 - \tilde{v}_n\bigr)^2}
       \bigl(\tilde{v}_n^* - \tilde{v}_n\bigr)\, n.
\end{equation}
Then the finite-sample optimal bootstrap size is estimated by
\begin{equation}
\label{eq:fs_opt}
    \hat{M}_{\mathrm{fs},\mathrm{opt}}
    := \frac{n}{2}
       + \frac{n \hat{\sigma}_{\circ}^2}{2 \hat{s}_{\circ}^2}
       - \frac{\hat{\sigma}_{\circ}^2}{v_0}
       + \left\{
           \left(\frac{n}{2}
                 + \frac{n \hat{\sigma}_{\circ}^2}{2 \hat{s}_{\circ}^2}
           \right)^2
           - \frac{n \hat{\sigma}_{\circ}^2}{v_0}
         \right\}^{1/2}.
\end{equation}
For the derivation of \eqref{eq:fs_opt}, we refer to Appendix~E of \citet{huggins2019robust}.

Regarding the number of bootstrap samples $B$, \citet{huggins2019robust} recommend $B \approx 50$ to $100$ for BayesBag due to the computational cost of repeated MCMC. In variational bagging, by contrast, computing each variational posterior is typically much faster than running MCMC, allowing substantially larger $B$ (e.g., $B \approx 200$). In our experiments, however, we find that even a relatively small number of bootstrap samples, such as $B = 5$, is often sufficient to obtain robust and well-calibrated uncertainty quantification.

\subsection{Uncertainty quantification for a multivariate Gaussian}

Using the same toy example as in Section~\ref{sec-toy}, we generate two-dimensional Gaussian data $X_1,\dots,X_n$, where
\[
    X_i \sim N(\mu, \Sigma), \quad
    \mu = (-1, 1)^\top,\quad
    \Sigma =
    \begin{pmatrix}
        1 & 0.5 \\
        0.5 & 1
    \end{pmatrix}.
\]
We vary the number of bootstrap replicates $B$ and the sample size $n$ to assess the effectiveness of uncertainty quantification under variational bagging. Specifically, we take
\[
    B \in \{5, 10, 20, 30, 50\}, \quad
    n \in \{50, 100, 200, 300, 500, 1000\}.
\]
Because Corollary~\ref{cor-vbbag1} is an asymptotic result, our goal here is to identify practically reasonable choices of $B$ and $n$ for which the asymptotic behavior is already well approximated.

We place a conjugate prior on $\mu$ and approximate the posterior using three methods: (i) Hamiltonian Monte Carlo (HMC) with 2 chains and 2000 posterior draws, (ii) mean-field variational Bayes (MFVB), and (iii) variational bagging based on MFVB. For variational bagging, we compute MFVB for each bootstrap dataset and average the corresponding variational posteriors; runs in which the MFVB algorithm fails to converge are discarded.

\begin{figure}[htbp!]
    \centering
    \includegraphics[width=1\linewidth]{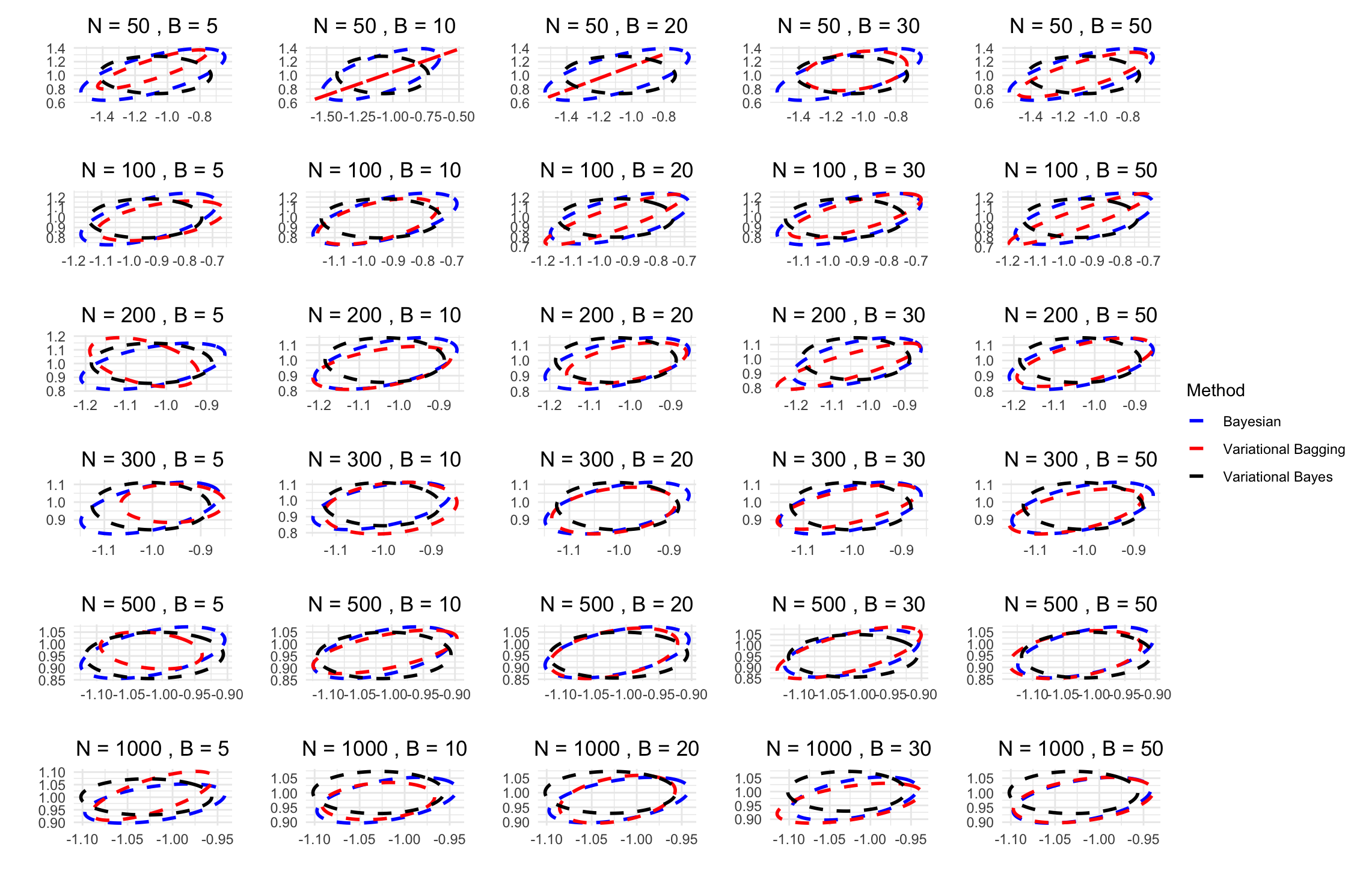}
    \caption{$95\%$ posterior credible regions for the Gaussian mean under HMC, MFVB, and variational bagging, for varying sample size $n$ and number of bootstrap replicates $B$.}
    \label{fig:2d-gauss-cmp}
\end{figure}

From Figure~\ref{fig:2d-gauss-cmp}, we observe that when the number of bootstrap replicates $B$ is at least $30$, the credible regions obtained from variational bagging closely match those from the full Bayesian posterior, even for relatively small sample sizes. In particular, the results suggest that our theoretical guarantees are already informative for sample sizes as small as $n = 50$, and that a moderate number of bootstrap replicates (around $B \ge 30$) suffices to capture the improved uncertainty quantification predicted by our theory.

\subsection{Gaussian mixture model}

This simulation study considers the Gaussian mixture model
\begin{align*}
    X_i \sim \sum_{k=1}^K \pi_k N(\mu_k, \sigma_k^2), \quad i = 1,\ldots, n.
\end{align*}
For the prior, we use
\[
    \pi \sim \text{Dirichlet}(\alpha), \quad
    \mu_k \mid \sigma_k^2 \sim N(0, \nu_0 \sigma_k^2), \quad
    \sigma_k^2 \sim IG(a,b).
\]

We generate data from heavy-tailed mixture distributions such as $t$ mixtures and double-exponential mixtures, but fit a Gaussian mixture model to these data. We also fit the same data under the correctly specified (true) model to evaluate the performance of variational bagging.

For the full Bayesian posterior, we use \emph{Stan} (R interface), based on Hamiltonian Monte Carlo (HMC), with 2000 iterations and 1000 burn-in per chain. For variational Bayes, we implement a CAVI algorithm. For bagging, we choose $B = 50$ bootstrap replicates and set the bootstrap sample size to be $\hat{M}^*$ from \eqref{eq-4}. To accelerate computation, variational fits for different bootstrap samples are run in parallel.

\begin{figure}[htbp!]
    \centering
    \begin{subfigure}[b]{0.3\textwidth}
        \centering
        \includegraphics[width=\textwidth]{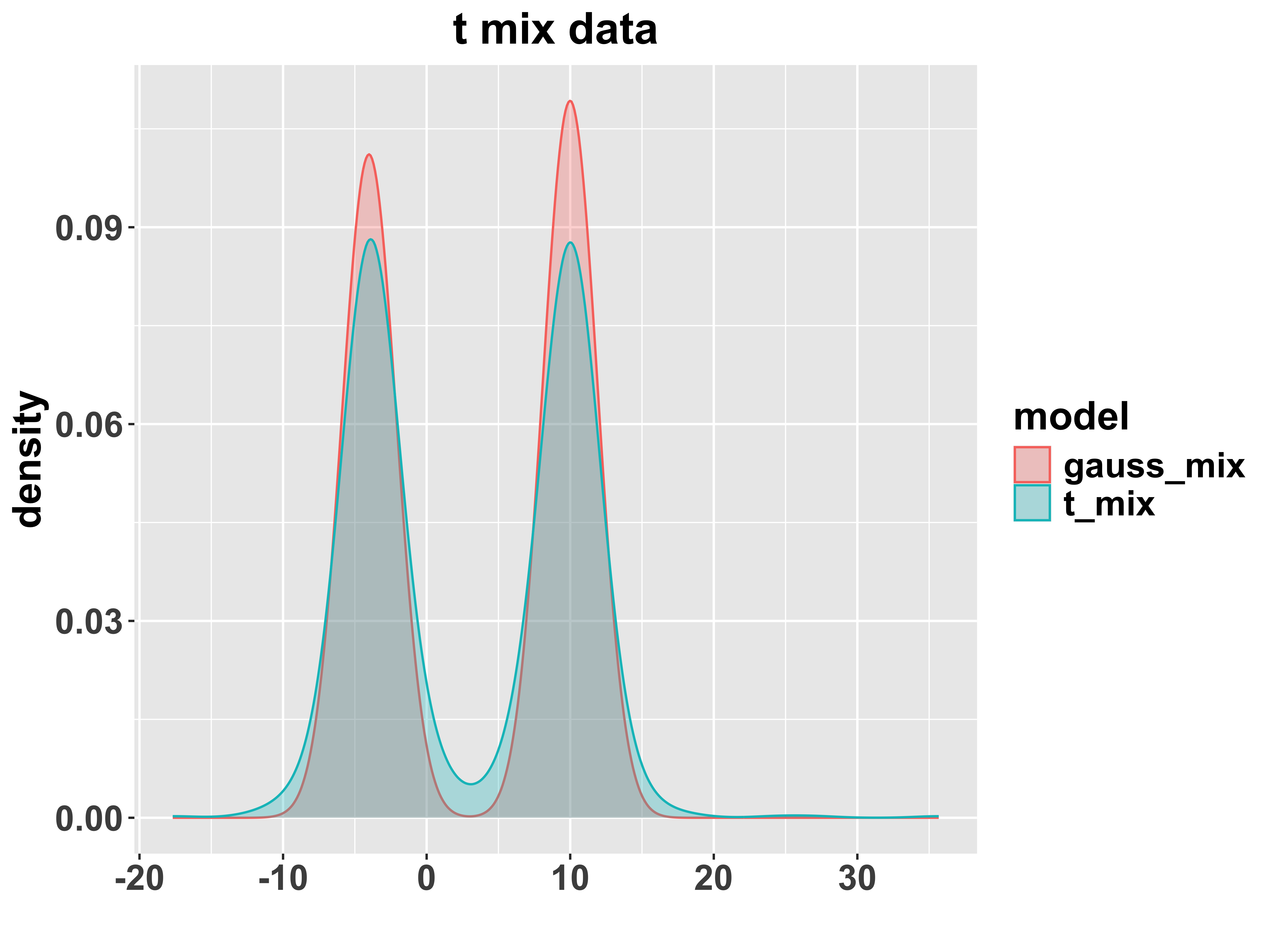}
        \caption{Data}
        \label{fig:t_mix_data}
    \end{subfigure}
    \hfill
    \begin{subfigure}[b]{0.3\textwidth}
        \centering
        \includegraphics[width=\textwidth]{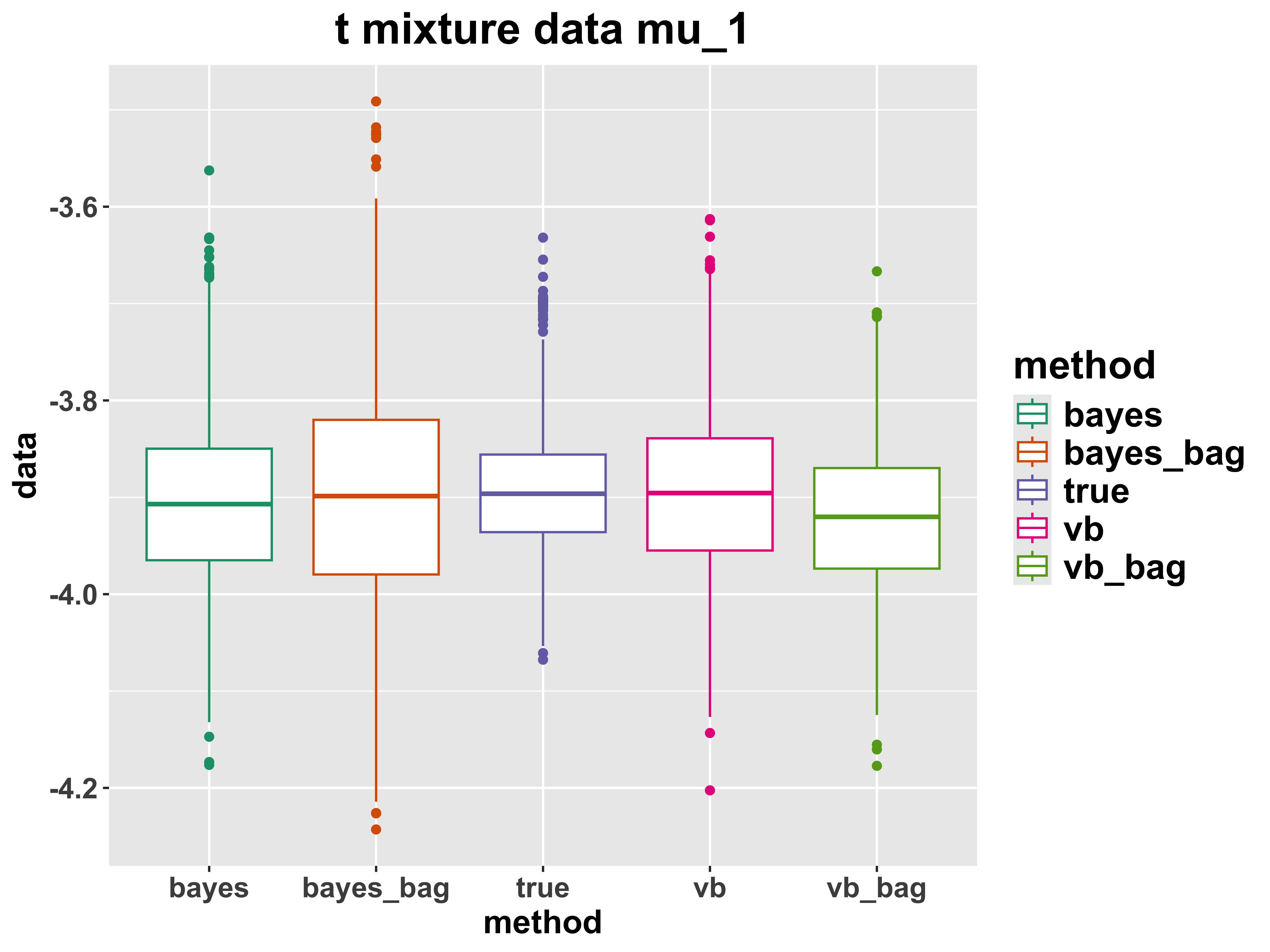}
        \caption{$\mu_1$}
        \label{fig:t_mix_mu_1}
    \end{subfigure}
    \hfill
    \begin{subfigure}[b]{0.3\textwidth}
        \centering
        \includegraphics[width=\textwidth]{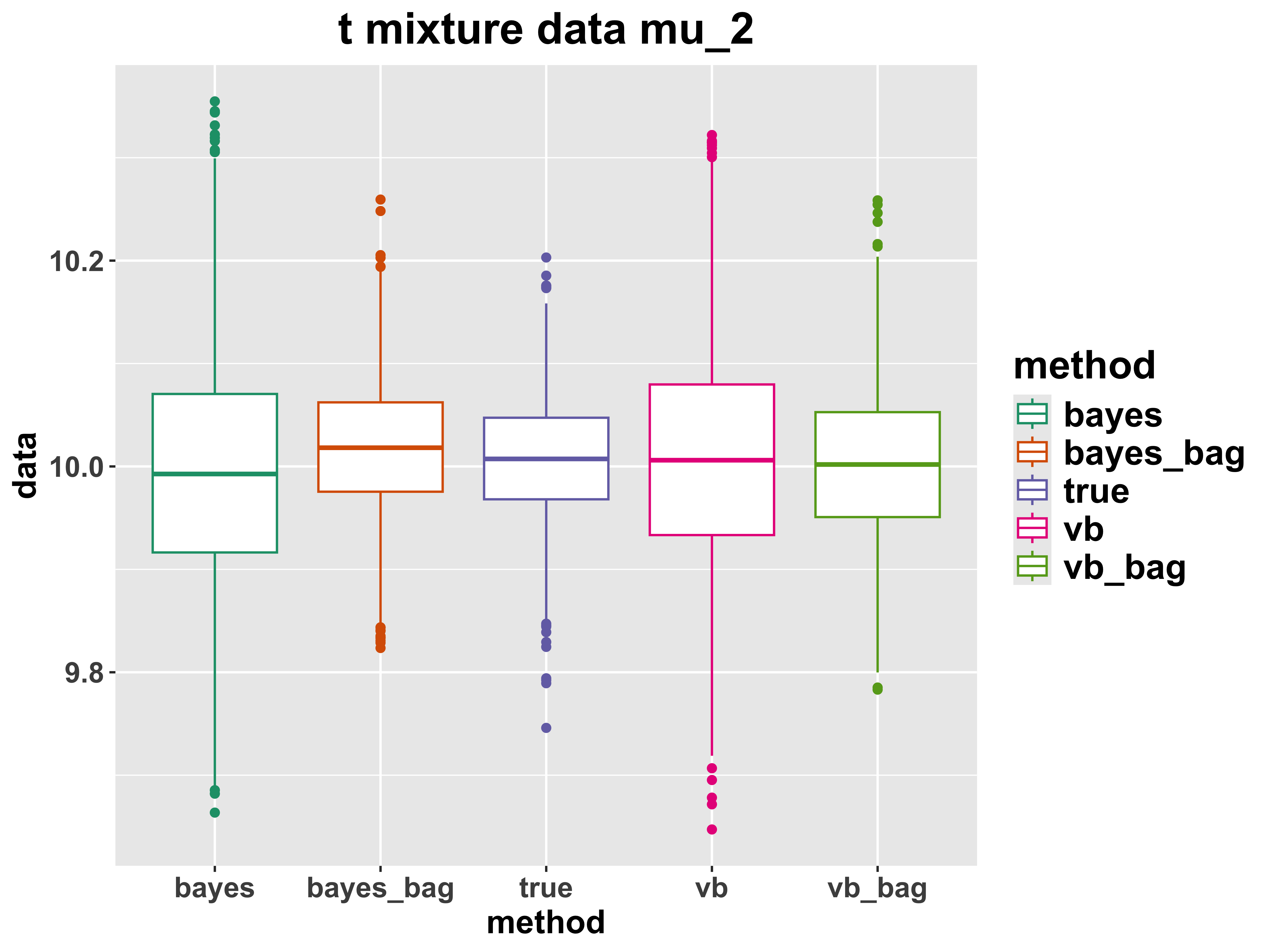}
        \caption{$\mu_2$}
        \label{fig:t_mix_mu_2}
    \end{subfigure}
    \caption{Gaussian mixture fit to $t$-mixture data.}
    \label{fig:t_mix}
\end{figure}

\begin{figure}[htbp!]
    \centering
    \begin{subfigure}[b]{0.3\textwidth}
        \centering
        \includegraphics[width=\textwidth]{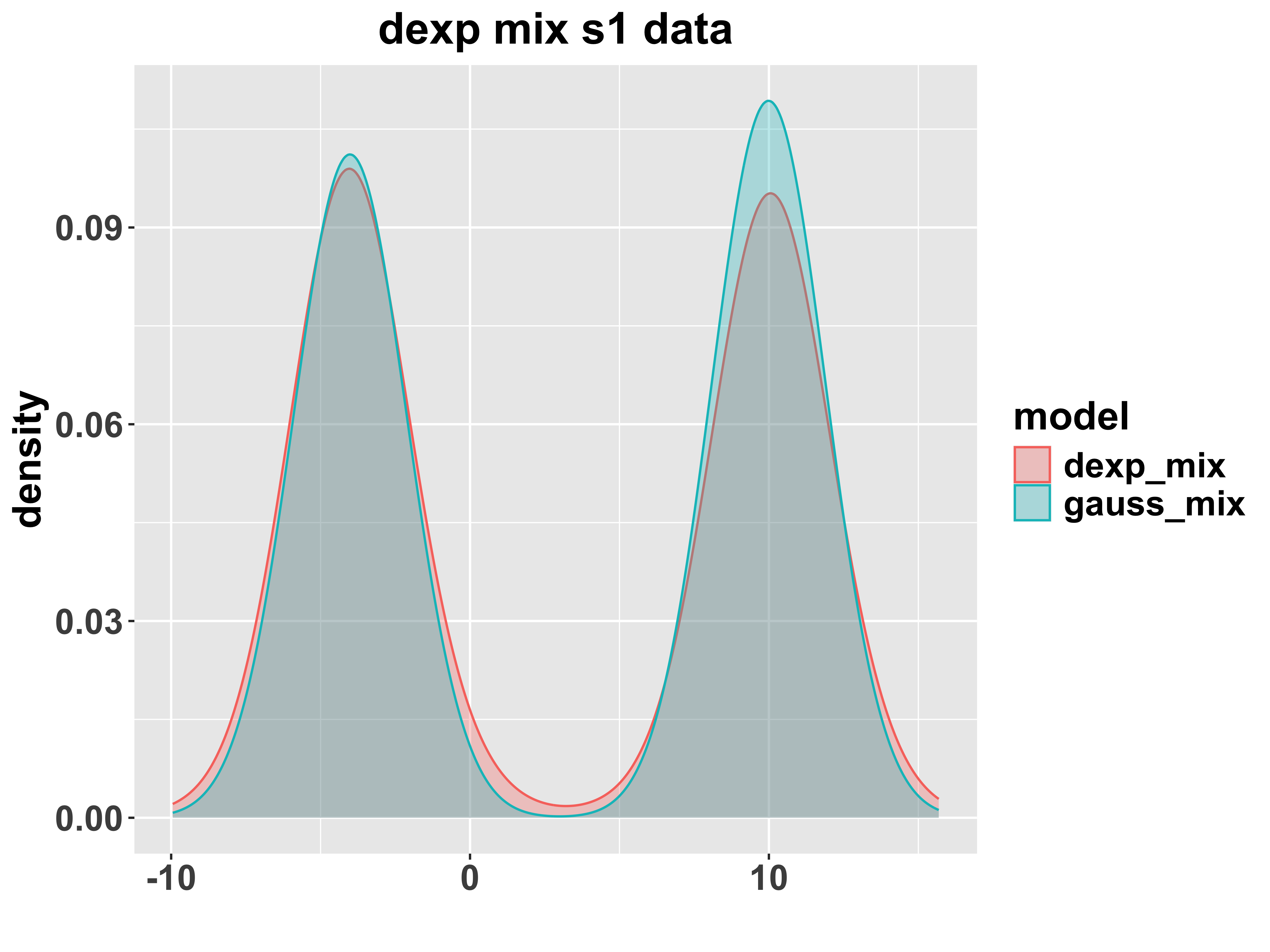}
        \caption{Data}
        \label{fig:dexp_mix_data}
    \end{subfigure}
    \hfill
    \begin{subfigure}[b]{0.3\textwidth}
        \centering
        \includegraphics[width=\textwidth]{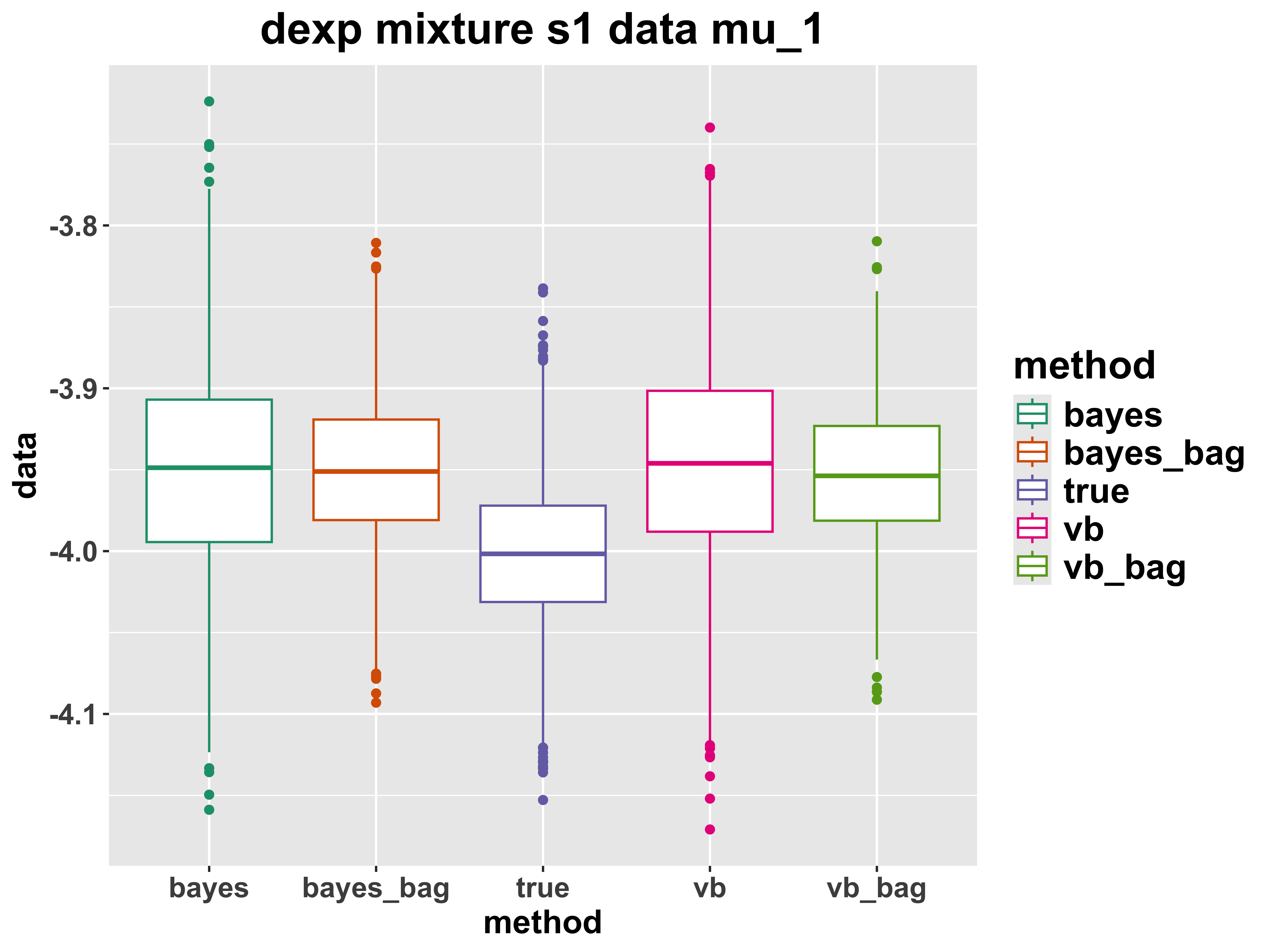}
        \caption{$\mu_1$}
        \label{fig:dexp_mix_mu_1}
    \end{subfigure}
    \hfill
    \begin{subfigure}[b]{0.3\textwidth}
        \centering
        \includegraphics[width=\textwidth]{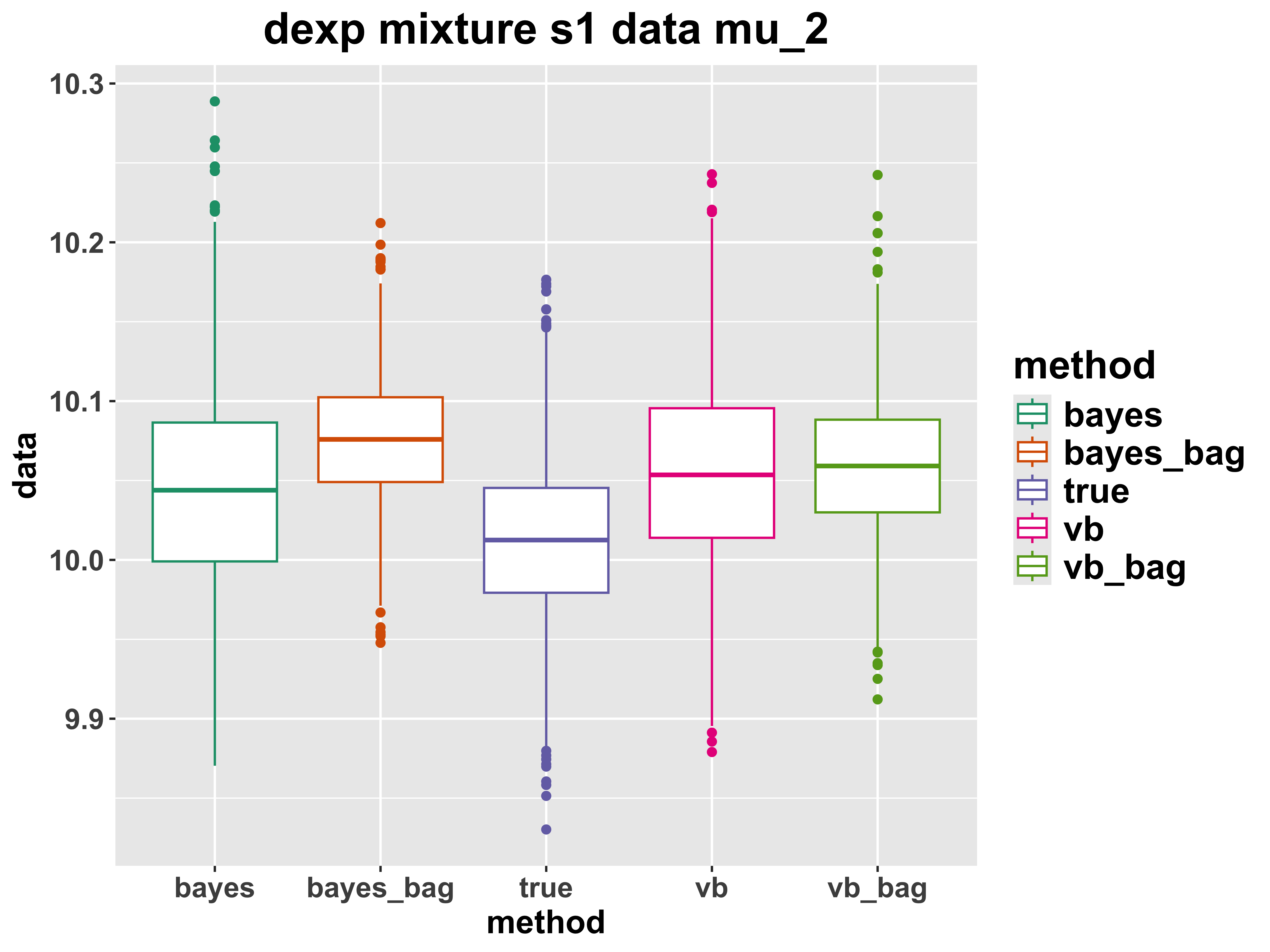}
        \caption{$\mu_2$}
        \label{fig:dexp_mix_mu_2}
    \end{subfigure}
    \caption{Gaussian mixture fit to data from a double-exponential mixture model.}
    \label{fig:dexp_mix_s1}
\end{figure}

\begin{figure}[htbp!]
    \centering
    \begin{subfigure}[b]{0.3\textwidth}
        \centering
        \includegraphics[width=\textwidth]{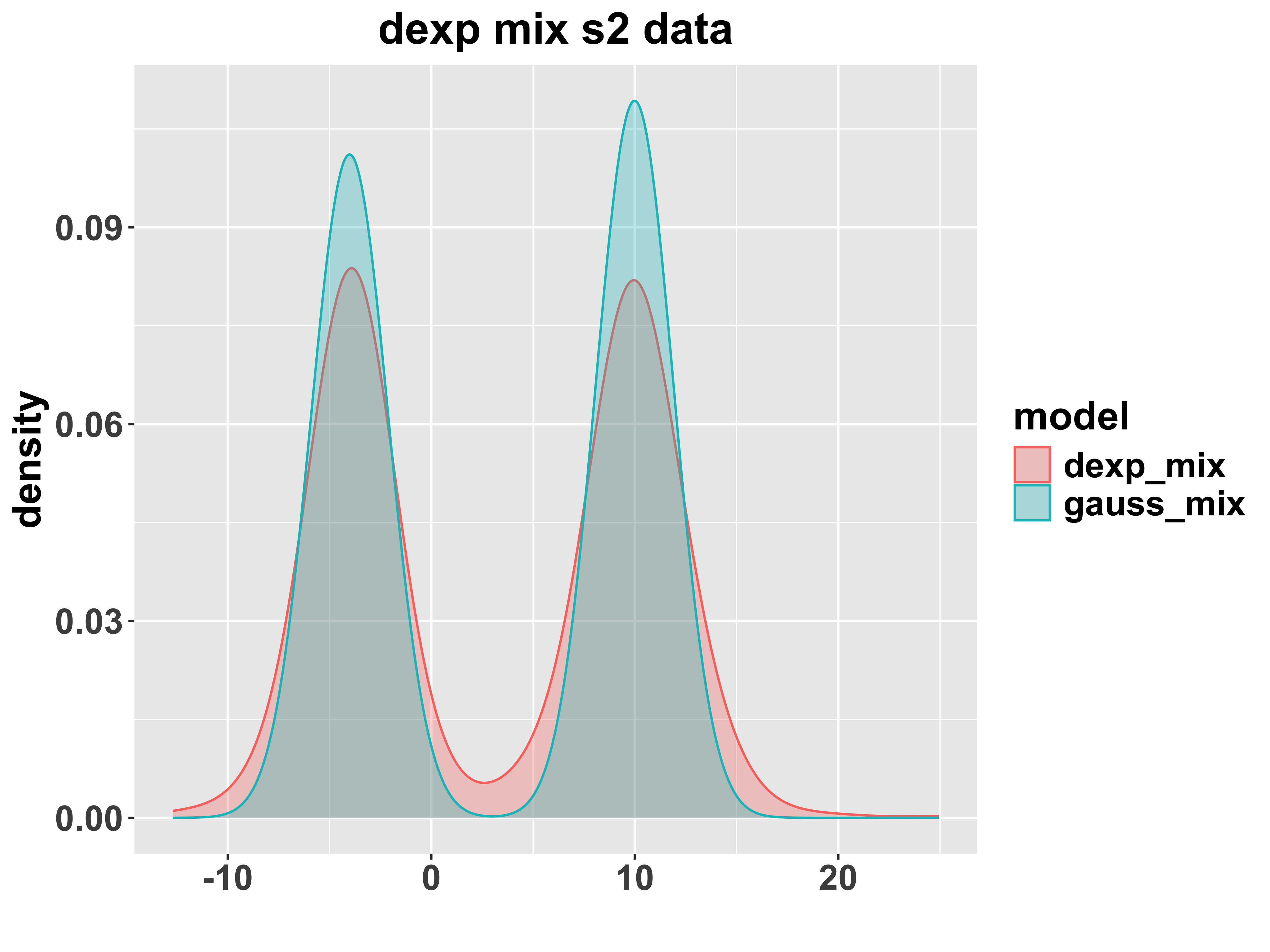}
        \caption{Data}
        \label{fig:dexp_mix_s2_data}
    \end{subfigure}
    \hfill
    \begin{subfigure}[b]{0.3\textwidth}
        \centering
        \includegraphics[width=\textwidth]{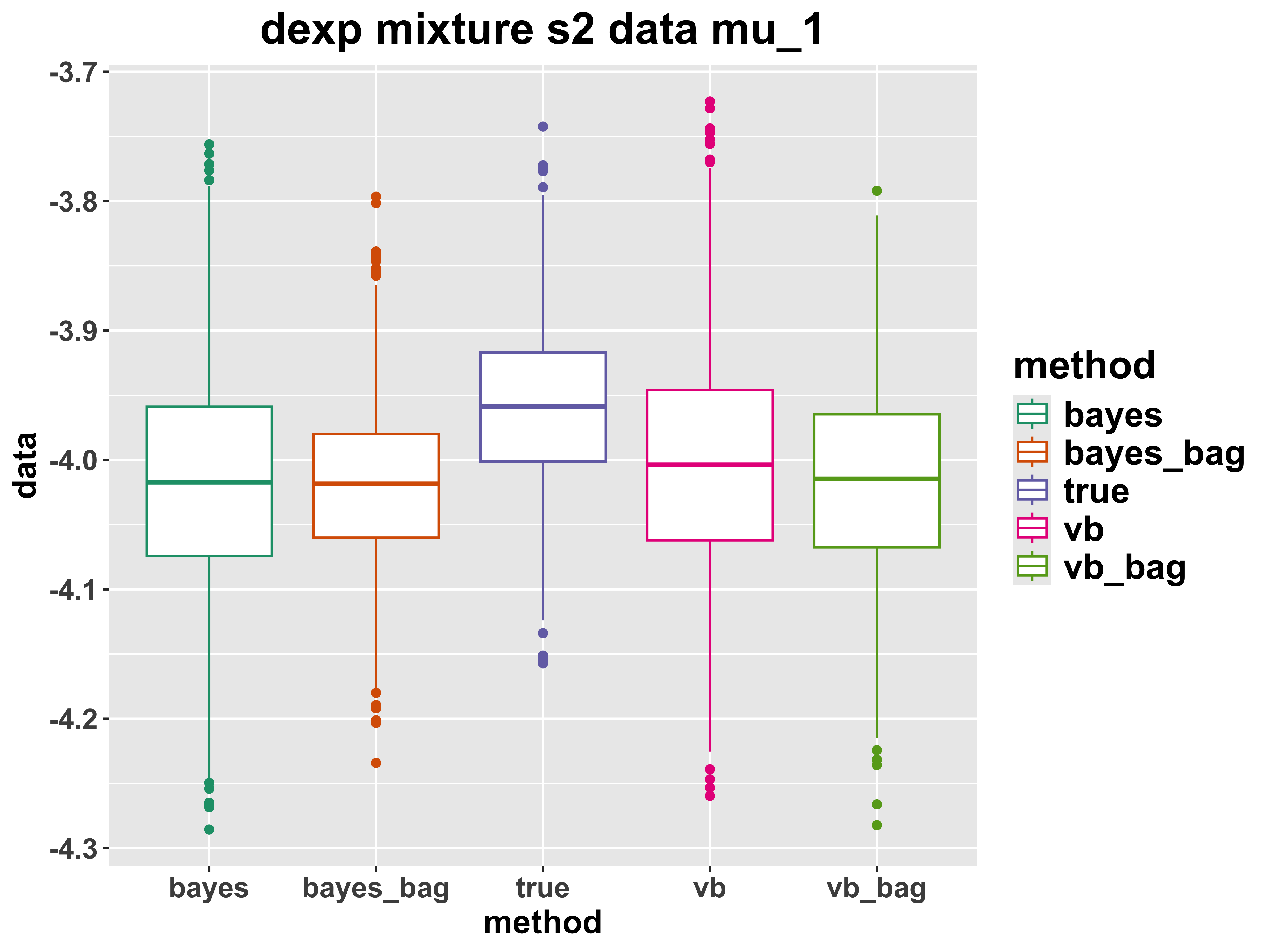}
        \caption{$\mu_1$}
        \label{fig:dexp_mix_s2_mu_1}
    \end{subfigure}
    \hfill
    \begin{subfigure}[b]{0.3\textwidth}
        \centering
        \includegraphics[width=\textwidth]{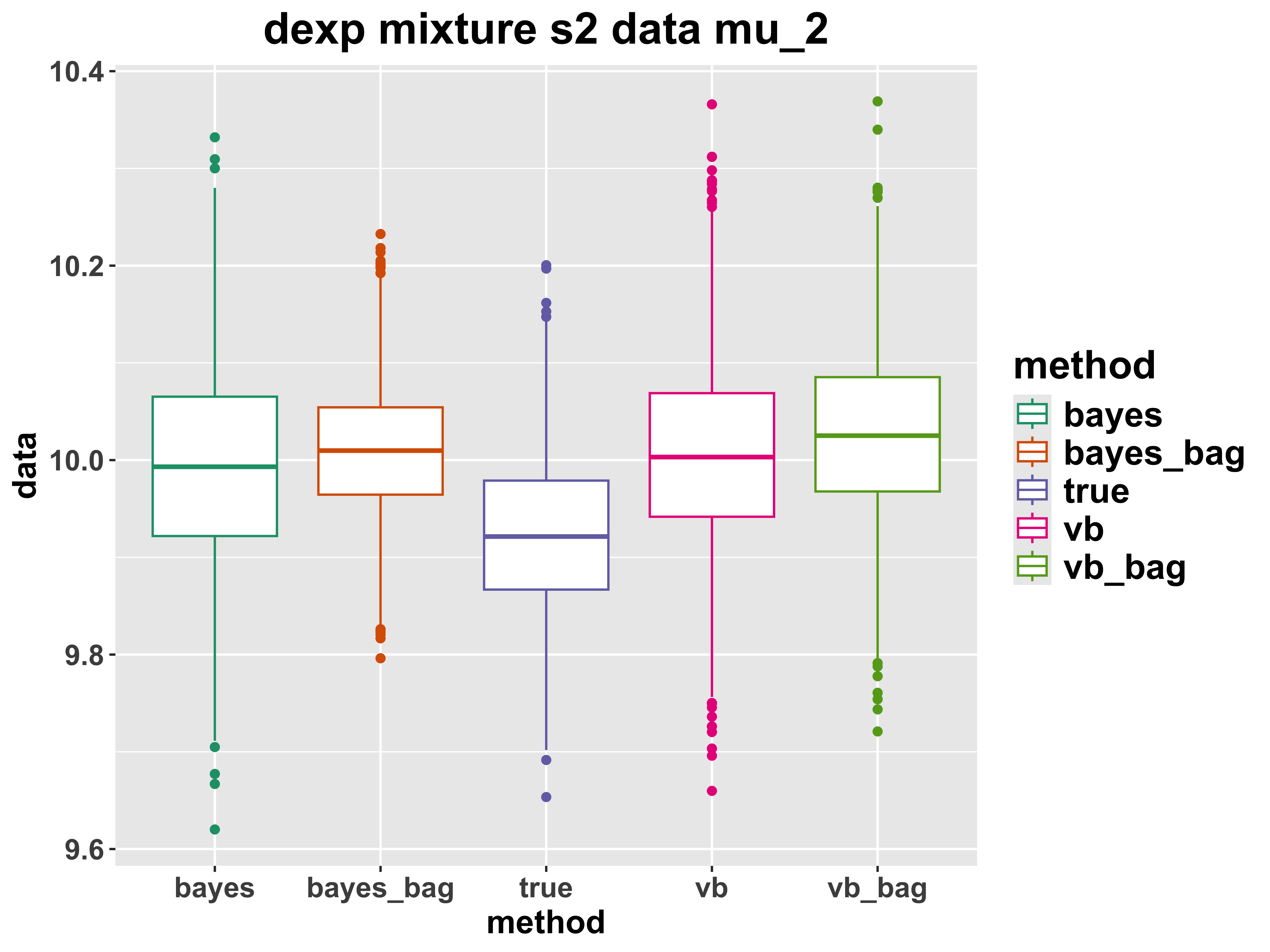}
        \caption{$\mu_2$}
        \label{fig:dexp_mix_s2_mu_2}
    \end{subfigure}
    \caption{Gaussian mixture fit to double-exponential mixture data with larger variance.}
    \label{fig:dexp_mix_s2}
\end{figure}

Figure~\ref{fig:t_mix_data} shows an example where the data are generated from a $t$-mixture distribution but fitted with a Gaussian mixture model, leading to heavier tails in the data than assumed by the working model. Figures~\ref{fig:t_mix_mu_1} and~\ref{fig:t_mix_mu_2} show results for the component means $\mu_1$ and $\mu_2$ under several approaches, including variational bagging.

When using standard Bayesian methods and mean-field variational Bayes, the posteriors tend to be reasonably centered around the true means but exhibit poorly calibrated uncertainty, with credible intervals that do not match the empirical variability induced by the heavy-tailed data. In contrast, our Variational Bagging procedure corrects for this misspecification effect and yields posterior coverage much closer to the empirical coverage. Here, “robustness’’ means that, relative to standard Bayesian or VB fits, the bagged procedures produce uncertainty statements that better reflect the true distribution of the estimators under model misspecification.

This is visually summarized in Figures~\ref{fig:t_mix_mu_1} and~\ref{fig:t_mix_mu_2}. The green boxes represent the empirical interquartile range (Q1 to Q3) of the true sampling distribution. The standard methods (olive and blue boxes) produce interquartile ranges that are noticeably misaligned with this target, either too wide or too narrow depending on the setting. By contrast, the bagging procedures (red boxes for BayesBag and purple boxes for variational bagging) yields Q1–Q3 ranges more closely aligned with the green boxes, demonstrating improved accuracy and robustness in the presence of model misspecification.

A similar phenomenon is observed when the data are generated from double-exponential mixture distributions, as shown in Figures~\ref{fig:dexp_mix_s1} and~\ref{fig:dexp_mix_s2}. In these settings as well, the bagging-based procedures effectively mitigate the impact of tail misspecification and produce uncertainty quantification that more faithfully reflects the underlying data-generating process.

Consistent with our theoretical results, the outcomes of variational bagging closely mirror those of Bayesian bagging in these simulations. This indicates that variational bagging can inherit the robustness properties of BayesBag while being substantially more computationally efficient, thereby offering a practical and effective tool for robust uncertainty quantification in mixture models and beyond.

\subsection{Sparse linear regression model}

In this section, we consider a sparse linear regression model with a spike-and-slab prior, a popular and effective prior for variable selection. The model is
\begin{align*}
    Y \mid X, \Gamma, \beta, \sigma^2 &\sim N(X \Gamma \beta, \sigma^2 I_n), \\
    \beta &\sim N(0, \sigma_\beta^2 I_q), \\
    \sigma^2 &\sim \text{InvGamma}(A,B), \\
    \gamma_i &\overset{i.i.d.}{\sim} \text{Bernoulli}(p), \quad
    \Gamma = \mathrm{diag}(\gamma_1,\ldots,\gamma_q).
\end{align*}

We simulate data from several models and fit each using this sparse linear regression specification. Unless otherwise noted, the general settings are: sample size $n = 1000$, hyperparameters $A = B = 0.1$, $\sigma_\beta^2 = 10$, and $p = 0.5$. For standard Bayesian inference, we obtain 2000 MCMC samples via Gibbs sampling. For variational inference, we use Algorithm~1 of \citet{ormerod2017variational}. For bagging, we take $B = 50$ bootstrap samples and set the bootstrap sample size to $M = \hat{M}_\infty$ in \eqref{eq-4}).

To compare methods, we focus on estimation accuracy for the regression coefficients. Let $\beta_{\mathrm{OLS}}$ be the ordinary least squares estimator (when defined), and let $\hat{\beta}$ denote a point estimate from each method (e.g., posterior mean). We compute the relative squared error (RSE)
\[
    \mathrm{RSE}
    = \frac{\|\hat{\beta} - \beta_{\mathrm{OLS}}\|^2}{\|\beta_{\mathrm{OLS}}\|^2},
\]
and compare RSEs across standard Bayes, BayesBag, variational Bayes (VB), and Variational Bagging.

We consider the following four scenarios:
\begin{itemize}
    \item Scenario 1 (S1): 10 covariates, $n = 1000$.
    \item Scenario 2 (S2): 10 covariates, $n = 2000$.
    \item Scenario 3 (S3): 20 covariates, $n = 1000$.
    \item Scenario 4 (S4): 20 covariates, $n = 2000$.
\end{itemize}

Tables~1 and~2 report RSEs under Gaussian and Student-$t$ errors, respectively.

\begin{table}[htbp!]
    \centering
    \begin{tabular}{ |c|c|c|c|c| } 
    \hline
    Scenario & Bayes & BayesBag  & VB & VB Bagging       \\
    \hline
    S1 & 3.16e-4 & 2.17e-4 & 1.24e-4 & \textbf{9.67e-5} \\
    S2 & 3.04e-4 & \textbf{2.15e-4} & 4.09e-4 & 3.63e-4 \\
    S3 & 8.82e-4 & 3.33e-4 & 1.65e-4 & \textbf{1.39e-4} \\
    S4 & 2.50e-5 & 2.37e-5 & 3.42e-5 & \textbf{2.15e-5} \\
    \hline
    \end{tabular}
    \caption{RSEs of four methods for sparse linear regression with Gaussian errors.}
\end{table}

\begin{table}[htbp!]
    \centering
    \begin{tabular}{ |c|c|c|c|c| } 
    \hline
    Scenario & Bayes & BayesBag  & VB & VB bagging      \\
    \hline
    S1 & 6.55e-4 & \textbf{3.34e-4} & 5.16e-4 & 4.14e-4 \\
    S2 & 9.73e-4 & 5.10e-4 & 1.97e-3 & \textbf{1.54e-3} \\
    S3 & 1.35e-3 & 7.80e-4 & 2.81e-4 & \textbf{2.41e-4} \\
    S4 & \textbf{6.68e-4} & 7.48e-4 & 1.15e-3 & 7.57e-4 \\
    \hline
    \end{tabular}
    \caption{RSEs of four methods for sparse linear regression with Student-$t$ errors.}
\end{table}

Across all scenarios, the bagging regimes (BayesBag and Variational Bagging) generally achieve lower RSEs than their non-bagged counterparts, highlighting the robustness benefits of bagging. Under Gaussian errors, VB bagging  often produces the most accurate results, slightly improving upon both Bayes and standard VB. Under Student-$t$ errors, where the likelihood is misspecified, the gains from bagging are even more pronounced: BayesBag consistently improves on Bayes, and Variational Bagging improves on standard VB in three of the four scenarios.

We also observe that VB can occasionally outperform standard Bayesian inference. This is likely due to Monte Carlo error and limited MCMC exploration in the Gibbs sampler, especially in higher-dimensional settings. Overall, these results indicate that bagging can enhance the accuracy and stability of both Bayesian and variational approaches for sparse linear regression, while retaining the computational advantages of VB.

%\newpage
\subsection{Deep learning model for prediction}

We implement variational bagging for prediction via
\begin{equation*}
    q^{\textup{bvB}}(X_{\text{new}} \mid X_{1:n})
    = \frac{1}{B} \sum_{b=1}^B q\bigl(X_{\text{new}} \mid X^{*(b)}_{1:M}\bigr),
\end{equation*}
where $X^{*(b)}_{1:M}$ denotes the $b$-th bootstrap sample. %For prediction, we choose a conservative bootstrap sample size $M = 2n$. 
From Section~\ref{sec:boot_size}, the resulting asymptotic covariance for the bagged variational posterior has the form
\[
    \frac{1}{2}
    \bigl[(\tilde{V}_{\textsc{vb}}^0)^{-1}
          + (V_{\textsc{vb}}^0)^{-1}
            D_{\textsc{vb}}^0
            (V_{\textsc{vb}}^0)^{-1}\bigr],
\]
which can be viewed as a compromise between the model-based covariance $(\tilde{V}_{\textsc{vb}}^0)^{-1}$ and the sandwich covariance $(V_{\textsc{vb}}^0)^{-1} D_{\textsc{vb}}^0 (V_{\textsc{vb}}^0)^{-1}$. Even with $M = 2n$, we still recover half of the off-diagonal (sandwich) contribution through the term
\(
    \tfrac{1}{2}
    (V_{\textsc{vb}}^0)^{-1} D_{\textsc{vb}}^0 (V_{\textsc{vb}}^0)^{-1}.
\)

We apply fully-connected deep neural networks (DNNs) for regression and assess predictive uncertainty. Motivated by the fact that such DNNs can achieve near-optimal nonparametric rates \cite{ohn2024adaptive, kohler2021rate}, we investigate three regression settings:
\begin{itemize}
    \item simple linear regression,
    \item nonlinear regression,
    \item multivariate linear regression.
\end{itemize}

\begin{figure}[htbp!]
    \centering
    \begin{subfigure}[b]{0.45\textwidth}
        \centering
        \includegraphics[width=\textwidth]{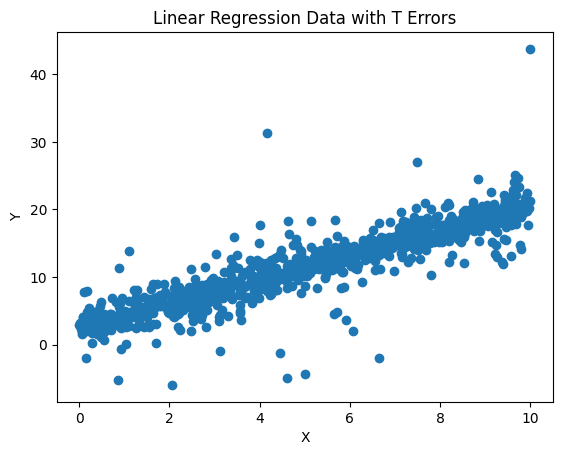}
    \end{subfigure}
    \hfill
    \begin{subfigure}[b]{0.45\textwidth}
        \centering
        \includegraphics[width=\textwidth]{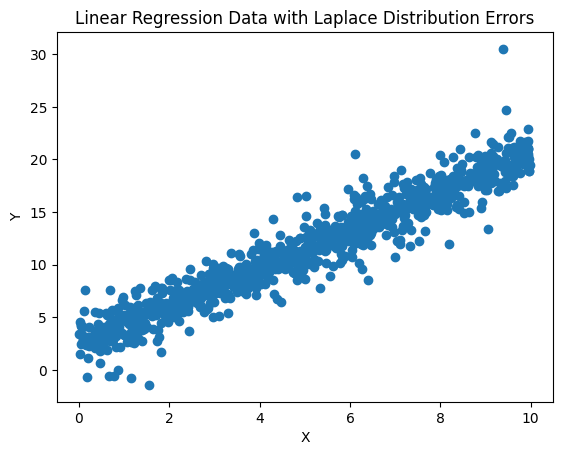}
    \end{subfigure}
    \caption{Regression data from a linear model with $t$ and Laplace errors.}
    \label{fig:linear_reg}
\end{figure}

\begin{figure}[htbp!]
    \centering
    \begin{subfigure}[b]{0.45\textwidth}
        \centering
        \includegraphics[width=\textwidth]{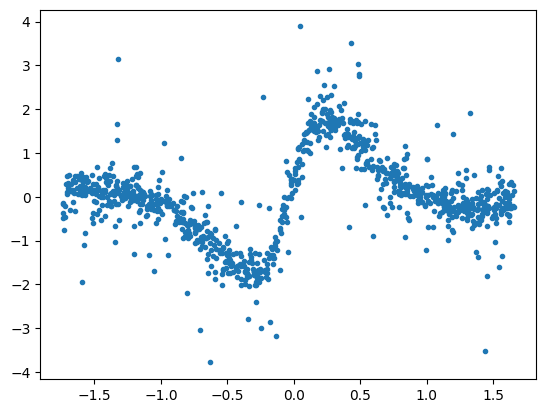}
    \end{subfigure}
    \hfill
    \begin{subfigure}[b]{0.45\textwidth}
        \centering
        \includegraphics[width=\textwidth]{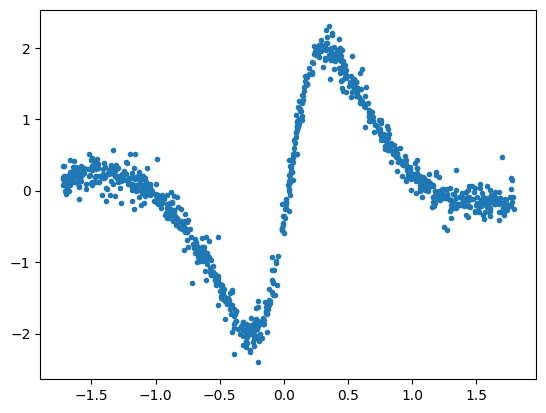}
    \end{subfigure}
    \caption{Regression data from a nonlinear model with $t$ and Laplace errors.}
    \label{fig:nonlinear_reg}
\end{figure}

For each setting, data is generated with error terms ($\epsilon$) drawn either from a Student-$t$ distribution or a Laplace distribution, but the working regression model assumes Gaussian errors, thereby introducing deliberate model misspecification.

\begin{itemize}
    \item \textbf{Simple linear regression.}
    Data are generated as
    \[
        Y = 2.5 + 1.8 X + \epsilon,
    \]
    as illustrated in Figure~\ref{fig:linear_reg}. For this setting, we use a DNN with architecture $[1, 10, 1]$ (one input, one hidden layer with 10 units, and one output), which is sufficient to capture the underlying linear relationship.

    \item \textbf{Nonlinear regression.}
    Data follow
    \[
        Y = \frac{\sin(X)}{1 + X^2} + \epsilon,
    \]
    as shown in Figure~\ref{fig:nonlinear_reg}. To capture the nonlinear structure, we employ a deeper and wider DNN with architecture $[1, 32, 64, 1]$, which provides increased capacity to approximate the nonlinear regression function.

    \item \textbf{Multivariate linear regression.}
    We generate responses via
    \[
        Y = X \beta + \epsilon,
    \]
    where $\beta_i \overset{i.i.d.}{\sim} N(0,1)$ and there are 7 predictors (so $X \in \mathbb{R}^{n \times 7}$). For this model, we use a larger DNN with architecture $[7, 128, 64, 32, 1]$ to handle the increased input dimension and capture interactions among predictors.
\end{itemize}
%In all cases, the noise term $\epsilon$ is either drawn from a standard $t$ distribution with 2 degrees of freedom or from a Laplace distribution with mean 0 and scale 1.

Our primary evaluation metric is the empirical coverage of $95\%$ predictive intervals on the original data. Specifically, we examine whether the nominal $95\%$ predictive intervals contain approximately $95\%$ of the observed responses, which serves as a diagnostic for the calibration of predictive uncertainty. In the simulation study:
\begin{itemize}
    \item the number of bootstrap replicates is $B = 10$;
    \item the bootstrap sample size is set to $M = 2n$.
\end{itemize}

Model misspecification (Gaussian working errors vs.\ heavy-tailed true errors) tends to produce predictive intervals that are too narrow, leading to under-coverage of the nominal $95\%$ predictive intervals. This effect is visible in the variational Bayes (VB) results. However, as shown in Table~\ref{tab:dnn_coverage}, variational bagging (VB bagging) substantially corrects this under-coverage and yields predictive intervals whose empirical coverage is much closer to the nominal level.

\begin{table}[htbp!]
    \centering
    \begin{tabular}{ |l|c|c| } 
    \hline
    Data setting & VB & VB bagging \\
    \hline
    linear reg + $t$ errors       & 93.8 & \bf{94.6} \\
    linear reg + Laplace errors   & 93.8 & \bf{94.6} \\
    nonlinear reg + $t$ errors    & 93.2 & \bf{95.3} \\
    nonlinear reg + Laplace errors& 92.9 & \bf{95.7} \\
    multivariate reg + $t$ errors & 93.0 & \bf{94.8} \\
    multivariate reg + Laplace errors & 93.5 & \bf{95.0} \\
    \hline
    \end{tabular}
    \caption{Empirical coverage (\%) of nominal $95\%$ predictive intervals for DNN-based regression under VB and variational bagging.}
    \label{tab:dnn_coverage}
\end{table}

Across all regression settings and both error distributions, VB alone yields slightly under-covered predictive intervals (around $93\%$ rather than $95\%$), reflecting the combination of model misspecification and variational under-dispersion. In contrast, variational bagging systematically improves coverage, bringing it close to the nominal $95\%$ level in all cases. These results highlight the robustness of variationla bagging in restoring calibrated predictive uncertainty for deep neural network models, even when the error distribution is misspecified and the underlying regression function is nonlinear or high-dimensional.

\subsection{Bagged variational autoencoder (BVAE)}

In this simulation study, we consider the application of  variational bagging to  a variational autoencoder  (VAE)  model \cite{kingma2014adam}, which we refer to as the bagged variational autoencoder.  Recall in a deep generative model where a $D$-dimensional random variable $X$  is modeled by  $X=\mathbf{f}(Z)+\boldsymbol{\epsilon}$, where $Z$  is some latent variable of dimension $d$, $\epsilon$ represents the error and $\mathbf{f}$ is the generator typically parametrized by a deep neural network. VAE is one of the primary likelihood based training methods for estimating the deep generative model, where $P(Z\mid X)$ is parameterized  by another deep neural network, the so-called encoder network.  For  implementation, we adopt the same algorithm described in \cite{chae2021likelihood}.
To model data as noisy realizations from a distribution on a 1-dimensional manifold, we let 
 $X=\mathbf{f}(Z)+\boldsymbol{\epsilon}$, where $D=2$, $\sigma_*=0$ is the true variance of the residual, and $Z$ is a univariate random variable following a standard normal distribution $N(0,1)$. We examine three different functions for the true generators $\mathbf{f}_*=\left(f_{* 1}, f_{* 2}\right)$ as:\\
    \begin{itemize}
        \item Case 1 (Figure \ref{fig:vae_data_f1}). $f_{* 1}(z)=6(z-0.5), \quad f_{* 2}(z)=0.5(z-2) z(z+2)$.\\
        \item Case 2 (Figure \ref{fig:vae_data_f2}). $f_{* 1}(z)=\cos (2 \pi z), \quad f_{* 2}(z)=\sin (2 \pi z)$.\\
        \item Case 3 (Figure \ref{fig:vae_data_f3}).\\ $\left\{\begin{array}{lll}f_{* 1}(z)=2 \cos (2 \pi z)+1, & f_{* 2}(z)=2 \sin (2 \pi z)+0.4, & \text { if } z>0.5 \\ f_{* 1}(z)=2 \cos (2 \pi z)-1, & f_{* 2}(z)=2 \sin (2 \pi z)-0.4, & \text { otherwise }\end{array}\right.$  
    \end{itemize}

\begin{figure}[htbp!]
     \centering
     \begin{subfigure}[b]{0.3\textwidth}
         \centering
         \includegraphics[width=\textwidth]{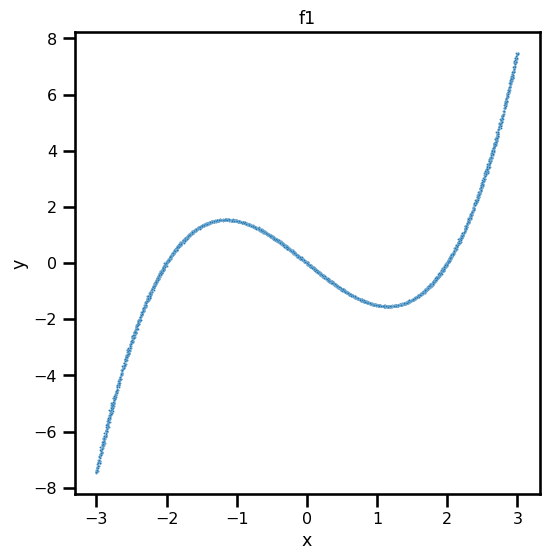}
         \caption{case 1}
         \label{fig:vae_data_f1}

     \end{subfigure}
     \hfill
     \begin{subfigure}[b]{0.3\textwidth}
         \centering
         \includegraphics[width=\textwidth]{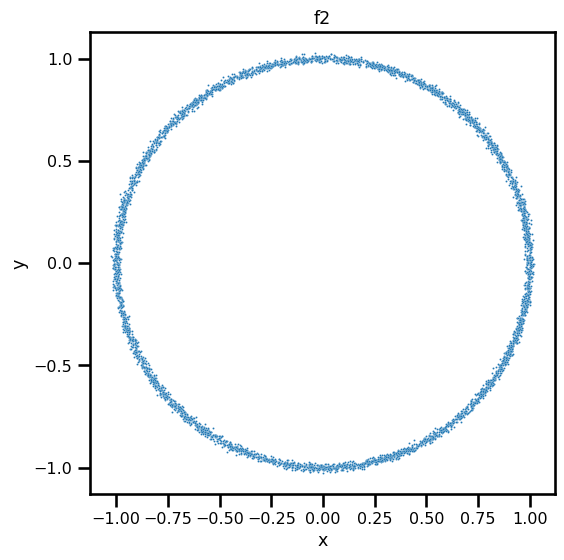}
         \caption{case 2}
         \label{fig:vae_data_f2}

     \end{subfigure}
     \hfill
     \begin{subfigure}[b]{0.3\textwidth}
         \centering
         \includegraphics[width=\textwidth]{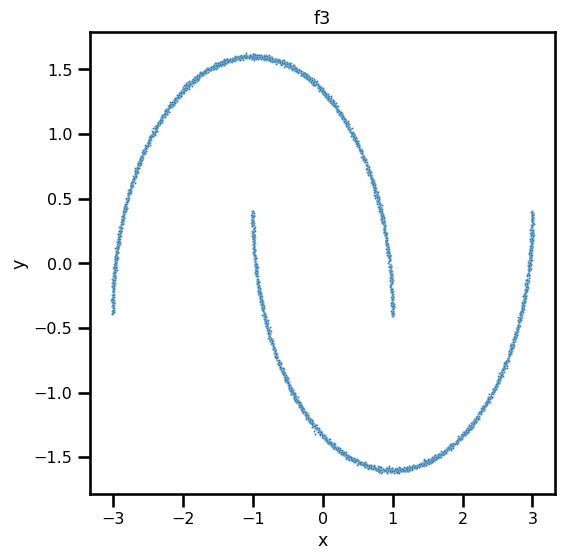}
         \caption{case 3}
         \label{fig:vae_data_f3}
     \end{subfigure}
     \caption{Simulated data concentrated on different 1-dimensional manifolds. These data will be analyzed with VAEs with and without bagging.}
     \label{fig:vae_data}
    \end{figure}
\begin{figure}[htbp!]
     \centering
     \begin{subfigure}[b]{0.3\textwidth}
         \centering
         \includegraphics[width=\textwidth]{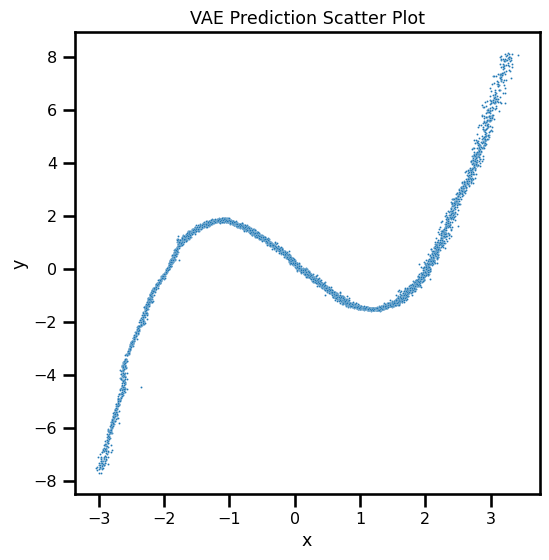}
         \caption{case 1}
         \label{fig:vae_vae_f1}

     \end{subfigure}
     \hfill
     \begin{subfigure}[b]{0.3\textwidth}
         \centering
         \includegraphics[width=\textwidth]{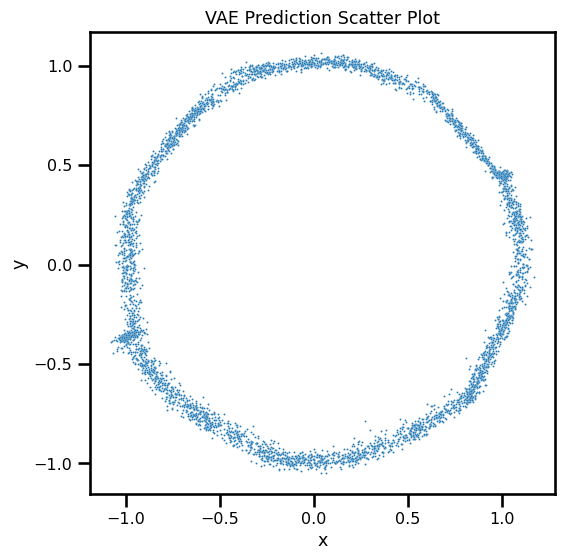}
         \caption{case 2}
         \label{fig:vae_vae_f2}
         
     \end{subfigure}
     \hfill
     \begin{subfigure}[b]{0.3\textwidth}
         \centering
         \includegraphics[width=\textwidth]{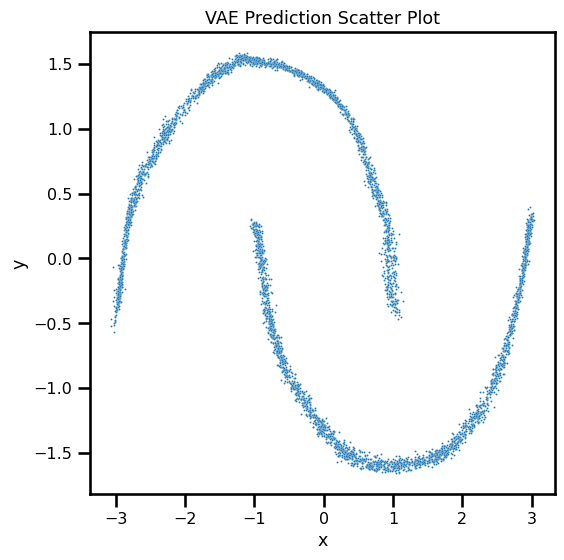}
         \caption{case 3}
         \label{fig:vae_vae_f3}         
     \end{subfigure}
     \caption{Results from apply a usual (non-bagged) VAE to the one-dimensional manifold data from the previous Figure.}
     \label{fig:vae_std}
\end{figure}

\begin{figure}[htbp!]
     \centering
     \begin{subfigure}[b]{0.3\textwidth}
         \centering
         \includegraphics[width=\textwidth]{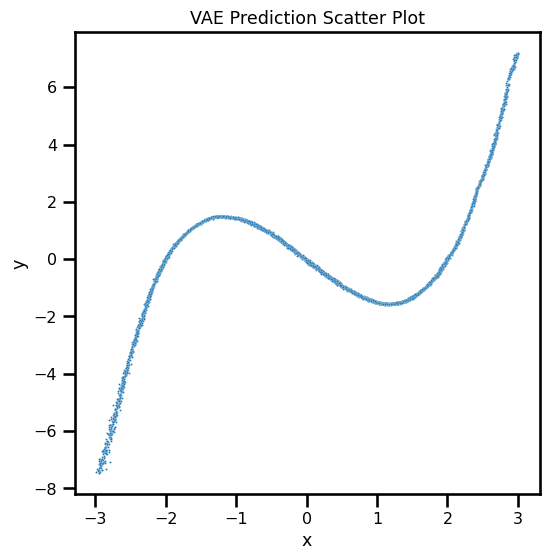}
         \caption{case 1}
         \label{fig:vae_bvae_f1}
     \end{subfigure}
     \hfill
     \begin{subfigure}[b]{0.3\textwidth}
         \centering
         \includegraphics[width=\textwidth]{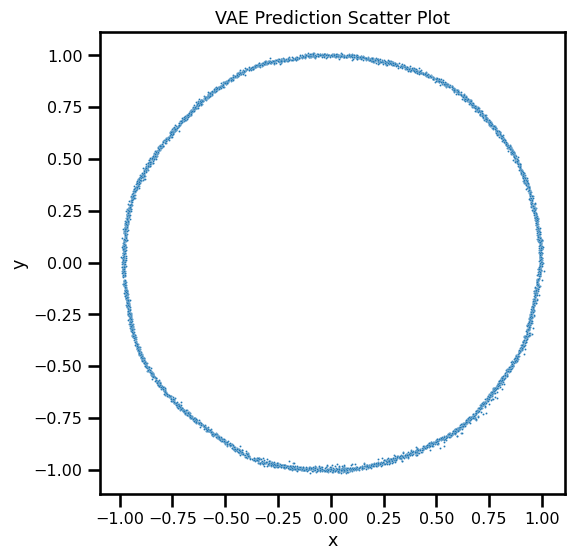}
         \caption{case 2}
         \label{fig:vae_bvae_f2}
     \end{subfigure}
     \hfill
     \begin{subfigure}[b]{0.3\textwidth}
         \centering
         \includegraphics[width=\textwidth]{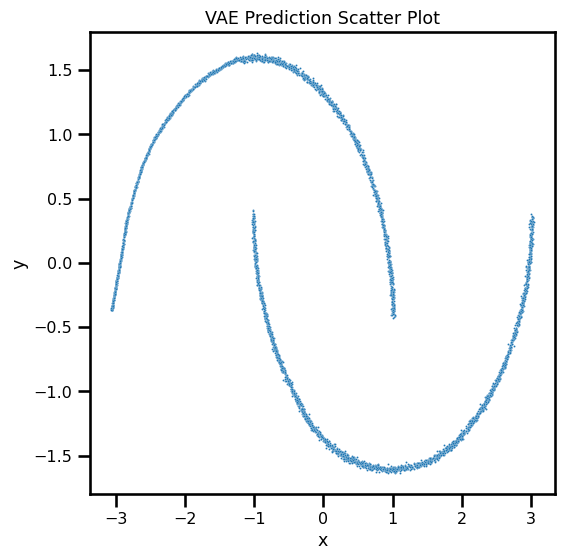}
         \caption{case 3}
         \label{fig:vae_bvae_f3}         
     \end{subfigure}
     \caption{Results of bagged VAE on the simulated one-dimensional manifold data; averaging over bootstrap dataset produces smoother and more accurate estimates}
     \label{fig:vae_bag}
    \end{figure}

Subsequently, we investigate two additional distributions: one on the Swiss roll in Figure \ref{fig:swiss} and another featuring a uniform distribution on a sphere in Figure \ref{fig:sphere}. Both distributions are supported on 2-dimensional manifolds within the ambient space $\mathbb{R}^3$. The Swiss roll distribution represents the distribution of $\mathbf{f}(Z)$, where $Z$ follows a uniform distribution on $(0,1)^2$, and the true generator $\mathbf{f}_*=\left(f_{* 1}, f_{* 2}, f_{* 3}\right):(0,1)^2 \rightarrow \mathbb{R}^3$ is defined as:
$$
\begin{aligned}
& t_1=1.5 \pi\left(1+2 z_1\right), t_2=21 z_2, \\
& f_{* 1}\left(z_1, z_2\right)=t_1 \cos \left(t_1\right), \quad f_{* 2}\left(z_1, z_2\right)=t_2, \quad f_{* 3}\left(z_1, z_2\right)=t_1 \sin \left(t_1\right) .
\end{aligned}
$$
%Similar to the circle, the sphere cannot be covered by a single chart.

We first use the standard VAE with  10 epochs for training.
We generate the figures by sampling from the posterior predictive distribution. We also apply our proposed bagged VAE approach. In all experiments, we set the validation and test sample sizes to 1000, while the training sample is set to 5000.
\begin{figure}[hpbp!]
     \centering
     \begin{subfigure}[b]{0.3\textwidth}
         \centering
         \includegraphics[width=\textwidth]{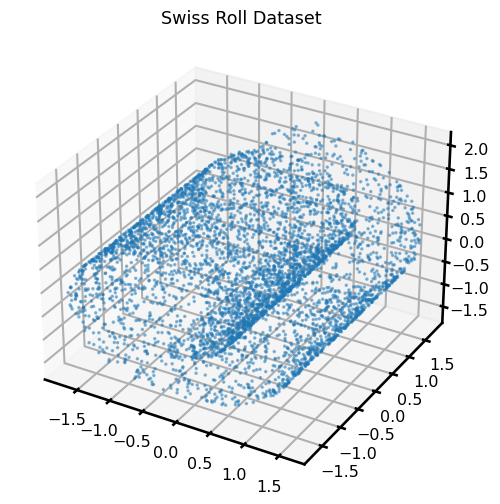}
         \caption{data}
         \label{fig:swiss_data}

     \end{subfigure}
     \hfill
     \begin{subfigure}[b]{0.3\textwidth}
         \centering
         \includegraphics[width=\textwidth]{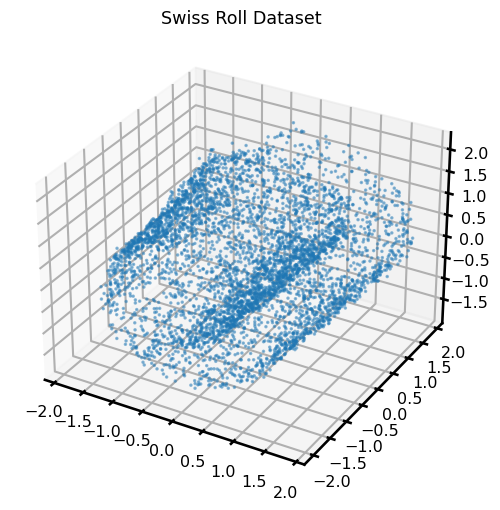}
         \caption{VAE}
         \label{fig:swiss_vae}

     \end{subfigure}
     \hfill
     \begin{subfigure}[b]{0.3\textwidth}
         \centering
         \includegraphics[width=\textwidth]{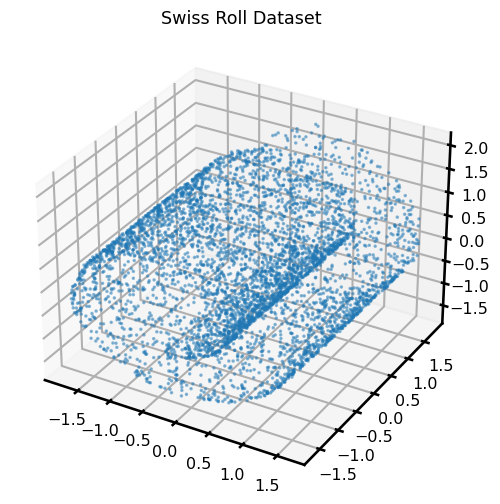}
         \caption{BVAE}
         \label{fig:swiss_bvae}
     \end{subfigure}
     \caption{Simulated data on a 2d swiss roll embedded in 3d are shown in panel (a) with the results of VAE in panel (b) and those of BVAE in panel (c).}
     \label{fig:swiss}
\end{figure}

\begin{figure}[htbp!]
     \centering
     \begin{subfigure}[b]{0.3\textwidth}
         \centering
         \includegraphics[width=\textwidth]{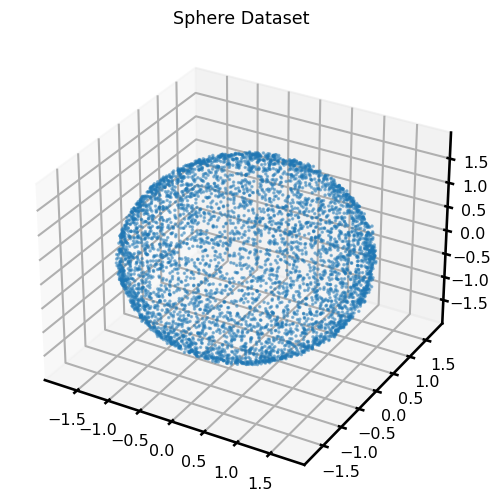}
         \caption{data}
         \label{fig:sphere_data}

     \end{subfigure}
     \hfill
     \begin{subfigure}[b]{0.3\textwidth}
         \centering
         \includegraphics[width=\textwidth]{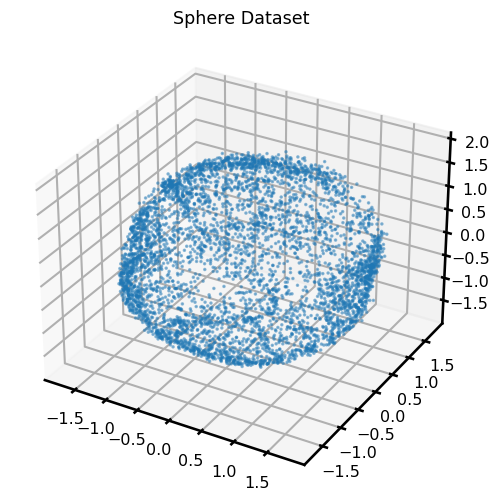}
         \caption{VAE}
         \label{fig:sphere_vae}

     \end{subfigure}
     \hfill
     \begin{subfigure}[b]{0.3\textwidth}
         \centering
         \includegraphics[width=\textwidth]{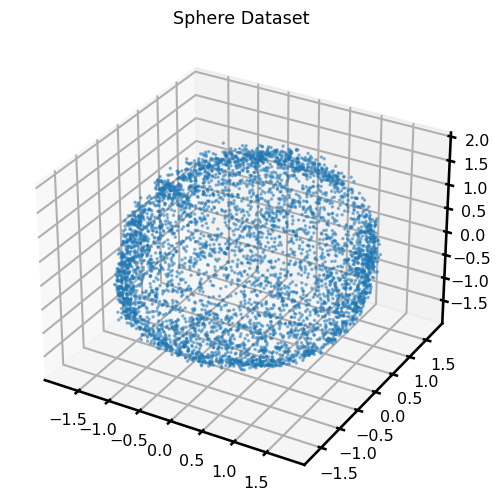}
         \caption{BVAE}
         \label{fig:sphere_bvae}
     \end{subfigure}
     \caption{Sphere data}
     \label{fig:sphere}
    \end{figure}
In our study, we have observed distinct  differences in the performance of the bagged Variational Autoencoder (BVAE) compared to the standard VAE across various data sets, especially those representing 1-dimensional manifolds. The findings can be summarized as follows:
\begin{itemize}
    \item General Observation for 1-D Manifolds in Figure \ref{fig:vae_std} and \ref{fig:vae_bag}: Across all the data sets representing 1-dimensional manifolds, the bagged VAE consistently demonstrates superior capability in reconstructing the generator. This suggests that the bagging technique enhances the VAE's ability to capture and replicate the underlying structure of the data more effectively than the standard VAE.
    \item Swiss Roll Dataset in Figure \ref{fig:swiss}: In the specific case of the Swiss roll data set, the bagged VAE again shows a notable improvement over the standard VAE. It demonstrates its ability to reconstruct the manifold structure with greater fidelity, indicating its enhanced modeling capability in this more complex data set.
    \item Sphere Dataset in Figure \ref{fig:sphere}: The sphere example presents a slightly different scenario. Here, the difference in performance between bagged VAE and standard VAE is not as pronounced. While the bagged VAE still performs marginally better in reconstructing the sphere, the improvement is more subtle then in the other cases.
\end{itemize}
These observations suggest that the bagging technique, when applied to a VAE, generally enhances the model's ability to accurately reconstruct various types of data structures, particularly in the context of 1-dimensional manifolds. The enhanced performance of the bagged VAE in most scenarios indicates its potential as a more robust and effective approach in the field of generative modeling. However, the slight improvement seen in the sphere data set also highlights that the effectiveness of bagging can vary depending on the specific characteristics of the data set being modeled.

\section{Real data analysis: Bagged VAE for the MNIST and Omniglot datasets}
%\subsection{Bagged VAE for MNIST dataset}
%\subsection{Bagged VAE for image data}

We first consider the well-known MNIST dataset \citep{lecun1998gradient}. MNIST consists of grayscale images of handwritten digits of size $28 \times 28$, with a training set of 60,000 images and a test set of 10,000 images. We randomly sample 10,000 images from the training set to form a validation set.

In our VAE architecture for images, we utilize convolutional Flipout layers \citep{wen2018flipout} as variational counterparts of standard convolutional layers, and transposed convolutional Flipout layers as variational analogs of transposed convolutional layers in the decoder.

\begin{figure}[htbp!]
    \centering
    \begin{subfigure}[b]{0.3\textwidth}
        \centering
        \includegraphics[width=\textwidth]{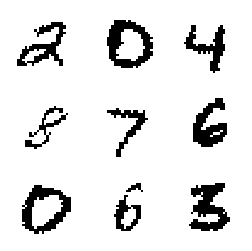}
        \caption{Original}
        \label{fig:minst_ori}
    \end{subfigure}
    \hfill
    \begin{subfigure}[b]{0.3\textwidth}
        \centering
        \includegraphics[width=\textwidth]{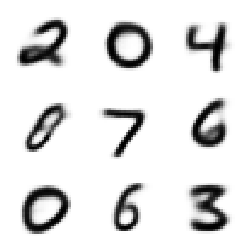}
        \caption{VAE}
        \label{fig:minst_vae}
    \end{subfigure}
    \hfill
    \begin{subfigure}[b]{0.3\textwidth}
        \centering
        \includegraphics[width=\textwidth]{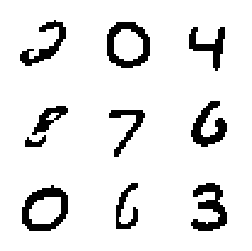}
        \caption{Bagged VAE (BVAE)}
        \label{fig:minst_bvae}
    \end{subfigure}
    \caption{MNIST: original images, standard VAE reconstructions, and bagged VAE (BVAE) reconstructions.}
    \label{fig:minst}
\end{figure}

We compare the standard VAE and the bagged VAE (BVAE) using the same protocol as in our earlier VAE simulation study. As illustrated in Figure~\ref{fig:minst}, the bagged VAE reconstructions appear visually cleaner and more representative of the underlying digit structure. In particular, the BVAE tends to denoise the images more effectively and emphasize the main strokes and shapes of each digit, while the standard VAE produces blurrier reconstructions.

Next, we consider the Omniglot dataset, which consists of handwritten character images from 50 different alphabets, each of size $28 \times 28$. The dataset contains 24,345 training samples and 8,070 test samples. As with MNIST, we split the training set into 20,000 images for training and 4,345 images for validation. We apply the same VAE and BVAE architectures and training procedures as in the MNIST experiment.

\begin{figure}[htbp!]
    \centering
    \begin{subfigure}[b]{0.3\textwidth}
        \centering
        \includegraphics[width=\textwidth]{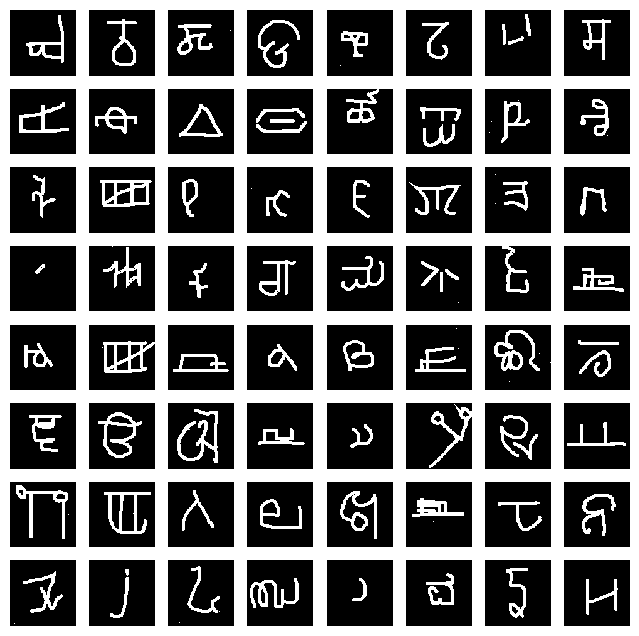}
        \caption{Original}
        \label{fig:Omniglot_ori}
    \end{subfigure}
    \hfill
    \begin{subfigure}[b]{0.3\textwidth}
        \centering
        \includegraphics[width=\textwidth]{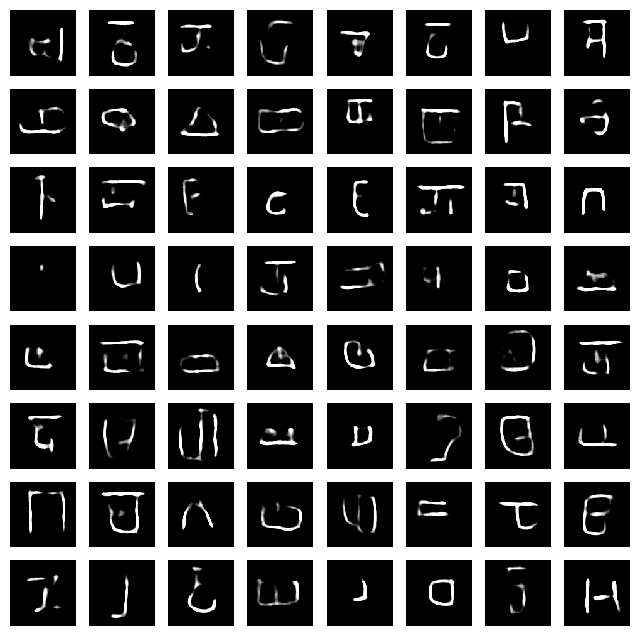}
        \caption{VAE}
        \label{fig:Omniglot_vae}
    \end{subfigure}
    \hfill
    \begin{subfigure}[b]{0.3\textwidth}
        \centering
        \includegraphics[width=\textwidth]{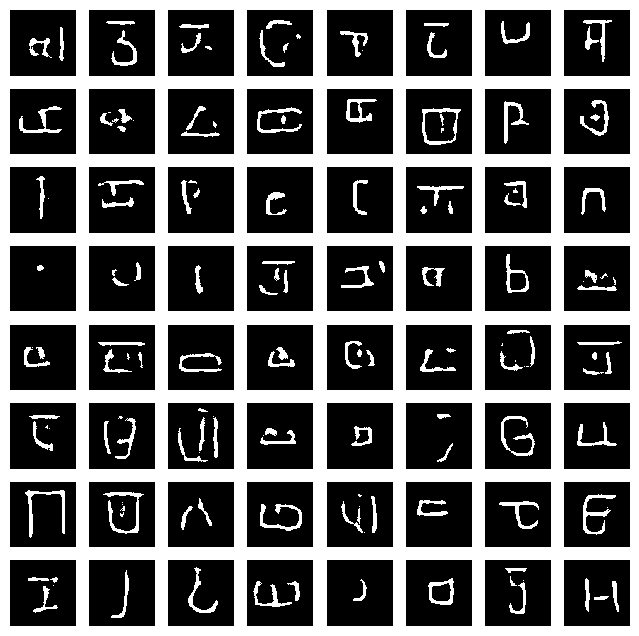}
        \caption{BVAE}
        \label{fig:Omniglot_bvae}
    \end{subfigure}
    \caption{Omniglot: original images, standard VAE reconstructions, and bagged VAE (BVAE) reconstructions.}
    \label{fig:Omniglot}
\end{figure}

Figure~\ref{fig:Omniglot} shows that the qualitative behavior observed on MNIST carries over to Omniglot. The bagged VAE reconstructions more faithfully preserve fine-grained structural details of the characters and better capture their distinctive features compared to the standard VAE. Overall, these experiments suggest that variational bagging can substantially enhance the quality and robustness of VAE-based generative modeling for image data, resulting in reconstructions that are both cleaner and more representative of the original inputs.

\appendix

\section{ Proofs}

\subsection{Proof of \cref{thm-vbbag1}}

Define $\mathbb{P}_n=n^{-1}\sum_{i=1}^n\delta_{X_i}$ and $\mathbb{G}_n = n^{1/2}(\mathbb{P}_n-P_0)$. Also, define their bootstrapped versions 
\begin{align*}
\mathbb{P}_{n}^{*}&=M^{-1} \sum_{i=1}^{n} K_{i} \delta_{X_{i}} \\
\mathbb{G}_{n}^{*}&=M^{1/2}(\mathbb{P}_{n}^{*}-\mathbb{P}_{n})
\end{align*}
where $(K_1,\dots, K_n) \sim \text{Multi}(M, 1/n)$. %\text{Dirichlet}(1/K,n)
Under the conditions 1 to 4, we can apply Lemma 19.31 of \cite{van2000asymptotic} with $\ell_\theta(x)=\log{p_{\textsc{vb}}(x|\theta)}$ to have
    \begin{align}
    \label{eq:first_order_expand}
        \mathbb{G}_n\left(\sqrt{n}(\ell_{\theta_0+h_n/\sqrt{n}}-\ell_{\theta_0})-h_n^\top\dot{\ell}_{\theta_0} \right)\stackrel{P_0}{\to}0
    \end{align}
for any sequence $h_n$ bounded in $P_0$-probability. Moreover, by Theorem 23.7 of \cite{van2000asymptotic}, conditionally given $X_1,X_2,\dots$, the bootstrap empirical process $\mathbb{G}_n^*$ and the empirical process $\mathbb{G}_n$ converge weakly to the same limiting random  process, thus we also have
    \begin{align*}
        \mathbb{G}_n^*\left(\sqrt{n}(\ell_{\theta_0+h_n/\sqrt{n}}-\ell_{\theta_0})-h_n^\top\dot{\ell}_{\theta_0} \right)\stackrel{P_0}{\to}0.
    \end{align*}
This implies that
    \begin{align*}
        (nM)^{1/2}\mathbb{P}_n^*(\ell_{\theta_0+h_n/\sqrt{n}}-&\ell_{\theta_0})
        -\sqrt{n}h_n^\top (\mathbb{P}_n^*-\mathbb{P}_n)\dot{\ell}_{\theta_0}\\
        &- n\mathbb{P}_n(\ell_{\theta_0+h_n/\sqrt{n}}-\ell_{\theta_0})\stackrel{P_0}{\to}0,
    \end{align*}
where, from \labelcref{eq:first_order_expand}, the third term of the above display satisfies
    \begin{align*}
        n\mathbb{P}_n(\ell_{\theta_0+h_n/\sqrt{n}}-&\ell_{\theta_0})
        -\sqrt{n}h_n^\top (\mathbb{P}_n-P_0)\dot{\ell}_{\theta_0}\\
        &- nP_0(\ell_{\theta_0+h_n/\sqrt{n}}-\ell_{\theta_0})\stackrel{P_0}{\to}0
    \end{align*}
Moreover, by the second-order Taylar expansion in the condition 3, we have
    \begin{align*}
         -P_0(\ell_{\theta_0+h_n/\sqrt{n}}-\ell_{\theta_0})-\frac{1}{2n}h_n^{\top} V_{\textsc{vb}}^0h_n = o(1).
    \end{align*}
Therefore, letting
    \begin{align*}
         \Delta_{n}&=    n^{1 / 2}(V_{\textsc{vb}}^0)^{-1}\left(\mathbb{P}_{n}-P_0\right) \dot{\ell}_{\theta_0}\\
        \Delta_{n}^{*}&=n^{1 / 2} (V_{\textsc{vb}}^0)^{-1}\left(\mathbb{P}_{n}^{*}-\mathbb{P}_{n}\right) \dot{\ell}_{\theta_0},
    \end{align*}
we have for every compact $K \subset \Theta$,
\begin{equation}
    \sup _{h \in K}\left|M \mathbb{P}_{n}^{*}(\ell_{\theta_0+h_n/\sqrt{n}}-\ell_{\theta_0})-h^{\top}\left(cV_{\textsc{vb}}^0\right)\left(\Delta_{n}+\Delta_{n}^{*}\right)-\frac{1}{2} h^{\top}\left(cV_{\textsc{vb}}^0\right) h\right| \stackrel{P_{+}}{\rightarrow} 0.
\end{equation}
Then we can apply the second result of Theorem 3 of \cite{wang2019variational} with $X^*_{1:M}$ in place of $X_{1:n}$, $ \mathbb{P}_{n}^{*}$ in place of $\mathbb{P}_{n}$, $cV_{\textsc{vb}}^0$ in place of $V_{\theta_0}$, and $\Delta_{n}+\Delta_{n}^{*}$ in place of $\Delta_{n,\theta_0}$, to obtain that
    \begin{align}
        \left\|\mathcal{L}(\sqrt{n}(\vartheta'-\theta_0)|X^*_{1:M})-N(\Delta_{n}+\Delta_{n}^{*},(c\tilde{V}_{\textsc{vb}}^0)^{-1})\right\|_{TV}\stackrel{P_{0}}{\rightarrow} 0
    \end{align}
for $\vartheta'\sim q(\theta|X^*_{1:M})$, where $\mathcal{L}(\sqrt{n}(\vartheta'-\theta_0)|X^*_{1:M})$ denotes the conditional raw of $\sqrt{n}(\vartheta'-\theta_0)$ given $X^*_{1:M}$. Hence, the characteristic function of $\sqrt{n}(\vartheta^\dag-\theta_0)-\Delta_{n}\mid X_{1: n}$  for $\vartheta^\dag\sim q^{\textup{bvB}}(\theta|X_{1:n})$ evaluated at $t \in \mathbb{R}^{d}$ can be written as
\begin{align}
\label{eq:chf_bvp}
E\left[\exp \left\{i (\Delta_{n}^{* })^\top t-\frac{1}{2c}t^{\top}(\tilde{V}_{\textsc{vb}}^0)^{-1} t \right\} \mid X
_{1: n}\right]+\epsilon_{n}(t)
\end{align}
for some function $\epsilon_{n}(t)$ such that $\limsup_{n\to\infty}\sup_t\epsilon_n(t)=0.$
%Then using the same argument used in the proof of Theorem 3.2. of \cite{huggins2019robust}, (\ref{eq:chf_bvp})
We can expand \labelcref{eq:chf_bvp} as
\begin{align*}
    &E \left[ \exp \left\{ i n^{1/2}\mathbb{P}_{n}^{*}\dot{\ell}_{\theta_0}(V^0_{\textsc{vb}})^{-1}t  \right\}| X_{1: n}\right] \exp \left\{ -i n^{1/2}\mathbb{P}_{n}\dot{\ell}_{\theta_0}(V^0_{\textsc{vb}})^{-1}t   \right\}\\
    & \times \exp\left\{-t^\top(\tilde{V}^0_{\textsc{vb}})^{-1}t/2c\right\}+\epsilon_n(t)
\end{align*}
The first line can be written as:
\begin{align*}
      &E \left[ \exp \left\{ i n^{1/2} M^{-1}
      \sum_{j=1}^n K_j \dot{\ell}_{\theta_0}(X_j)^\top(V^0_{\textsc{vb}})^{-1}t  \right\}|X_{1: n}\right]\\
      &\times\exp \left\{ -i n^{1/2}\mathbb{P}_{n}\dot{\ell}_{\theta_0}(V^0_{\textsc{vb}})^{-1}t   \right\}\\
    &=\left[\frac{1}{n} \sum_{j=1}^n \exp\left\{ \frac{i n^{1/2} \dot{\ell}_{\theta_0}(X_j)^\top(V^0_{\textsc{vb}})^{-1}t}{M}\right\} \right]^{M} \exp \left\{ -i n^{1/2}\mathbb{P}_{n}\dot{\ell}_{\theta_0}(V^0_{\textsc{vb}})^{-1}t   \right\}\\
    &=   \left[\frac{1}{n} \sum_{j=1}^n \exp\left\{ \frac{i n^{1/2} \delta\dot{\ell}_{\theta_0}(X_j)^\top(V^0_{\textsc{vb}})^{-1}t}{M}\right\} \right]^{M}
\end{align*}
where we let $\delta \dot{\ell}_{\theta_0}(X_j) =\dot{\ell}_{\theta_0}(X_j) - \mathbb{P}_{n}\dot{\ell}_{\theta_0}$. By the second-order Taylor expansion, the last display can be further written as
    \begin{align*}
          \left[1-\frac{n \mathbb{P}_n[ (\delta\dot{\ell}_{\theta_0})( \delta\dot{\ell}_{\theta_0})^\top](V^0_{\textsc{vb}})^{-1}t}{2M^2}+R_n \right]^{M},
    \end{align*}
where $R_n=O_{P}(1/M^3)$ denotes the remainder term. Then since $$\mathbb{P}_n[ (\delta\dot{\ell}_{\theta_0})( \delta\dot{\ell}_{\theta_0})^\top]\to D_{\textsc{vb}}^0$$ almost surely and $M/n\to c$ by assumption, the last display converges to 
\begin{equation}
    \exp \left\{-\frac{1}{2c}t^{\top}(\tilde{V}_{\textsc{vb}}^0)^{-1} t
    -\frac{1}{2c}t^{\top}(V_{\textsc{vb}}^0)^{-1} D_{\textsc{vb}}^0(V_{\textsc{vb}}^0)^{-1} t \right\}%+\bar\epsilon_{n}(t)
\end{equation}
%for some function $\bar\epsilon_{n}(t)$ such that $\limsup_{n\to\infty}\sup_t\bar\epsilon_n(t)=0.$
This implies the desired
    \begin{align*}
       \sqrt{n}(\vartheta^{\dag}-\theta_0) -\Delta_n\mid X_{1: n}
        \stackrel{d}{\to}N(0,(\tilde{V}_{\textsc{vb}}^0)^{-1}/c+(V_{\textsc{vb}}^0)^{-1} D_{\textsc{vb}}^0(V_{\textsc{vb}}^0)^{-1}/c)
    \end{align*}
by Levy’s continuity theorem.

\subsection{Proof of \cref{thm:confidence}}

Since $\hat{\theta}_{\textup{mle}}\to\theta_0$ and $\Delta_n\to0$ in $P_0$-prabability as well as $\widehat{\Sigma}\to \Sigma_0$ in $P_0$-prabability by assumption, by \cref{thm:bvm_no_latent}, we have 
    \begin{align*}
      P_{N(0,\Sigma_0) }( C(r_{n, 1-\alpha})) &= Q^{\textup{bvB}}(\vartheta^\dag\in C(r_{n, 1-\alpha})) + o_{P_0}(1)\\
      &=1-\alpha+ o_{P_0}(1),
    \end{align*}
where we denote by $ P_{N(0,\Sigma) }$ the probability measure under the normal distribution $N(0, \Sigma)$. Therefore by the continuous mapping theorem, $r_{n,1-\alpha}^2\to \chi^2_{d, 1-\alpha}$ in $P_0$-probability, where $ \chi^2_{d, 1-\alpha}$ denotes the $1-\alpha$ quntitle of the $\chi^2(d)$ distribution and $d$ denotes the dimension of the parameter $\theta$.  We use this fact to get, letting $S_0=(V^0)^{-1}D^0(V^0)^{-1}$ for notational simplicity,
    \begin{align*}
       P_0(\theta_0\in C(r_{n,1-\alpha}))
     &  =P_0( \sqrt{n}(\hat{\theta}_{\textup{mle}}-\theta_0)\in \{u: u^\top\Sigma_0^{-1}u\le r_{n, 1-\alpha}^2\})+o(1)\\
      & =P_0(\sqrt{n}(\hat{\theta}_{\textup{mle}}-\theta_0)\in \{u: u^\top\Sigma_0^{-1}u\le \chi^2_{d, 1-\alpha}\})+o(1)\\
      &=P_{U\sim N(0, S_0)}(U^\top\Sigma_0^{-1}U\le \chi^2_{d, 1-\alpha})+o(1)
    \end{align*}
where the last equality holds due to
    \begin{align*}
       \sqrt{n}(\hat{\theta}_{\textup{mle}}-\theta_0)
       \overset{d}{\rightarrow} N(0,S_0).
    \end{align*}
Therefore, since $\Sigma_0- S_0=(\tilde{V}^0)^{-1}$ is non-negative definite, so is $S_0^{-1}-\Sigma_0^{-1}$, we have 
    \begin{align*}
       & P_{U\sim N(0, S_0)}( U^\top\Sigma_0^{-1}U\le \chi^2_{d, 1-\alpha})\\
        &\ge P_{U\sim N(0, S_0)}(U^\top S_0^{-1}U\le \chi^2_{d, 1-\alpha})=1-\alpha,
    \end{align*}
which completes the proof.

\subsection{Proof of \cref{thm:contraction}}

Before giving the proof, we introduce additional notations. \\Let $p_{\theta, K_{1:n}}(X_{1:n})=\prod_{i=1}^n p(X_i|\theta)^{K_i}$ and $p_{0, K_{1:n}}(X_{1:n})=\prod_{i=1}^n p(X_i|\theta)^{K_i}$ be denote the density of the bootstrapped sample $X_{1:n}^*$ with the ``bootstrap weight'' $K_{1:n}=(K_1,\dots, K_n)\sim\text{Multi}(n, 1/n)$, under $P_\theta$ and $P_0$, respectively. Then the posterior given the bootstrapped sample can be written as $\pi(\theta |X_{1:n}^*)=p_{\theta, K_{1:n}}(X_{1:n})\pi(\theta)/p_{\Pi, K_{1:n}}(X_{1:n})$ with $p_{\Pi, K_{1:n}}(X_{1:n})=\int\prod_{i=1}^n p(X_i|\theta)^{K_i}\pi(\theta) d\theta$. In what follows, the probability measure $P$ and the expectation operator $E$ are taken on the randomness of both the sample $X_{1:n}$ and the bootstrap weights $K_{1:n}$.

Let $Q^*$ be the variational posterior obtained with a bootstrapped sample $X_{1:n}^*$. The proof is done if we show that the contraction rate of $Q^*$ is given by $\epsilon_n\sqrt{\log n}$. We start with observing that, by Donsker and Varadhan’s variational inequality (e.g., Lemma J.1 of \cite{ohn2024adaptive}), 
    \begin{align}
    \label{eq:dv_ineq}
        Q(A)\le \frac{1}{t}\{\textup{KL}(Q,\Pi(\cdot|X_{1:n}^*)) +e^{t} \Pi(A|X_{1:n}^*) \}
    \end{align}
for any distribution $Q$ on $\Theta$, any event $A\subset \Theta$ and any positive number $t$. We apply the above display to $Q=Q^*$ and $A=A_n=\{\theta\in \Theta:H^2(P_{\theta},P_{0})\ge \eta_n^2\}$ with $\eta_n^2=M_n\epsilon_n^2\log n$.

Next, we define the event
    \begin{align*}
        G_n=\left\{\int\frac{p_{\theta, K_{1:n}}(X_{1:n})}{p_{0, K_{1:n}}(X_{1:n})}\pi(\theta) d\theta\ge \exp(-(C_1+2)n\epsilon_n^2)\right\}.
        \end{align*}
%for $\Theta^\ddag=\{\theta\in \Theta:\textup{KL}( P_0, P_\theta)\le \epsilon_n^2,\textup{KLV}( P_0, P_\theta)\le \epsilon_n^2\}$
We have
    \begin{align*}
       E\left[\log\frac{p_{\theta, K_{1:n}}(X_{1:n})}{p_{0, K_{1:n}}(X_{1:n})}\right]=n\textup{KL}( P_0, P_\theta)
    \end{align*}
and
    \begin{align*}
        Var\left(\log\frac{p_{\theta, K_{1:n}}(X_{1:n})}{p_{0, K_{1:n}}(X_{1:n})}\right)
        &\le\sum_{i=1}^nE\left[\left(K_i\log \frac{p(X_i|\theta)}{p_0(X_i)}\right)^2\right]\\
        &=\sum_{i=1}^nE(K_i^2) \text{KLV}(P_0, P_\theta)\\
        &\le 2n\text{KLV}(P_0, P_\theta),
    \end{align*}
where the equality follows from the independence of $K_i$ and $X_i$. Hence, by a standard argument for bounding the probability of $G_n$ under the prior mass condition (e.g., Lemma 8.10 of \cite{ghosal2017fundamentals}), we have  $P(G_n^c)\le 2/(n\epsilon_n^2)\to0$.  Therefore, it suffices to bound the quantity $E(Q^*(A_n)\mathbb{I}(G_n))$, which, from the inequality in \labelcref{eq:dv_ineq}, is further bounded by
    \begin{align}
   & E(Q^*(A_n)\mathbb{I}(G_n))\nonumber\\
        &\le  \frac{1}{t}\{E[\textup{KL}(Q^*,\Pi(\cdot|X_{1:n}^*))\mathbb{I}(G_n)] 
        +e^{t}E[ \Pi(A_n|X_{1:n}^*)\mathbb{I}(G_n)] \}\label{eq:vi_ineq}.
    \end{align}
For the first term in  \labelcref{eq:vi_ineq}, we have
    \begin{align*}
    &E[\textup{KL}(Q^*,\Pi(\cdot|X_{1:n}^*))\mathbb{I}(G_n)] 
    \le E[  \textup{KL}(Q^*,\Pi(\cdot|X_{1:n}^*) ]\\
      &= E\left[ \int \left\{ \log \frac{q^*(\theta)}{\pi(\theta)}+\log\frac{p_{0, K_{1:n}}(X_{1:n})}{p_{\theta, K_{1:n}}(X_{1:n})}\right\}dQ^*(\theta) +\log\frac{p_{\Pi, K_{1:n}}(X_{1:n})}{p_{0, K_{1:n}}(X_{1:n})}\right] 
      \\
      &\le  E\left[ \int \left\{ \log \frac{q(\theta)}{\pi(\theta)}+\log\frac{p_{0, K_{1:n}}(X_{1:n})}{p_{\theta, K_{1:n}}(X_{1:n})}\right\}dQ(\theta) +\log\frac{p_{\Pi, K_{1:n}}(X_{1:n})}{p_{0, K_{1:n}}(X_{1:n})}\right] \\
      &=\textup{KL}(Q, \Pi)+n \int \textup{KL}(P_\theta,P_0)d Q(\theta)+  E\left[ \log\frac{p_{\Pi, K_{1:n}}(X_{1:n})}{p_{0, K_{1:n}}(X_{1:n})}\right] 
    \end{align*}
for any $Q\in \mathcal{Q}$, where we use the optimization optimality of $Q^*$ in the third line and use the assumption   $\sum_{i=1}^n K_i=n$ in the last line. Moreover, by Jensen's inequality together with that $\sum_{i=1}^n K_i=n$, we have
    \begin{align*}
         E\left[ \log\frac{p_{\Pi, K_{1:n}}(X_{1:n})}{p_{0, K_{1:n}}(X_{1:n})}\right] 
        & = E\left[ \log\frac{p_{\Pi, K_{1:n}}(X_{1:n})}{\prod_{i=1}^np_{0}(X_{i})}+\log\frac{\prod_{i=1}^np_{0}(X_{i})}{p_{0, K_{1:n}}(X_{1:n})}\right] \\
         &\le E\left[\log\frac{\prod_{i=1}^np_{0}(X_{i})}{p_{0, K_{1:n}}(X_{1:n})}\right]\\
         &= E\left[\sum_{i=1}^n(1-K_i)\log p_0(X_i)\right]=0.
    \end{align*}
Therefore, by our variational family assumption, 
    \begin{align}
        E[  \textup{KL}(Q^*,\Pi(\cdot|X_{1:n}^*) ]\le C_1' n\epsilon_n^2
    \end{align}
for some constant $C_1'>0$. 

Now we focus on the second term in  \labelcref{eq:vi_ineq}. We can bound it as
    \begin{align*}
        \Pi(A_n|X_{1:n}^*)\mathbb{I}(G_n)
        \le \Pi(A_n\cap \Theta_n|X_{1:n}^*)\mathbb{I}(G_n)+ \Pi( \Theta_n^c|X_{1:n}^*)\mathbb{I}(G_n).
    \end{align*}
The expectation of the second term in the above display satisfies
    \begin{align*}
        E[\Pi( \Theta_n^c|X_{1:n}^*)\mathbb{I}(G_n)]
        \le e^{(2+C_1)n\epsilon_n^2} \Pi(\Theta_n^c)\le e^{-2n\epsilon_n^2}\to0
    \end{align*}
by the second sieve assumption. On the other hand, for the first term, we need the following technical result on the bootstrap weight. Since $K_i\sim \text{Binom}(n, 1/n)$, by Markov's inequality
    \begin{align*}
        P(K_i> 2\log n)&\le e^{-2\log n}E(\exp(K_i))\\
        &=e^{-2\log n}(1-n^{-1} + n^{-1}e)^n\\
        &\le e^{-2\log n}e^{e-1}=e^{e-1}/n^2,
    \end{align*}
where the last inequality follows from the inequality $1+x\le e^{x}$ for any $x\in \mathbb{R}$. Thus, by the union bound, we have
\begin{align}
    \label{eq:bweight_bound}
        P(\max_{1\le i\le n}K_i>2 \log n)\le\frac{e^{e-1}}{n}
    \end{align}
which tends to zero as $n\to\infty$. Then, following the proof strategy of  \cite{han2019statistical}, we define
    \begin{align*}
        U_\theta(X)=\max\left\{\log \frac{p(X|\theta)}{p_0(X)},-\tau\right\}
    \end{align*}
for $\tau>0$.
Then we consider the weighted likelihood ratio empirical process defined as
    \begin{align*}
        \nu_n(\theta)=\frac{1}{\sqrt{n}}\sum_{i=1}^n [K_i U_\theta(X_i)-E(K_iU_\theta(X_i))]
    \end{align*}
Then by Lemma 12 of \cite{han2019statistical} together with the fact given in \labelcref{eq:bweight_bound} and our complexity assumption, we have
    \begin{align}
    \label{eq:bern_ineq}
      P\left(  \sup_{\theta\in \Theta_n: H(P_\theta, P_0)\le \sqrt{2}\epsilon}\nu_n(\theta)\ge \frac{k}{2}\sqrt{n}\epsilon^2\right)
\le 3\exp\left(-C_2'\frac{n\epsilon^2}{1+2\log n}\right)
    \end{align}
for any $\epsilon>\epsilon_n$ for some constants $C_2'>0$ and $k\in(1/2,1)$. By Lemma 4 of \cite{wong1995probability}, $E(\nu_n(\theta))\le -(1-\delta_0) H^2(P_\theta, P_0)$ with $\delta=2\exp(-\tau/2)/(1-\exp(-\tau/2))^2$. Let $\Theta_n(\epsilon;\sqrt{2}\epsilon)=\{\theta\in \Theta_n:\epsilon \le H(P_\theta, P_0)\le \sqrt{2}\epsilon \}$. Then we have
    \begin{align*}
       B(\epsilon;\sqrt{2}\epsilon) &=\left\{\sup_{\theta\in \Theta_n(\epsilon;\sqrt{2}\epsilon)}\frac{p_{\theta, K_{1:n}}(X_{1:n})}{p_{0, K_{1:n}}(X_{1:n})}\ge \exp(-n\epsilon^2(1-\delta_0-k/2))\right\}\\
       & \subset\left\{\sup_{\theta\in\Theta_n(\epsilon;\sqrt{2}\epsilon)}\nu_n(\theta)\ge \frac{k}{2}\sqrt{n}\epsilon^2\right\}
    \end{align*}
Hence, by \labelcref{eq:bern_ineq}, the probability of the event $B(\epsilon;\sqrt{2}\epsilon) $ is bounded above by $3\exp\left(-C_1'\frac{n\epsilon^2}{1+2\log n}\right).$ Let $J$ be the smallest integer such that $2^J\eta_n^2\ge 4$. Then by using a peeling technique, we have
    \begin{align*}
        &P\left(\sup_{\theta\in\Theta_n:H(P_\theta, P_0)\ge\eta_n }\frac{p_{\theta, K_{1:n}}(X_{1:n})}{p_{0, K_{1:n}}(X_{1:n})}\ge \exp(-n\eta_n^2(1-\delta_0-k/2)\right)\\
        &\le \sum_{j=0}^J P\left( B(\sqrt{2^{j}}\eta_n;\sqrt{2^{j+1}}\eta_n) \right)
        \le \sum_{j=0}^J 3\exp\left(-C_2'\frac{n2^j\eta_n^2}{1+2\log n}\right)\\
        &\le 4\exp\left(-C_2'\frac{n\eta_n^2}{1+2\log n}\right)
        \le 4\exp\left(-\frac{1}{3}C_2'M_n n\epsilon_n^2\right).
    \end{align*}
Therefore,  $E[ \Pi(A_n\cap \Theta_n|X_{1:n}^*)\mathbb{I}(G_n)]\le \exp(-C_3'n\eta_n^2)$ for some constant $C_3'>0$. Combining these derivations together, we have that by taking $t=C_4'n\eta_n^2$ with $C_4'$ being a positive constant less than $C_3'$, the term \labelcref{eq:vi_ineq} converges to zero, which proves that the variational posterior $Q^*$ with a single bootstrapped sample contracts to $P_0$ at a rate $\eta_n$.

\bibliographystyle{plainnat}
\bibliography{references, _references-LL_avb}

 \addcontentsline{toc}{section}{References}
\end{document}